\newtheorem{thm}{Theorem}[section]
\newtheorem{lem}[thm]{Lemma}
\newtheorem{prop}[thm]{Proposition}
\newtheorem{fact}[thm]{Fact}
\newtheorem{cor}[thm]{Corollary}
\newtheorem{defn}[thm]{Definition}
\theoremstyle{remark}
\newtheorem{rem}[thm]{Remark}
\newtheorem{exam}[thm]{Example}
\def \N {\mathbb N}
\def \T {\mathcal T}
\def \TT {\mathsf T}
\def \TTT {\boldsymbol{\TT}}
\def \B {\mathcal B}
\def \Z {\mathbb Z}
\def \F {\mathcal F}
\def \P {\mathcal P}
\def \S {\mathcal S}
\def \htop {h_{\mathsf{top}}}
\def \eps {\varepsilon}
\def \sq {sequence}
\def \tl {topological}
\def \im {invariant measure}
\def \inv {invariant}
\def \htop{h_{\mathsf{top}}}
\numberwithin{equation}{section}
\begin{document}

\title{Universality of $G$-subshifts with specification}

\author{Tomasz Downarowicz, Benjamin Weiss, Mateusz Wi\c ecek, Guohua Zhang}

\address{\vskip 2pt \hskip -12pt Tomasz Downarowicz}

\address{\hskip -12pt Faculty of Pure and Applied Mathematics, Wroc\l aw University of Technology, Wroc\l aw, Poland}

\email{downar@pwr.edu.pl}
\medskip
\address{\vskip 2pt \hskip -12pt Benjamin Weiss}

\address{\hskip -12pt Einstein Institute of Mathematics,
The Hebrew University of Jerusalem}

\email{weiss@math.huji.ac.il}
\address{\vskip 2pt \hskip -12pt Mateusz Wi\c ecek}

\address{\hskip -12pt Faculty of Pure and Applied Mathematics, Wroc\l aw University of Technology, Wroc\l aw, Poland}

\email{mateusz.wiecek@pwr.edu.pl}
\medskip
\address{\vskip 2pt \hskip -12pt Guohua Zhang}

\address{\hskip -12pt School of Mathematical Sciences and Shanghai Center for Mathematical Sciences, Fudan University, Shanghai 200433, China}

\email{chiaths.zhang@gmail.com}

\subjclass[2020]{Primary 37E20, 37A35; Secondary 43A07, 37B40}
\keywords{amenable group action, specification property, universal system}
\begin{abstract}Let $G$ be an infinite countable amenable group and let $(X,G)$ be a $G$-subshift with specification, containing a free element. We prove that $(X,G)$ is universal, i.e., has positive \tl\ entropy and for any free ergodic $G$-action on a standard probability space, $(Y,\nu,G)$, with $h(\nu)<\htop(X)$, there exists a shift-\im\ $\mu$ on $X$ such that the systems $(Y,\nu,G)$ and $(X,\mu,G)$ are isomorphic. In particular, any $K$-shift (consisting of the indicator functions of all maximal $K$-separated sets) containing a free element is universal.
\end{abstract}

    \thanks{The research of Guohua Zhang is supported by the National Key Research and Development Program of China Grant No. 2021YFA1003204, and the New Cornerstone Science Foundation through the New Cornerstone Investigator Program. The research of Mateusz Wi\c ecek is supported by the NCN Grant No. 2021/41/N/ST1/02816. The research of Tomasz Downarowicz is supported by the NCN Grant No. 2022/47/B/ST1/02866.}

\maketitle

\section{Motivation, preliminaries and statement of the main result}
This paper is concerned with properties of specific symbolic $G$-systems, where $G$ is an infinite countable amenable group. In order to describe the goal of this paper, we need to establish some elementary terminology. Throughout the paper, $G$ always denotes an infinite countable group with unity denoted by $e$. Most of the time, we assume that $G$ is amenable, although this does not apply to the following two subsections.
\subsection{Symbolic systems}
%Definicja ergodic G-system
\begin{defn}\label{shift}
Let $\Lambda$ be a compact countable set containing at least two elements, called \emph{alphabet}. The $G$-shift is the dynamical system $(\Lambda^G,G)$ where $G$ acts on
$\Lambda^G$ as follows: if $x=(x(g))_{g\in G}\in\Lambda^G$ and $h\in G$ then
$$
h(x)=(x(gh))_{g\in G}.
$$
Any subsystem $(X,G)$ of the $G$-shift (i.e., $X$ is a nonempty, closed in the product topology and shift-\inv\ subset of $\Lambda^G$) is called a \emph{$G$-subshift} or a \emph{symbolic $G$-system}. When the group $G$ is fixed, we will often skip the prefix ``$G$-'' and say ``shift'', ``subshift'' or ``symbolic system''.
\end{defn}

Unless otherwise specified, the alphabets in the considered subshifts will be assumed to be finite.
\smallskip

Elements of $\Lambda^G$ will be called (\emph{symbolic}) \emph{elements}. If $x=(x(g))_{g\in G}$ is a symbolic element and $A\subset G$ then the restriction of $x$ to $A$ will be denoted by $x|_A$. By a \emph{pattern over $A$} we will mean any function $\alpha:A\to\Lambda$. If $A$ is finite and contains $e$ then any pattern over $A$ will be called a \emph{block} and $e$ will be called the \emph{center} of the block. In what follows, we will write $\alpha\approx\alpha'$ for two patterns $\alpha,\alpha'$ with domains $A$ and $A'$, respectively, if they are ``equal up to a shift'', that is, there exists some $g\in G$ such that $A'=Ag$ and for every $a\in A$ we have $\alpha'(ag)=\alpha(a)$. %We will then also write $\alpha'=g(\alpha)$. 
Let $x=(x(g))_{g\in G}\in\Lambda^G$ be a symbolic element, let $A\subset G$ and let $\alpha:A\to\Lambda$ be a pattern over $A$. We will say that $\alpha$ \emph{occurs} in $x$ if there exists $g\in G$ such that $x|_{Ag}\approx\alpha$. If $\alpha$ is a block and we need to be more precise, we will say that $\alpha$ occurs in $x$ \emph{centered at} $g$ and $g$ will be referred to as \emph{center} of this occurrence. If $(X,G)$ is a subshift and $\alpha$ is a pattern over some set $A\subset G$ then we will say that this pattern is $X$-admissible if $\alpha$ occurs in some $x\in X$.

\subsection{$K$-shifts}
Let $K$ be a finite subset of $G$, containing the unity $e$ and at least one more element. 
\begin{defn}\label{ksep}\phantom{a}
\begin{itemize}
	\item A set $V\subset G$ is \emph{$K$-separated} if the sets $Kg$ are disjoint as $g$ ranges~over~$V$. 
	\item A $K$-separated set $V$ is \emph{maximal} if $V\cup\{g\}$ is not $K$-separated for any $g\notin V$.
\end{itemize}
\end{defn}

Note the obvious fact that every $K$-separated set is contained in some maximal $K$-separated set. It is not hard to see that a $K$-separated set $V$ is maximal if and only if $K^{-1}KV=G$. In other words, a set $V\subset G$ is maximal $K$-separated if and only if it has the following two properties:
\begin{enumerate}[(I)]
	\item every set of the form $K^{-1}Kg$ ($g\in G$) contains at least one element of $V$,
	\item if $g\in V$ then $K^{-1}Kg$ contains no other elements of $V$.
\end{enumerate}

\begin{defn}\label{kshift}
The \emph{$K$-shift} is defined as the collection $\Omega_K\subset\{0,1\}^G$ 
of the indicator functions of all maximal $K$-separated subsets of $G$.
Clearly, $\Omega_K$ is closed and shift-\inv, i.e., it is a subshift on two symbols. 
\end{defn}

\subsection{Amenability, freeness and entropy}
\begin{defn}\label{amen}
Given a finite set $K\subset G$ and $\eps>0$, we say that a finite set $F\subset G$ is \emph{$(K,\eps)$-\inv} if 
$$
\frac{|KF\triangle F|}{|F|}<\eps
$$
($\cdot\triangle\cdot$ stands for the symmetric difference of sets and $|\cdot|$ denotes cardinality).
A \sq\ $\F=(F_n)_{n\in\N}$ of finite subsets of $G$ is called a \emph{F\o lner \sq} if for every finite set $K$ and every $\eps>0$ the sets $F_n$ are eventually $(K,\eps)$-\inv.
A group $G$ is \emph{amenable} if it admits a F\o lner \sq.
\end{defn}
Note that if $\F=(F_n)_{n\in\N}$ is a F\o lner \sq\ in $G$ then $\lim_{n\to\infty}|F_n|=\infty$. 

We take this opportunity to introduce some useful terminology concerning relations between finite subsets of countable groups.

\begin{defn}\phantom{.}
\begin{itemize}
	\item Let $K$ and $F$ be finite subsets of $G$. The \emph{$K$-core} of $F$ is the set
$$
F_K=\{f\in F: Kf\subset F\}.
$$
	\item Fix an $\eps>0$. A set $F'\subset F$ is called a \emph{$(1-\eps)$-subset} of $F$ if $\frac{|F'|}{|F|}\ge1-\eps$. A set $F''\supset F$ is called an \emph{$\eps$-enlargement} of $F$ if $\frac{|F''|}{|F|}\le1+\eps$.
\end{itemize}
\end{defn}

The following easy facts hold for finite subsets of $G$.

\begin{prop}\label{ksets}Assume that $F$ is $(K,\eps)$-\inv, where $K\ni e$. Then:
\begin{enumerate}
	\item For any $g\in K$, $F$ is $(\{g\},2\eps)$-\inv.
	\item $F_K$ is a $(1-|K|\eps)$-subset of $F$.
	\item If $F'$ is a $(1-\eps)$-subset of $F$ then $F'_K$ is a $(1-2|K|\eps)$-subset of $F$.
\end{enumerate}
\end{prop}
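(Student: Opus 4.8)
The plan is to obtain all three items by elementary counting, using two facts repeatedly: left translation $f\mapsto kf$ is a bijection of $G$, so it preserves the cardinality of any finite set, and, since $e\in K$, one has $F\subseteq KF$, hence $KF\triangle F=KF\setminus F$ and therefore $|kF\setminus F|\le|KF\setminus F|<\eps|F|$ for every $k\in K$ (the left-hand side being $0$ when $k=e$).

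For item~(1), fix $g\in K$ and write $gF\triangle F=(gF\setminus F)\cup(F\setminus gF)$: the first set has cardinality $<\eps|F|$ by the observation above, and the second has the same cardinality as the first since $|gF|=|F|$ forces $|F\setminus gF|=|gF\setminus F|$, so adding gives $|gF\triangle F|<2\eps|F|$. For item~(2), $F\setminus F_K$ consists precisely of those $f\in F$ with $kf\notin F$ for some $k\in K$, and each such $f$ lies in $k^{-1}(kF\setminus F)$; since $k=e$ never qualifies, $F\setminus F_K\subseteq\bigcup_{k\in K\setminus\{e\}}k^{-1}(kF\setminus F)$, whence $|F\setminus F_K|<(|K|-1)\eps|F|\le|K|\eps|F|$, which is the claim.

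For item~(3), I would first note that $F'_K\subseteq F_K$ and then bound the difference: if $f\in F_K\setminus F'_K$ then either $f\in F\setminus F'$, or $f\in F'$ together with some $k\in K\setminus\{e\}$ satisfying $kf\in Kf\subseteq F$ but $kf\notin F'$, so that $f\in k^{-1}(F\setminus F')$; hence $|F_K\setminus F'_K|\le|K|\,|F\setminus F'|\le|K|\eps|F|$. Combining this with the (slightly sharpened) estimate from item~(2) yields $|F'_K|=|F_K|-|F_K\setminus F'_K|>(1-(|K|-1)\eps)|F|-|K|\eps|F|=(1-(2|K|-1)\eps)|F|\ge(1-2|K|\eps)|F|$. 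I do not expect any genuine obstacle here; the only point demanding care is keeping the set inclusions exact and noticing that the $k=e$ term drops out of the unions in items~(2) and~(3), which is exactly what makes the stated constants (in fact, marginally better ones) come out.
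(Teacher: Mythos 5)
Your argument is correct; in fact the paper leaves Proposition~\ref{ksets} unproved (it is introduced as ``easy facts''), so there is no proof in the paper to compare against, and yours would serve as a fully adequate one. The three ingredients you use — that $F\subseteq KF$ forces $KF\triangle F=KF\setminus F$, that left translation preserves cardinality so $|F\setminus gF|=|gF\setminus F|$, and that the offending translators $k$ in items (2) and (3) can never be $e$ — are exactly the right ones, and the resulting constants $(|K|-1)\eps$ and $(2|K|-1)\eps$ are indeed slightly sharper than the stated $|K|\eps$ and $2|K|\eps$.
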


We will also need the following auxiliary fact:

\begin{prop}\label{aux}
Fix a $\delta>0$ and let $K,K'$ be finite subsets of $G$ such that $e\in K\subset K'$ and $K'$ is symmetric. If a finite set $F\subset G$ is $(K',\delta)$-\inv\ then, for any $K'$-separated set $V$, we have
$$
\frac{|KV\cap F|}{|F|}\le|K'|^2\delta+\frac{|K|}{|K'|}.
$$
\end{prop}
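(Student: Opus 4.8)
The plan is to decompose $KV\cap F$ according to the points $v\in V$, and, among these, according to how $K'v$ sits relative to $F$. Since $V$ is $K'$-separated and $e\in K\subseteq K'$, the translates $Kv$ ($v\in V$) are pairwise disjoint (each is contained in the corresponding $K'v$, and the sets $K'v$ are disjoint). Hence
$$
|KV\cap F|=\sum_{v\in V,\ Kv\cap F\neq\emptyset}|Kv\cap F|,
$$
and I would split the summation set into the \emph{interior} indices $V_1=\{v\in V: K'v\subseteq F\}$ and the \emph{boundary} indices $V_0=\{v\in V: Kv\cap F\neq\emptyset,\ K'v\not\subseteq F\}$; every $v$ with $Kv\cap F\neq\emptyset$ belongs to exactly one of them, and the sets $Kv\cap F$ remain pairwise disjoint within each group.

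For $v\in V_1$ one has $Kv\subseteq K'v\subseteq F$, so $|Kv\cap F|=|K|$; and the sets $K'v$ ($v\in V_1$) are pairwise disjoint subsets of $F$, so $|V_1|\le|F|/|K'|$. This yields $\sum_{v\in V_1}|Kv\cap F|\le\frac{|K|}{|K'|}|F|$, the second term of the asserted bound.

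For the boundary part I would show $\bigcup_{v\in V_0}(Kv\cap F)\subseteq F\setminus F_{K'K'}$. Let $h=kv\in F$ with $k\in K$ and $v\in V_0$; since $K'v\not\subseteq F$, pick $k'\in K'$ with $k'v\notin F$. Then $k'v=(k'k^{-1})h$ with $k'k^{-1}\in K'(K')^{-1}=K'K'$ (using $K\subseteq K'$ and the symmetry of $K'$), so $(K'K')h\not\subseteq F$, i.e.\ $h\notin F_{K'K'}$. As the sets $Kv\cap F$ ($v\in V_0$) are disjoint, $\sum_{v\in V_0}|Kv\cap F|\le|F\setminus F_{K'K'}|$.

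It remains to estimate $|F\setminus F_{K'K'}|$, and this is the only point that calls for a little care: iterating the core operation and quoting Proposition~\ref{ksets}(3) twice would cost a spurious factor $2$. Instead I would check the inclusion $F\setminus F_{K'K'}\subseteq K'(F\setminus F_{K'})$ directly: if $h\in F$ and $k_1k_2h\notin F$ with $k_1,k_2\in K'$, then either $k_2h\notin F$, so $h\in F\setminus F_{K'}\subseteq K'(F\setminus F_{K'})$ (since $e\in K'$), or $k_2h\in F$, in which case $k_2h\in F\setminus F_{K'}$ (because $k_1(k_2h)\notin F$) and $h=k_2^{-1}(k_2h)\in K'(F\setminus F_{K'})$. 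Combining this with $|F\setminus F_{K'}|\le|K'|\delta|F|$ (Proposition~\ref{ksets}(2), applicable since $F$ is $(K',\delta)$-invariant and $e\in K'$) gives $|F\setminus F_{K'K'}|\le|K'|^2\delta|F|$. Adding the interior and boundary contributions and dividing by $|F|$ then yields precisely $\frac{|KV\cap F|}{|F|}\le|K'|^2\delta+\frac{|K|}{|K'|}$. The only real subtlety in the whole argument is keeping this constant sharp.
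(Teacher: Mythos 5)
Your proof is correct and takes essentially the same approach as the paper's: split $V$ into an ``interior'' part landing in the $K'$-core of $F$ (bounded via $K'$-separation by $|K|/|K'|$) and a ``boundary'' part (bounded via $(K',\delta)$-invariance by $|K'|^2\delta$). The only cosmetic difference is that you route the boundary contribution through $F\setminus F_{K'K'}\subseteq K'(F\setminus F_{K'})$, whereas the paper shows directly that $KV\cap F\subset K(V\cap F_{K'})\cup K(F\setminus F_{K'})$; both give the same constant.
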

\begin{proof}
We have $KV\cap F\subset K(V\cap F_{K'})\cup K(F\setminus F_{K'})$. Indeed, if $f=kv\in KV\cap F$ with $k\in K$ and $v\in V\cap F_{K'}$ then $f\in K(V\cap F_{K'})$. Otherwise consider two cases 
\begin{enumerate}[(a)]
    \item If $v\in V\cap(F\setminus F_{K'})$ then $f\in K(F\setminus F_{K'})$. 
    \item Now suppose $v\notin F$. If 
    $kv$ belonged to $F_{K'}$ then $v=k^{-1}kv$ would belong to $K'F_{K'}\subset F$, a contradiction. So, in this case, $f=kv\in F\setminus F_{K'}\subset K(F\setminus F_{K'})$.
\end{enumerate}
By Proposition~\ref{ksets}~(2), we have $\frac{|K(F\setminus F_{K'})|}{|F|}\le\frac{|K|\cdot|F\setminus F_{K'}|}{|F|}\le|K|\cdot|K'|\delta\le |K'|^2\delta$.
Since the elements of $V\cap F_{K'}$ are $K'$-separated and $K'(V\cap F_{K'})\subset F$, the cardinality of $V\cap F_{K'}$ is at most $\frac{|F|}{|K'|}$, and thus
$$
\frac{|K(V\cap F_{K'})|}{|F|}\le\frac{|K|}{|K'|},
$$
and we are done.
\end{proof}

\begin{defn}\label{fr}
A point $x$ in a dynamical system $(X,G)$ is \emph{free} if for any $g\in G\setminus\{e\}$ one has $g(x)\neq x$.
A \tl\ $G$ system $(X,G)$ is \emph{free} if all elements $x\in X$ are free.
A measure-preserving $G$-system $(Y,\nu,G)$ (where $(Y,\nu)$ is a standard probability space\footnote{In the notation of standard probability spaces we will skip the symbol denoting the sigma-algebra. Any topological space is, by default, equipped with the Borel sigma-algebra.} and $G$ acts on $(Y,\nu)$ by measure automorphisms) is \emph{free} if $\nu$-almost every $y\in Y$ is free.
\end{defn}

\begin{defn}\label{htop}
Let $X$ be a $G$-subshift, where $G$ is amenable, with a F\o lner \sq\ $\F=(F_n)_{n\in\N}$. The \emph{\tl\ entropy} of $X$ is defined as follows: 
$$
\htop(X)=\lim_{n\to\infty}\frac1{|F_n|}\log|\{\alpha\in\Lambda^{F_n}:\ \alpha\text{ occurs in some }x\in X\}|
$$
(it is known that the limit exists, equals the infimum, and does not depend on the F\o lner \sq -- see e.g.~\cite[Theorem~6.1]{LW}).
\end{defn}

\begin{defn}\label{entropy}
Let $(Y,\nu,G)$ be a measure-preserving $G$-system, where $G$ is amenable, with a F\o lner \sq\ $\F=(F_n)_{n\in\N}$. The \emph{entropy} of $\nu$, denoted by~$h(\nu)$, is defined in three steps. 
\begin{enumerate}
	\item Given a finite measurable partition $\P$ of $Y$, we let
	$$
	H_\nu(\P)=-\sum_{P\in\P}\nu(P)\log(\nu(P)).
	$$
	\item The \emph{entropy of the process} generated by $\P$ is defined as
	$$
	h(\nu,\P)=\lim_{n\to\infty}\frac1{|F_n|}H_\nu\left(\bigvee_{i=0}^{n-1}g^{-1}(\P)\right).
	$$
	\item Finally, we let
	$$
	h(\nu)=\sup_{\P}h(\nu,\P),
	$$
	where the supremum is taken over all finite measurable partitions of $Y$.
\end{enumerate}
It is known that the limit in (2) exists, equals the infimum, and does not depend on the F\o lner \sq.
\end{defn}
If $\nu$ is a shift-\im\ on a symbolic system, then $h(\nu)=h(\nu,\P_\Lambda)$, where $\P_\Lambda$ is the \emph{one-symbol partition} of $\Lambda^G$ by the sets $[a]=\{x\in X: x(e)=a\}$, $a\in\Lambda$.

\subsection{Initial motivation} 
The original plan of the work that eventually led us to writing this article was to prove that $K$-shifts are \emph{universal} for free ergodic systems in the following sense.

\begin{defn}Let $G$ be amenable and let $X$ be a $G$-subshift with positive \tl\ entropy $\htop(X)$. We say that $X$ is \emph{universal} if, for any free ergodic $G$-system $(Y,\nu,G)$ with entropy $h(\nu)<\htop(X)$, there exists an \im\ $\mu$ on $X$ such that the systems $(X,\mu,G)$ and $(Y,\nu,G)$ are isomorphic (we will briefly say that $\mu$ is \emph{isomorphic} to $\nu$).
\end{defn}

\begin{rem}If we define universality without the restriction to \emph{free} ergodic systems, then no $K$-shift would be universal, as no $K$-shift supports an \im\ concentrated at one point. 
\end{rem}

While working on the proof, we have discovered that the attempted theorem is actually false, which becomes evident in the following simple example.

\begin{exam}
Let $G=\Z\times\Z_6$ (any Abelian group is amenable) and let $K=\{(0,0), (0,1), (0,5)\}$. Then for every $K$-separated set $V$ and each $n\in\Z$, the intersection of $V$ with the coset $\{n\}\times\Z_6$ is either empty or contains just one point $(n,i)$, or consists of two points: $(n,i)$ and $(n,i+3)$. If $V$ is maximal then for each $n$ it has to be the third option (see Figure~\ref{exa1}).
\begin{figure}[h]
\includegraphics[width=11cm]{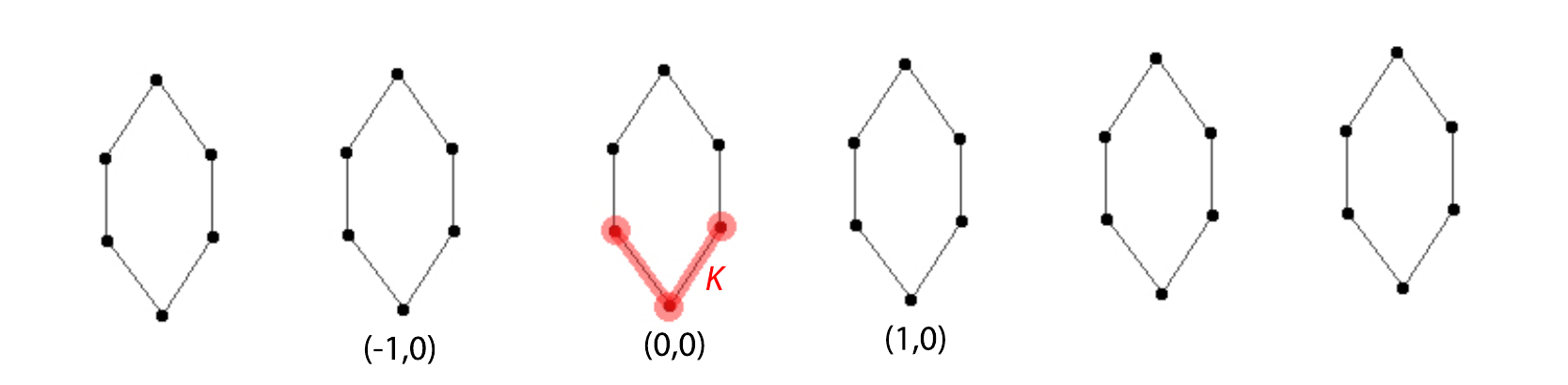}

\includegraphics[width=11cm]{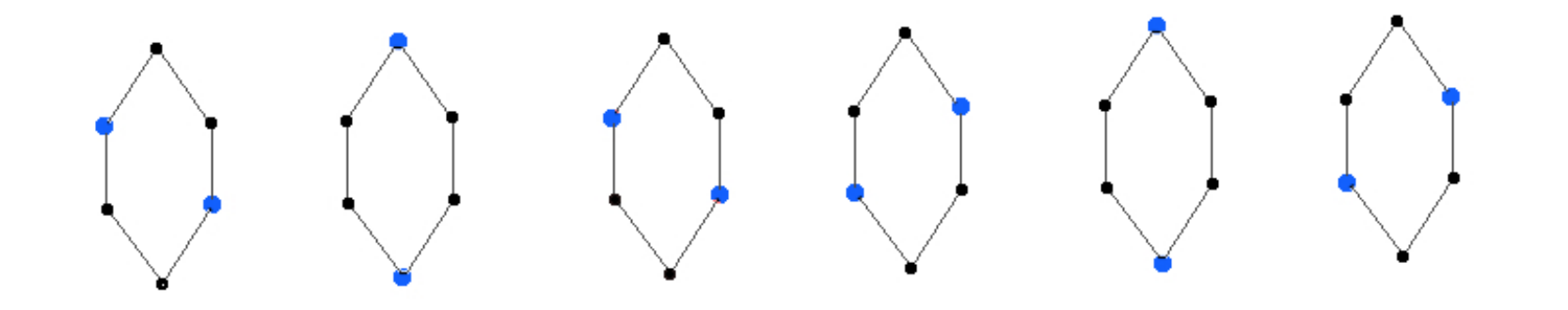}
\caption{Top figure: the group $\Z\times\Z_6$, the set $K$ (in red). Bottom figure: a maximal $K$-separated set (in blue).}\label{exa1}
\end{figure}

But then $V+(0,3)=V$, meaning that $(0,3)$ acts on $\Omega_K$ by identity.
This implies that $\Omega_K$ does not carry free ergodic measures. On the other hand, one easily verifies that $\htop(\Omega_K)=\frac{\log3}6>0$ and it is well known that there exist free measure-preserving $G$-actions with arbitrarily small entropy
(see e.g. \cite[Theorem~6.1]{DHZ} for zero entropy examples). Since the $K$-shift $\Omega_K$ does not support \im s isomorphic to free ergodic measures with entropy less than $\htop(\Omega_K)$, it is not universal.
\end{exam}

\subsection{Specification}
Further investigation revealed that the theorem can be saved by making the assumption that the system $(X,G)$ contains at least one free element. Of course, such a condition is implied by universality, so it is in fact an \emph{equivalent condition}. Then it turned out that under this assumption universality holds in a much larger class of subshifs, namely subshifts with the so-called specification property.

\begin{defn}\label{spec}Let $G$ be any countable group and let $M\subset G$ be a finite set containing $e$. Two sets $A_1,A_2\subset G$ are said to be \emph{$M$-apart} if $MA_1\cap MA_2=\varnothing$. A nontrivial symbolic system $(X,G)$ has the \emph{specification property with margin $M$} if 
whenever $A_1\subset G$ and $A_2\subset G$ are $M$-apart, $\alpha_1, \alpha_2$ are $X$-admissible patterns over $A_1$, $A_2$, respectively, then the pattern $\alpha$ over $A_1\cup A_2$, such that $\alpha|_{A_1}=\alpha_1$ and $\alpha|_{A_2}=\alpha_2$, is $X$-admissible.
\end{defn}

By an obvious induction, the specification property with margin $M$ implies that any (finitely or countably many) $X$-admissible patterns, whose domains are pairwise $M$-apart, jointly form an $X$-admissible pattern. 

The first thing to check is that specification implies positive entropy, which is required in the definition of universality. 

\begin{fact}\label{Omega_entropy}
Let $G$ be amenable. If a nontrivial \footnote{We need at least two $X$-admissible symbols.} subshift $(X,G)$ has the specification property (with some margin $M$) then $\htop(X)>0$.
\end{fact}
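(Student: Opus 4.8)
The plan is to use specification to ``plant'' symbols independently on a positive-density subset of every F\o lner set, thereby producing exponentially many admissible patterns. Fix two distinct $X$-admissible symbols $a,b$, which exist because $X$ is nontrivial; by shift-invariance, for every $g\in G$ both one-point patterns $\{g\}\to\{a\}$ and $\{g\}\to\{b\}$ are $X$-admissible.

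Fix a F\o lner \sq\ $(F_n)_{n\in\N}$. For each $n$, choose $V_n\subset F_n$ to be a \emph{maximal} subset of $F_n$ subject to the condition that the translates $Mv$, $v\in V_n$, are pairwise disjoint; equivalently, the singletons $\{v\}$, $v\in V_n$, are pairwise $M$-apart. Maximality yields a density bound: for any $g\in F_n\setminus V_n$ there is $v\in V_n$ with $Mg\cap Mv\ne\varnothing$, i.e.\ $g\in M^{-1}Mv$; together with $V_n\subset M^{-1}MV_n$ (as $e\in M$) this gives $F_n\subset M^{-1}MV_n$, hence $|V_n|\ge|F_n|/|M^{-1}M|\ge|F_n|/|M|^2$.

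Now invoke specification. Since the domains $\{v\}$, $v\in V_n$, are pairwise $M$-apart, the iterated form of the specification property (stated right after Definition~\ref{spec}) shows that for every $\phi\colon V_n\to\{a,b\}$ the pattern over $V_n$ sending $v$ to $\phi(v)$ is $X$-admissible; realizing it inside some $x_\phi\in X$ and restricting $x_\phi$ to $F_n\supset V_n$ produces an $X$-admissible pattern over $F_n$ whose restriction to $V_n$ recovers $\phi$. Thus $\phi\mapsto x_\phi|_{F_n}$ is injective, so the number of $X$-admissible patterns over $F_n$ is at least $2^{|V_n|}\ge 2^{|F_n|/|M|^2}$. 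Feeding this estimate into Definition~\ref{htop} gives $\htop(X)\ge(\log 2)/|M|^2>0$.

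The construction is elementary and I foresee no genuine obstacle; the only step requiring a moment's care is the maximality argument establishing $F_n\subset M^{-1}MV_n$. It is worth noting that, beyond the mere existence of a F\o lner \sq, neither amenability nor freeness plays any role in this particular statement.
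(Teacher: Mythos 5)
Your proof is correct and follows essentially the same strategy as the paper: use specification to plant the two admissible symbols freely on an $M$-separated set of positive density inside each F\o lner set, yielding at least $2^{|F_n|/|M|^2}$ admissible patterns over $F_n$. The one small (and arguably cleaner) variation is that the paper fixes a single global maximal $M$-separated set $V\subset G$ and then appeals to property (I) to argue that $|V\cap F_n|\ge (1-\eps)|F_n|/|M^{-1}M|$ once $F_n$ is large, whereas you choose a fresh maximal $M$-separated set $V_n$ \emph{inside} each $F_n$ and obtain the exact bound $|V_n|\ge|F_n|/|M^{-1}M|$ for every $n$ with no asymptotics; this sidesteps the ``$F_n$ sufficiently large'' step. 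Both routes deliver the same lower bound $\htop(X)\ge(\log 2)/|M^{-1}M|$, and I see no gap in your argument.
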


\begin{proof}We may assume that $0\in\Lambda$ and $1\in\Lambda$ are $X$-admissible symbols.
Let $V$ be a maximal $M$-separated set. By the specification property, any configuration of $0$'s and $1$'s over $V$ is $X$-admissible. By property (I) of the maximal $M$-separated sets, for any $\eps>0$, if $F_n$ is a sufficiently large member of a F\o lner \sq\ $\F=(F_n)_{n\in\N}$, then the cardinality of $F_n\cap V$ is at least $\frac{|F_n|(1-\eps)}{|M^{-1}M|}$. This easily implies $\htop(X)\ge\frac{\log2}{|M^{-1}M|}$.
\end{proof}

\begin{fact}\label{nomin}
Let $G$ be amenable. A nontrivial subshift $(X,G)$ with specification is never minimal.\footnote{A system $(X,G)$ is minimal if it contains no proper nonempty closed \inv\ subsets.} 
\end{fact}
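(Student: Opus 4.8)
The plan is to show that a nontrivial subshift $(X,G)$ with specification contains a proper, nonempty, closed, invariant subset, so it cannot be minimal. The idea is to exploit the fact that specification gives us enormous freedom to build many different admissible configurations, and in particular to build one whose orbit closure omits some admissible pattern.

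First I would fix two $X$-admissible symbols, say $0$ and $1$, and fix a maximal $M$-separated set $V\subset G$ (recall $M$ is the margin from Definition~\ref{spec} and $e\in M$). Since the elements of $V$ are pairwise $M$-apart, the specification property (together with the induction remark following Definition~\ref{spec}) tells us that \emph{every} assignment of symbols from $\{0,1\}$ to the points of $V$ extends to an $X$-admissible pattern on $V$, hence — by a standard compactness argument — to a point of $X$. Now I would pick a single point $x_0\in X$ obtained by putting the symbol $1$ at \emph{every} point of $V$ and (using specification again on the complementary positions, or simply any admissible completion) filling in the rest; more conveniently, since every $\{0,1\}$-pattern over $V$ is admissible, I can realize the constant-$1$-on-$V$ choice. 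The key observation is that because $V$ is maximal $M$-separated, $M^{-1}MV=G$, so $V$ is "syndetic" in a controlled sense: every coset $M^{-1}Mg$ meets $V$. Consequently, in $x_0$, the symbol $1$ appears with bounded gaps along the pattern of $V$.

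The cleaner route, which I would actually carry out, is to produce a point $x_0\in X$ whose orbit closure $\overline{Gx_0}$ is not all of $X$, by arranging that some finite $X$-admissible block $\beta$ does \emph{not} occur in $x_0$. To do this, I would choose $x_0$ so that it contains the symbol $1$ at all positions of $V$ (which is possible by specification), and then note that any block $\beta$ over a set $A$ with $|A|>|MV\cap A'|$-type density too low in $1$'s — concretely, a block that is identically $0$ on a set large enough to force it to contain a point of $V$ — cannot occur in $x_0$. More precisely: since $V$ is maximal $M$-separated, any translate $Ag$ of a sufficiently large finite set $A$ contains a point of $V$; choosing such an $A$ and the block $\beta\equiv 0$ on $A$ (admissible, since $0$ is an admissible symbol and specification lets us realize constant-$0$ patterns on any $M$-separated subset of $A$, then complete), we get that $\beta$ does not occur in $x_0$, because every occurrence of $\beta$ would force a $0$ at some point of $V$, contradicting the construction. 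Then $\overline{Gx_0}$ is a nonempty closed invariant subset of $X$ in which $\beta$ does not occur, while $\beta$ occurs in $X$; hence $\overline{Gx_0}\subsetneq X$, and $X$ is not minimal.

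The main obstacle — really the only delicate point — is making sure the blocks I use are genuinely $X$-admissible and that the maximality of $V$ forces the claimed "every large translate meets $V$" property with the right quantitative bound; this is exactly property (I) of maximal $M$-separated sets ($M^{-1}MV=G$), so given any finite $A$ we only need $A$ large enough that some translate $Ag$ contains a full coset $M^{-1}Mh$, which holds once $A$ contains a translate of $M^{-1}M$. One must also be slightly careful that the constant-$0$ block on such an $A$ is admissible: this follows by taking an $M$-separated subset $W$ of $A$, assigning $0$ on $W$ by specification, and completing to an admissible pattern on $A$ — but then the completion might not be identically $0$, so instead I would phrase the omitted object not as "the all-zero block on $A$" but as "a block that is $0$ on all of $W$", where $W$ is a maximal $M$-separated subset of $A$; such a block occurring in $x_0$ would place $W$ inside a translate of the complement of $V$'s support, which is impossible by a counting/covering argument since $W$ is too dense to avoid $V$. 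Handling this density bookkeeping cleanly (perhaps invoking Proposition~\ref{aux} with a symmetric $M'\supseteq M^{-1}M$) is where the care is needed, but it is routine.
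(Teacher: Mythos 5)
Your plan of exhibiting a point $x_0\in X$ whose orbit closure misses some admissible block has a genuine gap at the crucial step, and I don't see how to close it within this framework.

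The decisive claim in your proposal is that a block $\beta$ over $A$ that is $0$ on a maximal $M$-separated subset $W$ of $A$ cannot occur in $x_0$, on the grounds that ``$W$ is too dense to avoid $V$.'' This is false. A maximal $M$-separated set has (upper Banach) density at most $1/|M|$ and roughly $1/|M^{-1}M|$; consequently both $W$ and $V$ are \emph{sparse}, and $V^c$ is dense. There is nothing preventing a translate $Wg$ from landing entirely in $V^c$. Concretely, take $G=\Z$ and $M=\{0,1\}$, so a maximal $M$-separated set is a set of integers with gaps in $\{2,3\}$. Let $V=2\Z$, let $A=\{-10,\dots,10\}$ and $W=A\cap 2\Z$, which is a maximal $M$-separated subset of $A$. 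Then $W+1$ consists entirely of odd integers and is disjoint from $V$, so for $g=1$ there is no clash at all between the $0$'s of $\beta$ and the $1$'s placed on $V$. Proposition~\ref{aux} would not rescue this: it gives an \emph{upper} bound on how much of $F$ a set of the form $KV$ can occupy, which goes in the wrong direction.

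Your first variant (take $\beta\equiv 0$ on all of $A$ with $A\supset M^{-1}M$) would indeed be sound \emph{if} the constant-$0$ block were $X$-admissible, since then property (I) would force an occurrence to overwrite a $1$ of $V$. But, as you yourself note, specification only guarantees admissibility of prescribed values on an $M$-separated subset of $A$; the completion to the rest of $A$ is uncontrolled. And in the very class of examples that motivated this paper --- the $K$-shifts $\Omega_K$ --- the constant-$0$ block over any set containing a translate of $K^{-1}K$ is \emph{not} admissible (property (I) again), so this cannot be repaired in general. There is a more fundamental obstruction: your $x_0$ is controlled only on the sparse set $V$; the filling elsewhere is arbitrary and could contain occurrences of essentially any admissible block, so the single-orbit-closure strategy lacks enough control.

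For comparison, the paper's proof avoids this entirely. For each admissible block $\alpha$ over $F\ni e$ it introduces the closed invariant set $X_\alpha$ of all $x\in X$ containing a ``grid'' of copies of $\alpha$ over a maximal $MF$-separated set; $X_\alpha$ is nonempty by specification. If $X$ were minimal, then $X=X_\alpha$ for every $\alpha$, and then every admissible block over $F$ must occur in every $x\in X$ centered somewhere in a fixed finite set $K^{-1}K$ (with $K=MF$), giving the bound $|\B_X(F)|\le|K^{-1}K|$. This forces $\htop(X)=0$, contradicting Fact~\ref{Omega_entropy}. The entropy count replaces the (unavailable) pointwise density argument, and is the essential new ingredient you would need.
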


\begin{proof}
Let $\alpha$ be a block with a finite domain $F\ni e$. Let $K=MF$, where $M$ is the margin of specification of $X$. We define $X_\alpha$ as the collection of all these elements $x\in X$ for which there exists a maximal $K$-separated set $V$ such that  
\begin{equation}\label{av}
x|_{Fv}\approx\alpha, \text{ \ for each }v\in V.
\end{equation}
Because, for each maximal $K$-separated set $V$, the sets $Fv$ with $v\in V$ are $M$-apart, the specification property of $X$ implies that the collection of elements $x\in X$ satisfying~\eqref{av} with a fixed set $V$ is nonempty, and so is $X_\alpha$. 
Since the set $\Omega_K$  consisting of the indicator functions of all maximal $K$-separated sets is closed and shift-\inv, so is $X_\alpha$. Suppose that $(X,G)$ is minimal. Then $X=X_\alpha$ for every finite $X$-admissible block~$\alpha$. Choose a F\o lner \sq\ $\F=(F_n)_{n\ge1}$ so that $F_n\ni e$ for all $n$.
Fix some $n\ge 1$ and let $K=MF_n$. Fix an $x\in X$. Since, for each $X$-admissible block $\alpha$ over $F_n$, $x\in X_\alpha$, $\alpha$ occurs in $x$ centered at the elements of some maximal $K$-separated set. By the property~(I) of maximal $K$-separated sets, $K^{-1}K$ contains a representative of every maximal $K$-separated set, and hence it contains the center of an occurrence in $x$ of every $X$-admissible block $\alpha$ over $F_n$. Since there are $|K^{-1}K|$ centers to choose from, there are at most $|K^{-1}K|\le|M|^2|F_n|^2$ different $X$-admissible blocks $\alpha$ over $F_n$. Thus, according to Definition~\ref{htop}, we have
$$
\htop(X)\le\lim_{n\to\infty}\frac{2\log|M|+2\log|F_n|}{|F_n|}=0,
$$
which is a contradiction with Fact~\ref{Omega_entropy}.
\end{proof}

\subsection{Main theorem} 
We are in a position to state the main result of this paper.

\begin{thm}\label{main}
Let $G$ be amenable and let $(X,G)$ be a subshift with specification.  If $(X,G)$ contains at least one free element then $(X,G)$ is universal.
\end{thm}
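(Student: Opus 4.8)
The plan is to realize an arbitrary free ergodic system $(Y,\nu,G)$ with $h(\nu)<\htop(X)$ as a shift-invariant measure on $X$ by building, in a measurable and equivariant way, a coding of $Y$ into $X$. The starting point is a Rokhlin-type tower / tiling result for free ergodic amenable actions: for any finite $K\subset G$ and any $\eps>0$, the space $Y$ can be (almost) partitioned into finitely many ``shapes'' $Fv$, where the shapes $F$ come from a finite collection of $(K',\delta)$-invariant sets and the ``centers'' $v$ form, along each orbit, a $K$-separated set that is moreover maximal on the relevant part of the orbit. This is where freeness of the $Y$-action is essential, and it is the standard device (Ornstein--Weiss, or the $\eps$-quasitilings of Downarowicz--Huczek--Zhang) for transferring entropy computations from $\Z$-actions to general amenable groups. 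Having such tilings, the measurable isomorphism will be obtained by an $\bar d$- (Rokhlin-metric) approximation argument à la the classical Krieger / Sinai proofs: one builds a sequence of finite generating-type partitions of $Y$ whose names lie in $X$ and which converge, so that the factor map they define is injective.

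The key steps, in order, would be: \textbf{(1)} Fix a finite generating partition-free data structure for $(Y,\nu,G)$: since $h(\nu)<\infty$ we may choose finite partitions $\P_1\preceq\P_2\preceq\cdots$ with $\bigvee_n\bigvee_{g}g^{-1}\P_n$ generating mod $\nu$ and with $h(\nu,\P_n)\to h(\nu)$, all still $<\htop(X)$. \textbf{(2)} Use the specification margin $M$ to produce, for a large $K$-separated set $V$ and shape-sets $F$ that are sufficiently $M$-invariant, a large supply of $X$-admissible blocks over the shapes: by the specification property, \emph{any} assignment of a pattern to each translate $Fv$ ($v\in V$, the $Fv$ pairwise $M$-apart) glues to an $X$-admissible pattern, and — crucially — one still has the freedom to fill the complement arbitrarily, in particular to make the resulting element free (this is where the hypothesis that $X$ contains a free element enters, via a local modification near each shape). \textbf{(3)} Entropy bookkeeping: because $\htop(X)>h(\nu)$ and the number of $X$-admissible blocks over a Følner set $F_n$ grows like $e^{|F_n|\htop(X)}$ while the number of $\P_n$-names over $F_n$ grows only like $e^{|F_n|h(\nu,\P_n)}$, there are, for large $n$, \emph{injective} codes from $\P_n$-names on shapes into $X$-admissible blocks, with exponentially much room to spare. \textbf{(4)} Assemble the code: on the tiled part of a $\nu$-typical $y$, replace each shape $Fv$ by the chosen $X$-block; on the (small, by $\eps$) untiled part put the free filler. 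This defines a measurable equivariant map $\phi_n:Y\to X$; push $\nu$ forward to get $\mu_n=(\phi_n)_*\nu\in\M_\sigma(X)$. \textbf{(5)} Convergence and injectivity: choosing the parameters $(\eps_n,\delta_n,K_n,F^{(n)})$ to go to their limits fast enough, the maps $\phi_n$ converge in the appropriate sense, the limit map $\phi$ satisfies $h((\phi)_*\nu)=h(\nu)$ (no entropy is lost, because the code is injective on names and the untiled part is negligible), and — since the partitions $\P_n$ generate and the codes are eventually injective with margin — $\phi$ is an isomorphism onto $(X,\mu,G)$ with $\mu=\phi_*\nu$.

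The main obstacle I expect is step \textbf{(2)}–\textbf{(4)} done \emph{coherently across scales}: it is easy to code a single Følner scale, but to get an actual measure-theoretic isomorphism (not merely a factor) one must choose the codes at scale $n+1$ to be compatible with, and refine, the information already recorded at scale $n$, all while staying inside $X$-admissible patterns and while keeping every produced point free. Maintaining freeness is a genuine nuisance: gluing via specification controls the pattern only on $MA_1\cup MA_2$, so one has to reserve, near the boundary of each tile, a buffer region whose content is dictated by (a translate of) a fixed free element of $X$, and then show that the $\eps$-fraction of $Y$ occupied by buffers and untiled junk can be driven to $0$ without destroying the entropy accounting. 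Secondarily, proving that the limit of the $\mu_n$ actually carries $\nu$ (rather than some lower-entropy factor) requires the standard but delicate ``no entropy drop'' estimate, which is where the strict inequality $h(\nu)<\htop(X)$ and the exponential surplus of $X$-blocks are used quantitatively.
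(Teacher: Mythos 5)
Your plan correctly identifies the use of Ornstein--Weiss style quasitilings for the source system $(Y,\nu,G)$, the entropy-gap counting that supplies an injective block code, and the need to make the construction measurable and equivariant across scales. Those are all genuine ingredients of the paper's argument. However, there are two concrete gaps, one of them fatal as stated.

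\textbf{No mechanism for decoding the tiling.} Your step (4) replaces each shape $Fv$ by an $X$-admissible block and pushes $\nu$ forward. But to show $\phi$ is a measure-theoretic \emph{isomorphism} (not merely a factor map), you must be able to reconstruct $y$ from $x=\phi(y)$; and the first thing one needs is to read off, from $x$ alone, where the tiles of the quasitiling were. Nothing in your scheme records this. You cannot simply ``remember'' which positions were tile centers, because $x\in X$ is just a point of the subshift. The paper devotes most of its technical work to exactly this problem: it builds a subsystem $\bar X\subset X$ obtained by inserting a grid of a forbidden block $\rho$ (so that a ``basic marker'' $\beta$ never occurs in $\bar X$), then constructs \emph{unambiguous markers} $\zeta_t$ whose occurrences in the image can only be at the planted tile centers (Lemma~\ref{nobeta}, Proposition~\ref{unamb}). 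This synchronization device is essential and is not replaceable by a $\bar d$-limit hand-wave: in the $\bar d$/Burton--Rothstein style argument you gesture at, making the eventual factor map injective is precisely the hard part, and for general amenable groups you would still need some analogue of markers (or of a generator-finding argument along the lines of Rosenthal's uniform generators, which the paper does invoke, but only for the \emph{source} side).

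\textbf{Misidentified role of the free element of $X$.} You propose using the free element of $X$ as a ``filler'' to ensure that the constructed points are free, and describe this as a genuine nuisance. This concern is misplaced: freeness of $\mu$-a.e.\ point of $(X,\mu,G)$ is automatic once $\mu$ is isomorphic to a free $\nu$, since $\mu\{x: gx=x\}=\nu\{y: gy=y\}=0$. The actual reason the hypothesis is needed is subtler, and the example in the paper ($G=\Z\times\Z_6$, $K=\{(0,0),(0,1),(0,5)\}$) shows what goes wrong without it: the system of potential marker positions inherited from $\beta$ may be shift-invariant under a nontrivial finite subgroup $H_\beta$, making $\beta$ an \emph{ambiguous} marker. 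The free element of $X$ is used to manufacture a block $\gamma$ that is \emph{resistant} to shifts by the elements of a suitable finite subgroup $H_0$ (Definition~\ref{resis}, subsection~\ref{resb}), and the combinatorial Lemma~\ref{kopacz} then lets one glue $\gamma$ to two translated copies of $\underline\beta$ to obtain an unambiguous marker. None of this appears in your outline, and without it the decoding step fails whenever $J_\beta$ is permutable.

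\textbf{Secondary remarks.} Your convergence scheme (a $\bar d$-Cauchy sequence $\phi_n$ with ``no entropy drop'') is vaguer than what is needed. The paper instead performs the coding at a \emph{single} scale (tiles of one disjoint quasitiling $\tilde\TT$), and the multi-scale inductive step, using a congruent system of quasitilings, is used solely to fill the \emph{gaps} between tiles and the margins around the markers in a measurable and equivariant way (subsection~\ref{killer}). There is no entropy bookkeeping across scales to worry about, because the injective code is fixed once and for all; what remains is only the delicate point of choosing the ``glue'' deterministically, which your outline also does not address.
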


Before we pass to the proof, we would like to show that (under the assumption of existence of a free element) this result covers our original goal, showing that $K$-shifts are universal. The following lemma holds for any countable group $G$.

\begin{lem}\label{specK} Let $K\subset G$ be a finite set containing $e$ and at least one more element. The $K$-shift \,$\Omega_K$ has the specification property with the margin $M=KK^{-1}K$.
\end{lem}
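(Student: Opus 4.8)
The plan is to verify the specification property directly from the definition by checking that whenever we are given $X$-admissible patterns over two sets that are $M$-apart, we can produce a single maximal $K$-separated set that realizes both of them simultaneously. So let $A_1, A_2 \subset G$ be $M$-apart with $M = KK^{-1}K$, and let $\alpha_1, \alpha_2$ be $\Omega_K$-admissible patterns over $A_1, A_2$. Being admissible means each $\alpha_i$ is the restriction to $A_i$ of the indicator function $\mathbbm 1_{V_i}$ of some maximal $K$-separated set $V_i$. Writing $W_i = V_i \cap A_i$ (the "$1$"-positions prescribed by $\alpha_i$ inside $A_i$), the goal is to build a single maximal $K$-separated set $V$ whose intersection with $A_1$ is exactly $W_1$ and with $A_2$ is exactly $W_2$.

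First I would construct the "seed" $W = W_1 \cup W_2$ and check it is $K$-separated: the pieces $W_1 \subset V_1$ and $W_2 \subset V_2$ are each $K$-separated on their own, so the only danger is a collision $Kw_1 \cap Kw_2 \ne \varnothing$ with $w_i \in W_i \subset A_i$; but that would force $w_1 \in K^{-1}Kw_2 \subset K^{-1}K A_2$, hence $Kw_1 \cap KA_2 \ne \varnothing$ after multiplying by $K$, which (since $K^{-1}K A_1 \ni w_1$ and a further $K$ gives $KK^{-1}K A_1 = MA_1$) contradicts $MA_1 \cap MA_2 = \varnothing$. (I would lay out this chain of inclusions carefully — it is the one spot where the exact shape $M = KK^{-1}K$ is used.) So $W$ is $K$-separated, and by the elementary fact noted in the excerpt it extends to some maximal $K$-separated set $V \supseteq W$.

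The remaining issue is that extending $W$ to a maximal set $V$ might introduce new elements of $V$ inside $A_1$ or $A_2$ that were not prescribed, i.e. we could have $V \cap A_1 \supsetneq W_1$, breaking the requirement that $\mathbbm 1_V|_{A_1} = \alpha_1$. To prevent this I would not extend $W$ arbitrarily but extend it using the structure of $V_1$ and $V_2$ themselves. Concretely: start from $W$, and then greedily add points of $G$ to form a maximal $K$-separated set, but when choosing points near $A_1$ prefer (or restrict to) points of $V_1$, and similarly near $A_2$ prefer points of $V_2$. The key observation is that for $g$ with $Kg \subset$ a small neighborhood of $A_1$, if $g \notin V_1$ then by maximality of $V_1$ the set $K^{-1}Kg$ already meets $V_1$, and that witness lies in (a neighborhood of) $A_1$ hence was already forced into $W$ or will be — so no "unwanted" point of $A_1$ needs to be added. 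Making this precise: it suffices to show that $W_1$ is already maximal $K$-separated \emph{relative to} the region $\{g : K^{-1}Kg \subset A_1\}$ — actually, since the two regions are far apart ($M$-apart, so in particular $K^{-1}K A_1$ and $K^{-1}K A_2$ are disjoint and even $K$-apart), any completion of $W$ can be carried out independently over the part of $G$ influenced by $A_1$, the part influenced by $A_2$, and the rest, and over the $A_1$-part one simply completes $W_1$ to $V_1$ itself, over the $A_2$-part one completes $W_2$ to $V_2$, and over the rest one completes greedily.

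I expect the main obstacle to be this last bookkeeping step: rigorously isolating the three zones and checking that a maximal $K$-separated set can be assembled zone-by-zone without the zones interfering — in particular that points added in the "middle" zone to achieve global maximality do not land in $A_1 \cup A_2$ and do not collide with $W$. The distance estimates all come down to unwinding $MA_1 \cap MA_2 = \varnothing$ with $M = KK^{-1}K$ into statements like "$K^{-1}K A_1$ and $K^{-1}K A_2$ are $K$-apart", which is exactly the buffer one needs so that a point whose $K^{-1}K$-neighborhood meets $A_1$ has its $K^{-1}K$-neighborhood disjoint from $A_2$. Once the zones are separated, within the $A_1$-zone choosing $V = V_1$ there automatically gives the prescribed pattern $\alpha_1$ (including the $0$'s, since $V_1$ witnesses admissibility of all of $\alpha_1$, not just its support), and symmetrically for $A_2$; and property (I)/(II) of maximal $K$-separated sets guarantee the assembled $V$ is globally maximal.
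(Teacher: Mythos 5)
Your strategy is on the right track and you correctly identify the central difficulty: seeding the maximal extension with only $W = W_1 \cup W_2 = (V_1\cap A_1)\cup(V_2\cap A_2)$ is not enough, because an arbitrary maximal $K$-separated extension $V\supset W$ may acquire unwanted elements inside $A_1$ or $A_2$, spoiling the prescribed $0$'s of $\alpha_1$ or $\alpha_2$. Your proposed fix --- carve $G$ into three zones, graft $V_1$ onto the $A_1$-zone and $V_2$ onto the $A_2$-zone, and complete greedily in between --- is the right idea, but you explicitly leave the assembly and its maximality as unverified bookkeeping. That bookkeeping is the whole content of the lemma, so as written there is a genuine gap: the proposal does not establish that the assembled set is maximal $K$-separated, nor that the graft does not collide across zone boundaries.

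The paper closes this gap more cleanly by dispensing with explicit zones: enlarge the seed to $(V_1\cap K^{-1}KA_1)\cup(V_2\cap K^{-1}KA_2)$ rather than $W_1\cup W_2$. This set is still $K$-separated because $K^{-1}KA_1$ and $K^{-1}KA_2$ are $K$-apart, which is precisely what $MA_1\cap MA_2=\varnothing$ with $M=KK^{-1}K$ gives (the same unwinding you do). Now take \emph{any} maximal $K$-separated extension $V$ of this seed. If some $g\in V\cap A_1$ were not in $V_1$, then by property (I) of the maximal set $V_1$ there is $g'\in V_1\cap K^{-1}Kg\subset V_1\cap K^{-1}KA_1\subset V$, with $g'\neq g$ and $g'\in K^{-1}Kg$; this violates property (II) for $V$. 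So $V\cap A_1 = V_1\cap A_1$ automatically, and likewise for $A_2$. The extra ring of $V_1$-points in the seed acts as a moat that prevents any new point from entering $A_1$, and the analogous argument in your proposal (``the witness lies in a neighborhood of $A_1$ hence was already forced into $W$'') is exactly what this enlarged seed makes literally true. I would rewrite your proof along these lines: same inequalities, same use of properties (I) and (II), but with a larger seed so that no zone-by-zone assembly is needed.
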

\begin{proof}
	We need to show that if $V_1,V_2$ are two maximal $K$-separated subsets of $G$ and $F_1, F_2\subset G$ are $KK^{-1}K$-apart then there exists a maximal $K$-separated set $V\subset G$ such that 
\begin{equation}\label{KK}
V\cap F_1=V_1\cap F_1\text{ \ and \ }V\cap F_2=V_2\cap F_2.
\end{equation}
The sets $K^{-1}KF_1$ and $K^{-1}KF_2$ are $K$-apart, hence the union
$(V_1\cap K^{-1}KF_1)\cup (V_2\cap K^{-1}KF_2)$ is a $K$-separated set. Thus,
there exists a maximal $K$-separated set $V$ such that $(V_1\cap K^{-1}KF_1)\cup(V_2\cap K^{-1}KF_2)\subset V$. Suppose that \eqref{KK} does not hold. Then either $V\cap (F_1\setminus V_1)\neq\varnothing$ or $V\cap (F_2\setminus V_2)\neq\varnothing$. By symmetry, it suffices to get a contradiction in the first case.
Let $g\in V\cap F_1$ and $g\notin V_1$. Then, by (I), there exists an element $g'\in V_1\cap K^{-1}Kg\subset V_1\cap K^{-1}KF_1$. But the latter set is a subset of $V$ which implies that $g'\in V$. Since $g\in V$, $g\neq g'$ and $g'\in K^{-1}Kg$, we have a contradiction to the property (II) of maximal $K$-separated sets.
\end{proof}

\begin{rem}\label{rem1} The seemingly more natural margin $K^{-1}K$ is insufficient. Indeed, let $G=\Z^2$ and let $K=\{(0,1),(0,0),(1,0)\}$. 
The block $\alpha_1$ over the ``triangle'' between the points $(-2,1),(-2,3),(0,3)$ with $1$'s at $(-2,1)$ and $(0,3)$ and $0$'s otherwise is $\Omega_K$-admissible. So is the block $\alpha_2$ over the ``triangle'' between the points $(1,-2),(3,-2),(3,0)$ with $1$'s at $(1,-2)$ and $(3,0)$ and zeros otherwise. Note that these triangles are $K^{-1}K$-apart. But $\alpha_1\cup\alpha_2$ is not $\Omega_K$-admissible: by (I), $\alpha_1$ forces a $1$ at $(0,1)$ and $\alpha_2$ forces a $1$ at $(1,0)$, while these two positions are not $K$-apart (see Figure~\ref{fig2}). 
\begin{figure}[ht]
\includegraphics[width=7.1cm]{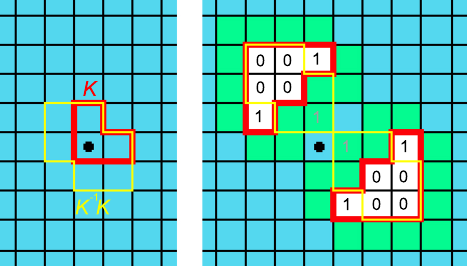}
\caption{\footnotesize The black dot shows the origin. The figure on the left shows the set $K$ (in red) and $K^{-1}K$ (in yellow). The figure on the right shows the blocks $\alpha_1$ and $\alpha_2$. The disjoint green areas show that the domains of these blocks are $K^{-1}K$ apart. The yellow frame around the point $(-1,2)$ must contain a $1$, and the only possible place is $(0,1)$ (shown in gray). Similarly, the yellow frame around $(2,-1)$ must contain a $1$ at $(1,0)$. The configuration of the gray 1's is not admitted in $\Omega_K$.}\label{fig2} 
\end{figure}
\end{rem}

\section{Tools used in the proof}

\subsection{Banach density}
\begin{defn}\label{banach}
Let $G$ be a countable group. The \emph{lower Banach density }of a subset $A\subset G$ is defined as 
	$$
	\underline d_{\mathsf B}(A)=\sup\Bigl\{\inf_{g\in G}\frac{|A\cap Fg|}{|F|}:F
	\subset G, |F|<\infty\Bigr\}.
	$$
The \emph{upper Banach density} of $A$ is defined as	
	$$
	\overline d_{\mathsf B}(A)=\inf\Bigl\{\sup_{g\in G}\frac{|A\cap Fg|}{|F|}:F
	\subset G, |F|<\infty\Bigr\}.
	$$
If $\underline d_{\mathsf B}(A)=\overline d_{\mathsf B}(A)$ then we denote the common value by $d_{\mathsf B}(A)$ and call it the \emph{Banach density} of $A$.
\end{defn}
One may easily verify that for every set $A\subset G$ it holds that $\underline d_{\mathsf B}(A)=1-\overline d_{\mathsf B}(A^c)$.

It is known (see e.g. \cite[Lemma~2.9]{DHZ}) that if $G$ is amenable and $\F=(F_n)_{n\in\N}$ is a F\o lner \sq\ in $G$ then, for any subset $A\subset G$,
\begin{equation}\label{dban_folner}
\underline d_{\mathsf B}(A)=\lim_{n\to\infty}\Bigl\{\inf_{g\in G}\frac{|A\cap F_ng|}{|F_n|}\Bigr\},
\end{equation}
and a symmetric formula holds for $\overline d_{\mathsf B}(A)$ with $\displaystyle{\sup_{g\in G}}$ in place of $\displaystyle{\inf_{g\in G}}$.

\begin{rem}\label{dr}
It is easy to see that every $K$-separated set $V$ has an upper Banach density at most $\frac1{|K|}$.
\end{rem}

\subsection{Quasitilings and dynamical quasitilings}
\begin{defn}\label{qt}
Let $G$ be a countable group and let $\S=\{S_1,S_2,\dots\}$ be a (finite or infinite) collection of finite subsets of $G$ called \emph{shapes}. By convention, each shape
contains the unity $e$. By a \emph{quasitiling} of $G$ with \emph{shapes} in $\S$ we mean any family $\T$ of subsets of $G$ that has the following structure: 
$$
\T=\{Sc:S\in\S, c\in C_S\},
$$ 
where, for each $S$, $C_S$ is a subset (usually infinite) of $G$. A set $T=Sc\in\T$ is called a \emph{tile} of shape $S$ with \emph{center} at $c$. We require that if $S\neq S'$ then $C_S\cap C_{S'}=\varnothing$. We also require that each tile has a determined shape and center, that is, whenever $Sc=S'c'$, where $S,S'\in\S$, $c\in C_S$ and $c'\in C_{S'}$ then $S=S'$ and $c=c'$. Often, given a tile $T=Sc\in\T$, we will denote its center by $c_T$. The union $\bigcup_{S\in\S}C_S$ will be called the \emph{set of centers of $\T$} and denoted by $C(\T)$.
\end{defn}

In this generality, a quasitiling can be equivalently defined as nearly any family of distinct finite subsets $T$ of $G$ called \emph{tiles}, each having a distinguished element $c_T\in T$, called the \emph{center} of $T$. The only restriction is that no two tiles share a common center (this requirement causes that not all families of distinct finite sets can play the role of quasitilings). Given a tile $T$, the set $S=Tc_T^{-1}$ is the \emph{shape} of $T$ and $\S$ is the collection of all shapes.

\begin{defn}\label{types}
A quasitiling $\T$ is called:
\begin{itemize}
	\item \emph{proper}, if $|\S|<\infty$;
	\item \emph{$(1-\eps)$-covering}, if $\bigcup\T=\bigcup_{T\in\T}T$ has lower Banach density at least $1-\eps$;
	\item \emph{disjoint}, if, for any $T,T'\in\T$, $T\neq T'\implies T\cap T'=		
	\varnothing$;
	\item \emph{$\eps$-disjoint}, if there exists a mapping $T\mapsto\tilde T$ associating to each tile $T\in\T$ its $(1-\eps)$-subset $\tilde T$ containing $c_T$, so that $T\neq T'\implies\tilde T\cap\tilde T'=\varnothing$ (the collection $\tilde \T=\{\tilde T:T\in\T\}$, where for each $T\in\T$ we set  $c_{\tilde T}=c_T$, is a disjoint quasitiling);
	\item \emph{strongly $\eps$-disjoint} if it is $\eps$-disjoint with $$\tilde T=T\setminus \bigcup\{T'\in\T:\, T\neq T',\, |T'|\ge|T|\};$$
	\item \emph{complete}, if $\bigcup\T=G$;
	\item a \emph{tiling}, if $\T$ is disjoint and complete.
\end{itemize}
\end{defn}

Quasitilings allow to quickly estimate lower Banach density of certain sets:

\begin{prop}\label{ude} Let $G$ be amenable.
Fix two small (smaller than $1$) positive numbers $\eps$ and $\delta$. Let $\T$ be any $(1-\eps)$-covering (proper) quasitiling. In each tile $T$ of $\T$ select a $(1-\delta)$-subset $T'$ so that the sets $T'$ are mutually disjoint. Then $\underline d_B(\bigcup_{T\in\T}T')\ge(1-\delta)\cdot(1-\eps)$. 
\end{prop}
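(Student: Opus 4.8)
The plan is to reduce the lower Banach density estimate to a statement about Følner averages, using the fact that the tiles of a proper quasitiling are finite and come in only finitely many shapes. Fix a Følner sequence $\F=(F_n)_{n\in\N}$ in $G$. By formula~\eqref{dban_folner}, it suffices to show that for every $\eta>0$ there is an $N$ such that for all $n\ge N$ and all $g\in G$,
$$
\frac{|(\bigcup_{T\in\T}T')\cap F_ng|}{|F_n|}\ge(1-\delta)(1-\eps)-\eta.
$$
Since the sets $T'$ are mutually disjoint and each $T'$ is a $(1-\delta)$-subset of the corresponding tile $T$, we have $|(\bigcup T')\cap F_ng|\ge(1-\delta)\sum_{T}|T\cap F_ng|-(\text{error from tiles sticking out of }F_ng)$; the honest way to organize this is to count, tile by tile, how much of each tile lies inside the window and then discard a $\delta$-fraction of each such tile.

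The key step is the standard ``boundary tiles are negligible'' argument. Let $R=\max_{S\in\S}\max_{s\in S}\ldots$ — more precisely, since $|\S|<\infty$, the set $M:=\bigcup_{S\in\S}S^{-1}S$ (or any finite set containing all shapes and their inverses) is finite. Call a tile $T$ of $\T$ a \emph{boundary tile} (with respect to the window $F_ng$) if $T$ meets both $F_ng$ and its complement; every boundary tile is then contained in a fixed bounded neighborhood of $\partial(F_ng)$, and because the $T'$ are disjoint and each has positive size bounded below (finitely many shapes!), the total measure $|\,\bigcup\{T'\ :\ T\text{ a boundary tile}\}\cap F_ng|$ is at most a constant times $|MF_ng\triangle F_ng|$, which by the Følner property is $<\eta|F_n|$ for $n$ large, uniformly in $g$ (the Følner estimate is translation-invariant on the right since the symmetric difference is right-translated along with $F_n$). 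For the remaining (\emph{interior}) tiles $T$, which are entirely contained in $F_ng$, we have $|T'\cap F_ng|=|T'|\ge(1-\delta)|T|$, and these $T$'s cover at least the part of $\bigcup\T$ that lies in the $M$-core of $F_ng$. Since $\bigcup\T$ is $(1-\eps)$-covering, $|\bigcup\T\cap F_ng|\ge(1-\eps)|F_n|-\eta|F_n|$ for large $n$ (again by~\eqref{dban_folner} and uniformity in $g$), and subtracting off the boundary contribution shows the interior tiles cover at least $(1-\eps-2\eta)|F_n|$ points of $F_ng$. Multiplying by $(1-\delta)$ and absorbing the $\eta$'s gives the bound.

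Putting it together: $|(\bigcup_{T\in\T}T')\cap F_ng|\ge(1-\delta)\big((1-\eps)|F_n|-O(\eta)|F_n|\big)$, hence the infimum over $g$ of the normalized count exceeds $(1-\delta)(1-\eps)-O(\eta)$ for all large $n$; letting $\eta\to0$ and invoking~\eqref{dban_folner} yields $\underline d_{\mathsf B}(\bigcup_{T\in\T}T')\ge(1-\delta)(1-\eps)$.

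The main obstacle I anticipate is bookkeeping the boundary-tile error \emph{uniformly in the translate $g$}: one must make sure the ``bounded neighborhood of the boundary'' and the lower bound on $|T'|$ depend only on $\S$ and $\delta$, not on $g$ or $n$, so that the Følner property can be applied once. The finiteness of $|\S|$ (properness of the quasitiling) is exactly what makes this work: it bounds both the diameters of the shapes and $\min_{S\in\S}|S|\cdot(1-\delta)$ from below, so there is no subtlety beyond a careful but routine counting.
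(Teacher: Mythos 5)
Your argument is correct and takes essentially the same route as the paper's proof: both pass to Følner averages via~\eqref{dban_folner}, restrict to tiles entirely contained in $F_ng$ (observing that a tile $T$ containing a point of the $U$-core of $F_ng$, where $U=\bigcup_{S\in\S}SS^{-1}$, lies wholly inside $F_ng$, so that boundary tiles contribute a uniformly small fraction), and then multiply the $(1-\eps)$-covering estimate by the factor $(1-\delta)$ coming from the disjoint $(1-\delta)$-subsets. One small correction: the relevant core is with respect to $\bigcup_{S\in\S}SS^{-1}$ rather than $\bigcup_{S\in\S}S^{-1}S$, since for $T=Sc$ with $f=s_1c\in T$ one has $T=Ss_1^{-1}f\subset SS^{-1}f$.
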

\begin{proof} Let $\F=(F_n)_{n\in\N}$ be a F{\o}lner sequence in $G$. Let $U=\bigcup_{S\in\S}SS^{-1}$, where $\S$ is the collection of shapes of $\T$. Fix a $\vartheta>0$ much smaller than $\eps$. By formula~\eqref{dban_folner}, for all sufficiently large $n$ and all $g\in G$, the intersection $F_ng\cap\bigcup\T$ is a $(1-\eps-\vartheta)$-subset of $F_ng$. By Proposition~\ref{ksets}~(2) for large enough $n$, the core $(F_n)_U$ is a $(1-\vartheta)$-subset of $F_n$. For $g\in G$ let $(F_ng)_{\T}=\bigcup\{T\in\T: T\subset F_ng\}$. It is easy to verify that $(F_ng)_{\T}\supset (F_ng)_U\cap\bigcup\T$, and hence $(F_ng)_{\T}$ is a $(1-\eps-2\vartheta)$-subset of $F_ng$. Let $(F_ng)_{\T'}:=\bigcup\{T': T\in\T,\, T\subset F_ng\}$. Since the sets $T'$ are mutually disjoint and for each $T$ we have $|T'|\ge (1-\delta)|T|$, we have $|(F_ng)_{\T'}|\ge (1-\delta)|(F_ng)_{\T}|$. Therefore,
$$\frac{|(F_ng)_{\T'}|}{|F_ng|}=\frac{|(F_ng)_{\T'}|}{|(F_ng)_{\T}|}\cdot\frac{|(F_ng)_{\T}|}{|F_ng|}\ge(1-\delta)\cdot(1-\eps-2\vartheta).$$
Since $\vartheta$ is arbitrarily small, the above estimate implies $$\underline d_B(\bigcup_{T\in\T}T')\ge(1-\delta)\cdot(1-\eps).$$
\end{proof}

In particular, it follows that if a quasitiling $\T$ is $\eps$-disjoint and $(1-\eps)$-covering then the associated disjoint quasitiling $\tilde\T$ is $(1-2\eps)$-covering (in such case $\delta=\eps$ and then $(1-\delta)\cdot (1-\eps)=(1-\eps)^2>1-2\eps$).
\medskip

A quasitiling $\T$ can be represented as a symbolic element over the alphabet $\S_0=\S\cup\{0\}$, where $0$ is an additional symbol. In this setting, $\T\in\S_0^G$ is given by
$$
\T(g) = 
\begin{cases}
S;& Sg \text{ is a tile}\\
0;& \text{ otherwise}.
\end{cases}
$$ 
By a \emph{dynamical quasitiling} we will mean any subshift $\TT$ with the alphabet $\S_0$ (if $\S$ is infinite, we equip $\S$ with the discrete topology and $\S_0$ with the topology of the one-point compactification of $\S$). A dynamical quasitiling $\TT$ is called \emph{proper}, $(1-\eps)$-\emph{covering}, \emph{disjoint}, $\eps$-\emph{disjoint}, \emph{strongly $\eps$-disjoint}, \emph{complete}, or a \emph{dynamical tiling}, if all its elements $\T\in\TT$ are proper, $(1-\eps)$-covering, disjoint, $\eps$-disjoint, strongly $\eps$-disjoint, complete, or tilings, respectively.

Unless clearly stated otherwise, every quasitiling and dynamical quasitiling considered in this paper will be proper by default. The only exception is the improper dynamical tiling $\hat\TT$ introduced in Lemma~\ref{quasitofull}.

From now on, we will use the notation $\tilde\T$ exclusively for disjoint (proper) quasitilings. The tilde will also be put over letters denoting other objects associated with disjoint quasitilings (such as a disjoint dynamical quasitiling $\tilde\TT$, the set of shapes $\tilde\S$, individual shapes $\tilde S\in\tilde\S$ or individual tiles $\tilde T\in\tilde\T$).
 
\subsection{Congruent systems of disjoint quasitilings}
A \sq\ of dynamical quasitilings $(\TT_n)_{n\in\N}$ gives rise to a countable \tl\ joining\footnote{By
a \emph{topological joining} of a sequence of dynamical systems $(X_k,G)$, $k\in\N$, denoted by $\bigvee_{k\in\N}X_k$, we mean any closed subset of the Cartesian product $\prod_{k\in\N}X_k$, which has full projections onto the coordinates $X_k$, $k\in\N$, and is invariant under the product action given by $g(x_1,x_2,\dots)=(g(x_1),g(x_2),\dots)$. We note that the symbol $\bigvee_{k\in\N}X_k$ refers to many possible topological joinings.} $\TTT=\bigvee_{n\in\N}\TT_n$ called a \emph{system of quasitilings}. There are many types of such systems, but in this paper we will use only one: F\o lner, congruent systems of disjoint quasitilings. 
\begin{defn}\label{csq}
By a \emph{congruent system of disjoint quasitilings} we mean a \tl\ joining $\tilde\TTT=\bigvee_{n\in\N}\tilde\TT_n$, where, for each $n\in\N$, $\tilde\TT_n$ is a disjoint, $(1-\eps_n)$-covering dynamical quasitiling with the set of shapes $\tilde\S_n$. The elements of $\tilde\TTT$ are \sq s $(\tilde\T_n)_{n\in\N}$ with $\tilde\T_n\in\tilde\TT_n$ for each $n\in\N$. Moreover, we require that
\begin{itemize}
	\item $\lim_{n\to\infty}\eps_n=0$ ($\tilde\TTT$ with this property is called \emph{eventually $1$-covering}),
  \item for each $(\tilde\T_n)_{n\in\N}\in\tilde\TTT$ and each $n\in\N$, 
	every tile of $\tilde\T_n$ is either a subset of some 
	tile of $\tilde\T_{n+1}$ or it is disjoint from all tiles of $\tilde\T_{n+1}$ (this 
	property is called \emph{congruency} of $\tilde\TTT$).
\end{itemize}
\end{defn}

\begin{defn}\label{primary}
Let $\tilde\TTT$ be a congruent system of disjoint quasitilings. For every $(\tilde\T_n)_{n\in \N}\in\tilde\TTT$, every $n\in\N$ and every tile $\tilde T\in\tilde\T_{n+1}$, by \emph{primary subtiles} $\Breve T$ of $\tilde T$ we mean the tiles of $\tilde\T_k$ with $1\le k\le n$, contained in $\tilde T$ and not contained in any tile of $\tilde\T_{l}$ with $k<l\le n$. %We denote the collection of primary subtiles of $\tilde T$ by $\mathsf{Pr}(\tilde T)$ and their union by $U_{\mathsf{Pr}}(\tilde T)$.
\end{defn}

\begin{defn}\label{Fsq}
Let $\TTT=\bigvee_{n\in\N}\TT_n$ be a system of quasitilings. Let $\S_n$ denote the set of shapes of $\TT_n$. We will say that $\TTT$ is \emph{F\o lner} if $\bigcup_{n\in\N}\S_n$ arranged in a sequence is a F\o lner sequence in $G$ (this definition applies only to amenable groups).
\end{defn}

\begin{defn}\label{gp}
Let $\TTT=\bigvee_{n\in\N}\TT_n$ be a system of quasitilings. A \sq\ $(\tilde\T_n)_{n\in \N}\in\tilde\TTT$ is said to be \emph{in general position} if $\bigcup_{n\in\N}\tilde T^{(n)}=G$, where $\tilde T^{(n)}$ is the ``central'' tile of $\tilde\T_n$, i.e., the tile containing~$e$.
\end{defn}

In \cite[Lemma~B.4]{DOWZ} it is proved that if $\tilde\TTT$ is a F\o lner congruent system of disjoint quasitilings then the set $\tilde\TTT_{\mathsf{GP}}$, consisting of the \sq s $(\tilde\T_n)_{n\in \N}\in\tilde\TTT$ which are in general position, has full measure for every \im\ on $\tilde\TTT$. Although that lemma deals with systems of tilings rather than quasitilings, the proof passes with almost no modification.

\subsection{Quasitiling factors of free \tl\ systems}
From now on we will always assume that the group $G$ is amenable and we will fix a F{\o}lner sequence $\F=(F_n)_{n\in\N}$ in $G$ which is 
\begin{itemize}
	\item \emph{symmetric}, i.e., for all $n\ge 1$, $F_n=F_n^{-1}$,
	\item \emph{nested}, i.e., for all $n\ge 1$, $F_n\subset F_{n+1}$, and
	\item \emph{centered}, i.e., for all $n\ge 1$, $e\in F_n$,
	\item \emph{exhaustive}, i.e., $\bigcup_{n\ge 1}F_n = G$.
\end{itemize}
The existence of such a F\o lner \sq\ in any countable amenable group is proved in \cite[Corollary~5.3]{N}.
We will need the following results from \cite{DH}. 
\begin{lem}\label{qtilingfactor}\cite[Lemma~3.4 and Corollary~3.5]{DH} 
\phantom{.} 
\begin{itemize}
	\item[(a)] There exists a universal function\footnote{In fact, $r_\eps$ can be defined as the smallest integer $r$ such that $(1-\eps)^r<\eps$, see \cite{DHZ}.} $\eps\mapsto r_\eps$ from $(0,1)$ to $\N$ such that whenever $(Y,G)$ is a free action of a countable amenable group on a zero-dimensional compact space then, for every $\eps>0$ and $n_0\in\N$, there exists a \tl\ factor map $\phi:(Y,G)\to(\TT,G)$, where $(\TT,G)$ is a dynamical $(1-\eps)$-covering, strongly $\eps$-disjoint\footnote{All three papers, \cite{OW}, \cite{DHZ} and \cite{DH}, use the notion of $\eps$-disjointness, but it can be verified that the $\eps$-disjoint quasitilings constructed in these papers are in fact strongly $\eps$-disjoint.} 
quasitiling with shapes $S_1=F_{n_1},S_2=F_{n_2},\dots,S_{r_\eps}=F_{n_{r_\eps}}$, where $n_0<n_1<n_2<\cdots<n_{r_\eps}$.\footnote{The quasitiling $(\TT,G)$ which is a factor of $(Y,G)$ can be thought of as a collection of $r_\eps$ $\eps$-disjoint towers (with clopen levels) in $Y$ whose union covers a set of measure at least $1-\eps$ for every \im. A point $y\in Y$ belongs to the base of the $i$th tower if and only if the quasitiling $\T_y\in\TT$ associated to $y$ has at $e$ the center of a tile with shape $S_i$.} 
  \item[(b)]	Moreover, there exists a $(1-\eps)$-covering disjoint dynamical quasitiling $(\tilde\TT,G)$ such that 
	\begin{enumerate}
		\item[\rm(i)] $(\tilde\TT,G)$ is a \tl\ factor of $(\TT,G)$,\footnote{The fact that $(\tilde\TT,G)$ is a \tl\ factor of $(\TT,G)$ is implicit in the last paragraph of the proof of \cite[Corollary~3.5]{DH}.} via a map $\tilde\phi$, 
		\item[\rm(ii)] for each $\T\in\TT$ and its image $\tilde\T=\tilde\phi(\T)\in\tilde\TT$, every tile $\tilde T$ of $\tilde\T$ is a $(1-\eps)$-subset of a tile $T$ of $\T$.
		\end{enumerate}
\end{itemize}
\end{lem}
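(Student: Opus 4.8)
The statement is, in essence, the Ornstein--Weiss quasitiling theorem cast in the dynamical, zero-dimensional setting, so the plan is to reconstruct that argument. The combinatorial engine is the following: let $r_\eps$ be the least integer $r$ with $(1-\eps)^r<\eps$, and recall that if $E_1\subset E_2\subset\cdots\subset E_{r_\eps}$ are finite subsets of $G$ with each $E_{i+1}$ sufficiently invariant relative to the previous ones, then $G$ admits an $\eps$-disjoint, $(1-\eps)$-covering quasitiling whose shapes are exactly $E_1,\dots,E_{r_\eps}$; moreover the standard greedy construction that produces it (pack translates of the largest shape first, then fill the still-uncovered region with translates of the next shape, and so on through $r_\eps$ stages) yields in fact a \emph{strongly} $\eps$-disjoint quasitiling, because each tile is placed so as to overlap the union of all tiles placed before it --- all of cardinality at least its own --- in at most an $\eps$-fraction. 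Since $\F=(F_n)$ is nested and centered, for any prescribed level of invariance we may choose indices $n_0<n_1<\cdots<n_{r_\eps}$ so that the sets $S_i:=F_{n_i}$ meet the requirements, giving the shape list demanded in the lemma.

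To make this construction dynamical and obtain a topological factor map $\phi\colon(Y,G)\to(\TT,G)$, one replaces ``choosing a center in $G$'' by ``choosing a clopen marker set in $Y$''. Because $(Y,G)$ is free and $Y$ is compact and zero-dimensional, a clopen marker lemma is available: for every finite $K\subset G$ there is a clopen set $M\subset Y$ such that the translates $gM$ $(g\in K)$ are pairwise disjoint and $\bigcup_{g\in L}gM=Y$ for some finite $L\subset G$. Running the $r_\eps$-stage greedy packing with marker sets in place of arbitrary centers --- at stage $i$ one selects, inside the clopen region not yet covered by the (clopen) tiles of stages $>i$, a suitable marker set whose $S_i$-translates serve as the stage-$i$ tiles --- produces, for each $y\in Y$, a quasitiling $\T_y$ of $G$ with shapes among $S_1,\dots,S_{r_\eps}$; the assignment $y\mapsto\T_y$ is continuous and $G$-equivariant because every tile meeting a fixed finite window of $G$ is determined by finitely many of the clopen data. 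Its image closure $\TT$ is the desired dynamical quasitiling, and the $(1-\eps)$-covering and strong $\eps$-disjointness estimates, being uniform in $y$ by the Ornstein--Weiss bounds (and expressed via the F\o lner formula \eqref{dban_folner} for lower Banach density), pass to every element of $\TT$. This equivariant, continuous implementation of the greedy choices --- where freeness and zero-dimensionality are indispensable, and which is precisely \cite[Lemma~3.4]{DH} --- is the technical heart of the argument, and I expect it to be the main obstacle; the remainder is bookkeeping.

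For part (b) one disjointifies $\TT$. Given $\T\in\TT$, for each tile $T$ of $\T$ put $\tilde T=T\setminus\bigcup\{T'\in\T: T'\neq T,\ |T'|\ge|T|\}$ and set $\tilde\T=\{\tilde T:T\in\T\}$ with $c_{\tilde T}:=c_T$. The tiles of $\T$ meeting any fixed finite subset of $G$ form a finite family depending only on finitely many coordinates of $\T$ (finitely many shapes, of bounded diameter), so $\T\mapsto\tilde\T$ is continuous and equivariant; this is the factor map $\tilde\phi$, and $\tilde\TT:=\tilde\phi(\TT)$ is a disjoint dynamical quasitiling. Strong $\eps$-disjointness of $\TT$ gives $c_T\in\tilde T$ and $|\tilde T|\ge(1-\eps)|T|$, which is property (ii), and disjointness is immediate from the definition of $\tilde T$. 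Finally, Proposition~\ref{ude}, applied to the $(1-\eps)$-covering $\T$ with the $(1-\eps)$-subsets $\tilde T$, yields $\underline d_{\mathsf B}(\bigcup\tilde\T)\ge(1-\eps)^2$; to obtain literally a $(1-\eps)$-covering disjoint quasitiling one simply runs part (a) from the outset with $\eps$ replaced by a slightly smaller constant (for instance $\eps/2$), so that $(1-\eps/2)^2>1-\eps$. This completes the plan.
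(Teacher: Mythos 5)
The statement is a citation of \cite[Lemma~3.4 and Corollary~3.5]{DH}; the paper itself supplies no proof, only footnotes observing that the $\eps$-disjoint quasitilings constructed in \cite{OW}, \cite{DHZ}, \cite{DH} ``can be verified'' to be strongly $\eps$-disjoint, and that the factor-map structure of the disjointification is implicit in the last paragraph of the proof of \cite[Corollary~3.5]{DH}. Your reconstruction correctly identifies the two structural ingredients: the $r_\eps$-stage Ornstein--Weiss greedy packing, made dynamical by clopen markers on the free zero-dimensional system, for part~(a); and the shift-equivariant shrinking $\tilde T = T\setminus\bigcup\{T':T'\neq T,\ |T'|\ge|T|\}$ for part~(b). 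So this is the right approach, and in outline it matches the cited source.

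Two points in your write-up need repair. First, your justification of \emph{strong} $\eps$-disjointness only controls a tile's overlap with tiles placed \emph{before} it; but $\tilde T$ must also avoid tiles of equal size placed \emph{after} $T$, and nothing in the placement rule as you stated it bounds those overlaps (they could in principle accumulate over many later tiles). The standard constructions close this gap by arranging that tiles of the same shape within a stage are pairwise \emph{disjoint} --- one selects a maximal disjoint (not merely $\eps$-disjoint) subfamily of admissible translates --- and only tiles of strictly different sizes are allowed to overlap, with the $\eps$-bound. Once you say that, your argument does yield strong $\eps$-disjointness. Second, that same observation removes the need for your $\eps/2$ trick in part~(b), which as you note would silently change the shape count from $r_\eps$ to $r_{\eps/2}$: if same-shape tiles are pairwise disjoint, then every $g\in\bigcup\T$ belongs to a unique largest tile containing it and hence to the corresponding $\tilde T$, so $\bigcup\tilde\T=\bigcup\T$ and the $(1-\eps)$-covering is inherited \emph{exactly}, rather than being degraded to $(1-\eps)^2$ as your appeal to Proposition~\ref{ude} would give. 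This is presumably how \cite[Corollary~3.5]{DH} obtains the literal $(1-\eps)$-covering stated in the lemma.
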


The following supplemental observation is not included in \cite{DH}.

\begin{lem}\label{sepcen}Fix an $\eps\in(0,\frac13)$ and let $K$ be a finite subset of $G$. If $\TT$ is a strongly $\eps$-disjoint dynamical quasitiling whose collection of shapes $\S$ is linearly ordered by inclusion (i.e., $\S=\{S_1,S_2,\dots,S_r\}$ where $S_1\subset S_2\subset\cdots\subset S_r$), and all shapes are symmetric and $(K^{-1}K,\eps)$-\inv\ then, for each $\T\in\TT$, the set $C(\T)$ of centers of the tiles of $\T$ is $K$-separated.
\end{lem}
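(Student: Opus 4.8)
The plan is to argue by contradiction, turning an overlap of the ``$K$-neighbourhoods'' of two centers into a large overlap of the corresponding tiles, which is exactly what strong $\eps$-disjointness forbids. So fix $\T\in\TT$ and suppose there are distinct $c,c'\in C(\T)$ with $Kc\cap Kc'\neq\varnothing$. Choosing $k_1,k_2\in K$ with $k_1c=k_2c'$ gives $c=gc'$ with $g:=k_1^{-1}k_2\in K^{-1}K$. Let $T,T'$ be the tiles of $\T$ centered at $c,c'$, with shapes $S=Tc^{-1}$ and $S'=T'(c')^{-1}$; note $T\neq T'$ since $c\neq c'$. Because the shapes form a chain under inclusion, after possibly swapping the roles of $c$ and $c'$ we may assume $|T|\le|T'|$, and hence $S\subseteq S'$.

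The key estimate is a lower bound on $|T\cap T'|$. From $c=gc'$ we get $T=Sc=(Sg)c'$, so $T\cap T'=(Sg\cap S')c'$ and therefore $|T\cap T'|=|Sg\cap S'|\ge|Sg\cap S|$, the last inequality using $S\subseteq S'$. Now apply the inversion bijection of $G$ together with $S^{-1}=S$: this turns the right translate into a left one, $|Sg\cap S|=|g^{-1}S\cap S|$. Since $K^{-1}K$ is symmetric, $g^{-1}\in K^{-1}K$, so Proposition~\ref{ksets}(1) — applied with $K^{-1}K$ (which contains $e$) playing the role of ``$K$'' — shows that $S$ is $(\{g^{-1}\},2\eps)$-\inv, whence $|g^{-1}S\cap S|>(1-2\eps)|S|$. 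Putting this together, $|T\cap T'|>(1-2\eps)|T|$, and since $\eps<\tfrac13$ we have $1-2\eps>\eps$, so $|T\cap T'|>\eps|T|$.

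To finish, recall that strong $\eps$-disjointness provides $\tilde T=T\setminus\bigcup\{T''\in\T:T''\neq T,\ |T''|\ge|T|\}$ with $|\tilde T|\ge(1-\eps)|T|$. As $T'\neq T$ and $|T'|\ge|T|$, the tile $T'$ is among those subtracted, so $\tilde T\subseteq T\setminus T'$ and thus $|\tilde T|\le|T|-|T\cap T'|<(1-\eps)|T|$ — a contradiction. Hence no such pair $c,c'$ exists and $C(\T)$ is $K$-separated.

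I expect the only mildly delicate point to be the symmetrization step that trades the right translate $Sg$ for the left translate $g^{-1}S$: this is the sole place the hypotheses ``all shapes symmetric'' (and symmetry of $K^{-1}K$, which is automatic) are actually used, and it is what makes Proposition~\ref{ksets}(1) applicable. The constraint $\eps<\tfrac13$ enters only through the elementary inequality $1-2\eps>\eps$; everything else is routine bookkeeping with finite sets.
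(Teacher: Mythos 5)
Your proof is correct and follows essentially the same line as the paper's: argue by contradiction from two non-$K$-apart centers, translate the overlap $|T\cap T'|$ to a set of the form $|g^{-1}S\cap S|$ via the symmetry of the shape, bound it below by $(1-2\eps)|S|$ using Proposition~\ref{ksets}(1), and contradict strong $\eps$-disjointness. The only cosmetic difference is that the paper splits into a ``same shape'' and a ``different shapes'' case, while you unify them by normalizing $|T|\le|T'|$ so that $S\subseteq S'$ — a small streamlining of the identical argument.
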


\begin{proof}Fix some $\T\in\TT$ and first consider two tiles of $\T$ with the same shape $S=S_i$, $i\in\{1,2,\dots,r\}$, $T=Sc$ and $T'=Sc'$, and suppose that $c,c'$ are not $K$-apart. Then $gc=g'c'$ for some $g,g'\in K$. We have $Sc'=S(g')^{-1}gc$ and $|Sc'\cap Sc|=|S(g')^{-1}g\cap S|=|g^{-1}g'S^{-1}\cap S^{-1}|$. By symmetry of $S$, the latter cardinality equals $|g^{-1}g'S\cap S|$, which, by $(K^{-1}K,\eps)$-invariance of $S$, is at least $(1-2\eps)|S|$ (see Proposition \ref{ksets} (1)), which is larger than $\eps|S|$. This is a contradiction with strong $\eps$-disjointness of $\T$. Now suppose that the tiles $T$ and $T'$ have different shapes $S\subsetneq S'$, respectively, and that their respective centers $c,c'$ are not $K$-apart. Arguing as before, we get that $|Sc\cap Sc'|>\eps|S|$. Then also $|Sc\cap S'c'|>\eps|S|$, and hence $|T\setminus T'|<(1-\eps)|T|$, which again contradicts strong $\eps$-disjointness.
\end{proof}

Using Lemma~\ref{sepcen} we can improve the disjoint dynamical quasitiling $\tilde\TT=\tilde\phi(\TT)$ so that it has two additional convenient properties. 

\begin{lem}\label{Fintildas}
Fix an $\eps\in(0,\frac13)$ and let $K\subset G$ be a finite set containing $e$. Let $\TT$ be the dynamical quasitiling described in Lemma~\ref{qtilingfactor} (a). If $n_0$ is large enough then there exists a disjoint, $(1-\eps)$-covering dynamical quasitiling $(\tilde\TT',G)$ such that
\begin{enumerate}
	\item[\rm(i)] $(\tilde\TT',G)$ is a \tl\ factor of $(\TT,G)$ via a map $\tilde\phi'$,
	\item[\rm(ii)] for each $\T\in\TT$ and its image $\tilde\T'=\tilde\phi'(\T)\in\tilde\TT'$, every tile $\tilde T'$ of $\tilde\T'$ is a $(1-2\eps)$-subset of a tile $T$ of $\T$,
	\item[\rm(iii)]$c_{\tilde T'}=c_T$, and 
	\item[\rm(iv)] $Kc_T\subset\tilde T'$.
\end{enumerate}
\end{lem}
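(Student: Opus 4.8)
The plan is to start from the strongly $\eps$-disjoint quasitiling $\TT$ with shapes $S_1=F_{n_1}\subset\cdots\subset S_{r_\eps}=F_{n_{r_\eps}}$ from Lemma~\ref{qtilingfactor}~(a), and first arrange that $n_0$ is chosen large enough so that all these shapes are symmetric (they already are, being $F_{n_i}$'s) and $(K^{-1}K,\eps)$-invariant; this is possible since $\F$ is a F\o lner sequence, so all $F_n$ with $n$ large are $(K^{-1}K,\eps)$-invariant, and we only need $n_0<n_1<\cdots<n_{r_\eps}$. With this choice, Lemma~\ref{sepcen} applies and tells us that for every $\T\in\TT$ the set of centers $C(\T)$ is $K$-separated. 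The point of this is that the ``enlarged cores'' $Kc_T$ for distinct tiles $T$ of $\T$ are then pairwise disjoint, and in fact, by Proposition~\ref{aux}-type reasoning (or directly), each $Kc_T$ is contained in the corresponding tile $T$ provided the shape is $(K^{-1}K,\eps)$-invariant and $K$-separation holds — more carefully, $Kc_T\subset T=Sc_T$ means $K\subset S$, which we can also guarantee by taking $n_0$ large enough that $K\subset F_{n_1}=S_1\subset\cdots\subset S_{r_\eps}$.

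Next I would define the new quasitiling $\tilde\TT'$ tilewise. For each $\T\in\TT$ and each tile $T=Sc_T$ of $\T$, set
$$
\tilde T'=\bigl(T\setminus\textstyle\bigcup\{T''\in\T:T''\neq T,\ |T''|\ge|T|\}\bigr)\cup Kc_T.
$$
In other words, start from the strongly $\eps$-disjoint piece $\tilde T$ (which is a $(1-\eps)$-subset of $T$ containing $c_T$) and throw the set $Kc_T$ back in. By construction $c_{\tilde T'}=c_T$, giving (iii), and $Kc_T\subset\tilde T'$, giving (iv). For (ii), $\tilde T'\subset T$ since $\tilde T\subset T$ and $Kc_T\subset T$ (arranged above), and $|\tilde T'|\ge|\tilde T|\ge(1-\eps)|T|\ge(1-2\eps)|T|$, so $\tilde T'$ is even a $(1-\eps)$-subset, a fortiori a $(1-2\eps)$-subset. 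The map $\tilde\phi'$ is the obvious one sending $\T\mapsto\tilde\T'=\{\tilde T':T\in\T\}$ (recorded as an element of $(\S_0)^G$); it is clearly shift-equivariant, and it is continuous because whether $g\in G$ is a center of a tile of $\tilde\T'$ and of which shape depends only on $\T$ restricted to a fixed finite window (determined by the finitely many shapes and by $K$), so $\tilde\phi'$ is determined by a sliding-block code, hence continuous. Thus (i) holds, and $(\tilde\TT',G)$ is a \tl\ factor of $(\TT,G)$.

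The one genuinely substantive point — and the step I expect to be the main obstacle — is \textbf{disjointness} of $\tilde\TT'$: we have enlarged each $\tilde T$ by $Kc_T$, so we must check that these enlargements do not create overlaps. Suppose $\tilde T_1'\cap\tilde T_2'\neq\varnothing$ for two distinct tiles $T_1,T_2$ of the same $\T$. Since the $\tilde T_i$ are pairwise disjoint (strong $\eps$-disjointness), any new intersection must involve one of the added sets, say $Kc_{T_1}$ meets $\tilde T_2'$. If it meets $Kc_{T_2}$, then $c_{T_1}$ and $c_{T_2}$ fail to be $K$-apart, contradicting that $C(\T)$ is $K$-separated (here I use $K$-separated in the strong sense $Kc_{T_1}\cap Kc_{T_2}=\varnothing$; note that Lemma~\ref{sepcen}'s conclusion gives $K$-apartness of centers, i.e. exactly $KK^{-1}$-type separation — one must be slightly careful about which of $Kc_i$, $K^{-1}Kc_i$ appears, and it may be cleanest to run Lemma~\ref{sepcen} with $K$ replaced by a symmetric set containing $K$, e.g. $K\cup K^{-1}$, so that $K$-apartness of the centers yields $Kc_1\cap Kc_2=\varnothing$). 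If instead $Kc_{T_1}$ meets $\tilde T_2$ (the non-enlarged part), then since $Kc_{T_1}\subset T_1$ we get $T_1\cap\tilde T_2\neq\varnothing$; by the definition of strong $\eps$-disjointness, $\tilde T_2=T_2\setminus\bigcup\{T''\neq T_2:|T''|\ge|T_2|\}$ is disjoint from every tile of size $\ge|T_2|$, so we must have $|T_1|<|T_2|$. But then, symmetrically, consider the center $c_{T_1}$: we have $c_{T_1}\in T_1$ and $c_{T_1}\notin\tilde T_2$ because $\tilde T_2$ avoids $T_1$ entirely when... — here one closes the loop by observing $c_{T_1}\in\tilde T_1'$, $c_{T_1}\in Kc_{T_1}$, so if $c_{T_1}\in\tilde T_2'$ then it lies in $\tilde T_2$ or in $Kc_{T_2}$; the former is impossible as just noted ($c_{T_1}\in T_1$, $\tilde T_2\cap T_1=\varnothing$ when $|T_1|<|T_2|$... one needs $T_1\cap T_2$ could still be nonempty, so actually $\tilde T_2$ is carved out precisely to be disjoint from the larger-or-equal tiles, and $T_1$ is strictly smaller, so this needs the symmetric argument with roles swapped). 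The cleanest formulation: show directly that $x\in\tilde T_1'\cap\tilde T_2'$ forces, after possibly swapping indices, $x\in Kc_{T_1}\subset T_1$ with $|T_1|\le|T_2|$, hence $x\notin\tilde T_2\supset$ nothing meeting $T_1$, so $x\in Kc_{T_2}$, and then $\varnothing\neq Kc_{T_1}\cap Kc_{T_2}$ contradicts $K$-separation of $C(\T)$. I would write this disjointness verification out carefully as the core of the proof; everything else (covering from (ii) plus Proposition~\ref{ude}, continuity, equivariance, properties (iii)--(iv)) is essentially bookkeeping.
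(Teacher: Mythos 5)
Your construction $\tilde T' = \tilde T \cup Kc_T$ does not yield disjoint tiles, and your own attempt to repair the disjointness argument fails at exactly the point you flag. If $Kc_{T_1}$ meets $\tilde T_2$ with $|T_1|<|T_2|$, strong $\eps$-disjointness removes from $T_2$ only the tiles of size $\ge |T_2|$, so $T_1$ (being strictly smaller) is \emph{not} carved out of $\tilde T_2$; the step ``$x\notin\tilde T_2$ since $\tilde T_2$ meets nothing of $T_1$'' is simply false. Thus the ``cleanest formulation'' at the end of your write-up does not close, and the disjointness claim for $\tilde\TT'$ is unproved. This is not a presentational defect: it is entirely possible for a smaller tile $T_1$ to sit inside $T_2$ in such a way that $Kc_{T_1}$ pokes into $\tilde T_2$, and nothing in $K$-separation of the centers or in strong $\eps$-disjointness rules this out.

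The missing idea, which the paper uses, is to \emph{first remove} the collars around all centers and \emph{then add back} the one you want: $\tilde T' = (\tilde T\setminus KC(\T))\cup Kc_T$, where $C(\T)$ is the full set of centers. Then disjointness is immediate: the sets $\tilde T\setminus KC(\T)$ are disjoint and each is disjoint from all $Kc$, while the $Kc_T$ are pairwise disjoint by $K$-separation of $C(\T)$. This removal costs something, which is why the conclusion is $(1-2\eps)$ rather than your claimed $(1-\eps)$; to control this loss one needs the density estimate $|KC(\T)\cap T|/|T|<\eps$, which the paper obtains from Proposition~\ref{aux} after enlarging $K$ to a symmetric set $K'$ with $|K|/|K'|$ small and taking $n_0$ large enough that the shapes are sufficiently $K'$-invariant. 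Your proposal has neither the subtraction nor the Proposition~\ref{aux} bound, and both are essential. The rest of your outline (choice of $n_0$, Lemma~\ref{sepcen} for separated centers, the factor-map/continuity bookkeeping, and properties (iii)--(iv)) does track the paper's reasoning.
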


\begin{proof}
Since the F\o lner \sq\ $\F$ is exhaustive, as soon as $n_0$ is sufficiently large, all shapes of $\TT$ contain $K$. Let $K'\supset K$ be a symmetric set so large that $\frac{|K|}{|K'|}<\frac\eps2$. If $n_0$ is large enough, all shapes of $\TT$ are $(K'^{-1}K',\eps)$-\inv, so, by Lemma~\ref{sepcen}, for each $\T\in\TT$ the set of centers $C(\T)$ is $K'$-separated. 
We will modify the disjoint quasitiling $\tilde\TT$ 
described in Lemma~\ref{qtilingfactor} (b) so that, in addition to conditions (i) and (ii), it will also satisfy conditions (iii) and (iv). 
If $n_0$ is large enough, all shapes of $\TT$ are $(K',
\delta)$-\inv\, where $\delta=\frac\eps{2|K'|^2}$, and then, by Proposition~\ref{aux}, for each $\T\in\TT$ and each tile $T$ of $\T$, we have
\begin{equation}\label{plp}
\frac{|KC(\T)\cap \tilde T|}{|T|}\le\frac{|KC(\T)\cap T|}{|T|}\le|K'|^2\delta +\frac{|K|}{|K'|}<\eps.
\end{equation}
Now we modify each tile $\tilde T$ of $\tilde\T\in\tilde\TT$ as follows:
$$
\tilde T'=(\tilde T\setminus KC(\T))\cup Kc_T,
$$
and set the center of $\tilde T'$ at $c_T$.
Note that since the tiles $\tilde T$ are pairwise disjoint and so are the sets $Kc_T$ (the set $C(\T)$ is $K'$-separated, hence also $K$-separated), the new tiles $\tilde T'$ are pairwise disjoint. We let $\tilde\T'$ be the disjoint quasitiling whose tiles are $\tilde T'$. Note that $\tilde\T'$ covers the same subset of the group as does $\tilde\T$, so $\tilde\T'$ is $(1-\eps)$-covering. The mapping $\tilde\T\mapsto\tilde\T'$ is a \tl\ factor map (because, roughly speaking, the creation of $\tilde T'$ from $\tilde T$ depends on the quasitiling $\tilde\T$ within a finite horizon around $\tilde T$) from $\tilde\TT$ to a dynamical quasitiling which we now denote by $\tilde\TT'$. The composition $\tilde\phi'$ of the above factor map with $\tilde\phi$ acts from $\TT$ to $\tilde\TT'$, so~(i) is fulfilled.

For each $\T\in\TT$ and each tile $T$ of $\T$ we have
$\tilde T\subset T$ and $Kc_T\subset T$, which implies that $\tilde T'\subset T$.  
Combining \eqref{plp} with the fact that $\tilde T$ is a $(1-\eps)$-subset of $T$, we find that $\tilde T'$ is a $(1-2\eps)$ subset of $T$ and, by definition, it has the same center as $T$, so conditions (ii) and (iii) are fulfilled. Condition (iv) is satisfied directly by the definition of~$\tilde T'$.
\end{proof}

Since $\tilde\TT'$ is just an improved version of $\tilde\TT$, to simplify the notation, from now on we will denote it by $\tilde\TT$. Likewise, the factor map $\tilde\phi'$ will be denoted by $\tilde\phi$. The set $K$ appearing in property (iv) will be specified later. 

\begin{lem}\label{lem_compatible}\cite[Lemma~3.6]{DH} 
	Let $Y$ be a compact zero-dimensional space on which $G$ acts freely by homeomorphisms. Choose $\eps_1>0$ and let $\tilde\psi_1:Y\to\tilde\TT_1$ be a topological factor map onto a disjoint $(1-\eps_1)$-covering dynamical quasitiling of $G$. For any finite set $F\subset G$ and any positive $\eps_2<\eps_1$, there exists a \tl\ factor map $\tilde\psi_2:Y\to\tilde\TT_2$, onto a disjoint, $(1-\eps_2)$-covering dynamical quasitiling, such that every shape of $\tilde\TT_2$ is $(F,\eps_2)$-invariant and for every $y\in Y$, every tile of $\tilde\psi_1(y)$ is either a subset of some tile of $\tilde\psi_2(y)$ or is disjoint from all tiles of $\tilde\psi_2(y)$.
\end{lem}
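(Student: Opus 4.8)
The plan is to build $\tilde\TT_2$ by laying down a fresh, much \emph{coarser} disjoint dynamical quasitiling on $Y$ — one whose shapes are enormous F\o lner sets and whose centers lie deep inside their tiles — and then \emph{rounding} each of its tiles so as to respect the fine grid $\tilde\TT_1$: from a coarse tile we delete exactly those tiles of $\tilde\TT_1$ that straddle its boundary. Because the coarse shapes are huge while the shapes of $\tilde\TT_1$ are uniformly bounded, each such deletion removes only a vanishing fraction of the coarse tile, so both the covering ratio and the (approximate) F\o lner-invariance of the shapes survive; and since the rounding of a coarse tile consults $\tilde\TT_1$ only inside a bounded window around it, the resulting assignment $y\mapsto\tilde\psi_2(y)$ is a topological factor map.

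In detail, first I would fix a finite symmetric set $D\ni e$ containing every shape of $\tilde\TT_1$, so that a tile of any $\tilde\T_1\in\tilde\TT_1$ meeting a set $A$ lies inside $DA$ and has its center in $DA$. Then, choosing a small $\eta>0$ (how small depends only on $|D|$, $|F|$, $\eps_1$, $\eps_2$), I would apply Lemma~\ref{qtilingfactor}(a) together with Lemma~\ref{Fintildas}, with $K=D^2$ and $n_0$ large enough that every $F_n$ with $n>n_0$ is $((D\cup D^{-1})^2\cup F,\eta)$-\inv; this yields a topological factor map $\phi_0:Y\to\tilde\TT_0$ onto a disjoint, $(1-\eta)$-covering, proper dynamical quasitiling in which each tile $T$ is a $(1-2\eta)$-subset of $F_{n_i}c_T$ for some $i\in\{1,\dots,r_\eta\}$ and, in addition, $D^2c_T\subset T$. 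Given $y$, write $\tilde\T_0=\phi_0(y)$, $\tilde\T_1=\tilde\psi_1(y)$; for each tile $T\in\tilde\T_0$ let $\mathcal B(T)=\{T_1\in\tilde\T_1:\ T_1\cap T\neq\varnothing,\ T_1\not\subset T\}$ and put $T^\ast=T\setminus\bigcup\mathcal B(T)$ with $c_{T^\ast}:=c_T$. The tile of $\tilde\T_1$ through $c_T$, if any, is contained in $D^2c_T\subset T$, so it is not in $\mathcal B(T)$ and $c_T\in T^\ast\neq\varnothing$; as distinct tiles of $\tilde\T_0$ are disjoint, so are the $T^\ast$, with distinct centers. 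Set $\tilde\psi_2(y)=\{T^\ast:T\in\tilde\T_0\}$ and $\tilde\TT_2=\tilde\psi_2(Y)$; since $T^\ast$ depends on $\tilde\T_1$ only inside $DT$, the map $\tilde\psi_2$ is continuous and equivariant, hence a topological factor map onto the compact invariant dynamical quasitiling $\tilde\TT_2$.

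It then remains to check four things. Congruency: given $T_1\in\tilde\T_1$ and $T\in\tilde\T_0$, either $T_1\subset T$ (then $T_1\subset T^\ast$, since $T_1\notin\mathcal B(T)$ and $\tilde\T_1$ is disjoint), or $T_1\cap T=\varnothing$ (then $T_1\cap T^\ast=\varnothing$, as $T^\ast\subset T$), or $T_1\in\mathcal B(T)$ (then $T_1\cap T^\ast=\varnothing$ by construction); since $T_1$ cannot lie inside two disjoint tiles of $\tilde\T_0$, it sits inside exactly one $T^\ast$ or meets none. Disjointness was noted. Covering: the centers of the tiles in $\mathcal B(T)$ lie in $DT\setminus T_{D^2}$, a set of size $O_D(\eta)\,|T|$ by the near-F\o lner-invariance of the shape of $T$, so $T^\ast$ is a $(1-O_D(\eta))$-subset of $T$, and Proposition~\ref{ude} applied to the disjoint $(1-\eta)$-covering $\tilde\T_0$ yields $\underline d_B(\bigcup_TT^\ast)\ge(1-O_D(\eta))(1-\eta)>1-\eps_2$ for $\eta$ small. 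Shapes: the shape $S^\ast$ of $T^\ast$ is a subset of $F_{n_i}$ differing from it by at most $O_D(\eta)\,|F_{n_i}|$ points; since $F_{n_i}$ is $(F,\eta)$-\inv\ and $|S^\ast|\ge\tfrac12|F_{n_i}|$, a routine boundary estimate gives $|FS^\ast\triangle S^\ast|\le\eps_2|S^\ast|$ for $\eta$ small, and there are only finitely many such shapes (each a subset of the finite set $\bigcup_iF_{n_i}$), so $\tilde\TT_2$ is proper.

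The step I expect to be the main obstacle is reconciling two opposing requirements: because $\eps_2<\eps_1$, the new quasitiling $\tilde\TT_2$ must cover \emph{more} of $G$ than $\tilde\TT_1$ does, so its tiles cannot merely be unions of $\tilde\TT_1$-tiles, yet $\tilde\TT_2$ must still be congruent with $\tilde\TT_1$. The way out is to let a $\tilde\TT_2$-tile absorb the uncovered ``gap'' regions of $\tilde\TT_1$ that fall inside a coarse tile, excising only those $\tilde\TT_1$-tiles that cross the coarse boundary; ensuring that excision is harmless is exactly what forces the coarse shapes to be highly F\o lner and the coarse centers to sit deep inside their tiles — the content of Lemma~\ref{Fintildas} invoked with $K=D^2$ — after which everything reduces to standard estimates on the boundary of a F\o lner set.
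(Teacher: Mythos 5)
The paper does not prove this lemma; it is cited verbatim from \cite[Lemma~3.6]{DH}, so there is no in-paper argument to compare against. Assessed on its own merits, your proof is sound and, as far as I can tell, follows the same basic strategy as the one in \cite{DH}: produce an independent, much coarser disjoint dynamical quasitiling with deeply buried centers (via Lemma~\ref{qtilingfactor}(a) and Lemma~\ref{Fintildas} with $K=D^2$), and then shave from each coarse tile exactly those $\tilde\TT_1$-tiles that straddle its boundary, obtaining a quasitiling whose tiles are subsets of F\o lner sets and hence inherit the required invariance and covering bounds. The congruency check, the disjointness, the properness (shapes are among the finitely many subsets of $\bigcup_i F_{n_i}$), and the factor-map property (boundedness of the window needed to compute $T^\ast$) are all handled correctly, and you identify accurately the genuine subtlety — that the new tiles must \emph{absorb} the uncovered gaps of $\tilde\TT_1$ rather than be unions of $\tilde\TT_1$-tiles, which is what forces the centers to sit deep inside via Lemma~\ref{Fintildas}(iv).

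Two small cosmetic imprecisions, neither of which affects the argument: a tile of $\tilde\T_1$ meeting a set $A$ has its center in $DA$ but is itself only guaranteed to lie in $D^2A$, not $DA$ (you in fact use $D^2$ everywhere it matters, e.g.\ for $K=D^2$ and for the continuity window, so this is harmless); and the invocation of Lemma~\ref{qtilingfactor}(a) together with Lemma~\ref{Fintildas} is redundant, since Lemma~\ref{Fintildas} already presupposes and upgrades the output of Lemma~\ref{qtilingfactor}(a).
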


Combining Lemmas \ref{qtilingfactor} and \ref{lem_compatible} in a straightforward inductive argument, we obtain the following corollary:

\begin{cor}\label{cor_quasitiling}
Let $Y$ be a compact zero-dimensional space on which $G$ acts freely by homeomorphisms. There exists a topological factor map $\tilde\Phi:(Y,G)\to (\tilde\TTT,G)$, 
where $(\tilde\TTT,G)$ is a F\o lner, congruent system of disjoint quasitilings. We can arrange that the first ``level'' $\tilde\TT_1$ of $\tilde\TTT$ is any a priori given disjoint dynamical quasitiling which is a factor of $Y$. Using \cite[Lemma~B.4]{DOWZ}, we obtain that the set $Y^*=\tilde\Phi^{-1}(\tilde{\TTT}_{\mathsf{GP}})$ (see comments below Definition~\ref{gp}) has measure $1$ for every \im\ on~$Y$.
\end{cor}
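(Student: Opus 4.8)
The plan is to construct $(\tilde\TTT,G)$ level by level, using Lemma~\ref{lem_compatible} to pass from level $n$ to level $n+1$ so that congruency holds automatically between consecutive levels, and choosing the parameters carefully so that the union of all shapes is a Følner sequence; the general-position conclusion is then extracted from \cite[Lemma~B.4]{DOWZ}. First I would fix a topological factor map $\tilde\psi_1:(Y,G)\to(\tilde\TT_1,G)$ onto the prescribed disjoint dynamical quasitiling (if no first level is prescribed, Lemma~\ref{qtilingfactor} together with Lemma~\ref{Fintildas} supplies one), choose a strictly decreasing sequence $\eps_1>\eps_2>\cdots\to0$ with $\tilde\TT_1$ being $(1-\eps_1)$-covering, and recall the fixed symmetric, nested, centered, exhaustive Følner sequence $\F=(F_n)_{n\in\N}$.

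For the inductive step, suppose a factor map $\tilde\psi_n:(Y,G)\to(\tilde\TT_n,G)$ onto a disjoint $(1-\eps_n)$-covering dynamical quasitiling has been built. I apply Lemma~\ref{lem_compatible} with input $\tilde\TT_n$, input parameter $\eps_n$, the finite set $F:=F_{n+1}$, and output parameter $\eps_{n+1}<\eps_n$. This yields a factor map $\tilde\psi_{n+1}:(Y,G)\to(\tilde\TT_{n+1},G)$ onto a disjoint $(1-\eps_{n+1})$-covering dynamical quasitiling, all of whose shapes are $(F_{n+1},\eps_{n+1})$-invariant, and such that for every $y\in Y$ each tile of $\tilde\psi_n(y)$ is either contained in a tile of $\tilde\psi_{n+1}(y)$ or disjoint from all of them.

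Having all the $\tilde\psi_n$, I would set $\tilde\Phi:Y\to\prod_{n\in\N}\tilde\TT_n$, $\tilde\Phi(y)=(\tilde\psi_n(y))_{n\in\N}$, and let $\tilde\TTT:=\tilde\Phi(Y)$. As the continuous equivariant image of the compact system $Y$, $\tilde\TTT$ is a closed $G$-invariant subset of the product; since $\pi_n\circ\tilde\Phi=\tilde\psi_n$ is onto, $\tilde\TTT$ has full projections, so $\tilde\TTT=\bigvee_n\tilde\TT_n$ is a system of disjoint quasitilings with $\tilde\Phi$ a factor map and $\tilde\TT_1$ its first level, as required. The tile-containment property of Lemma~\ref{lem_compatible} at every level is exactly congruency, and $\eps_n\to0$ gives the eventually-$1$-covering property. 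For the Følner property I list $\bigcup_n\tilde\S_n$ in a sequence level by level; given a finite $K\subset G$ and $\delta>0$, exhaustiveness of $\F$ gives $K\subset F_{n+1}$ for all large $n$, and then, since $e\in K\subset F_{n+1}$, every shape $S\in\tilde\S_{n+1}$ satisfies $S\subset KS\subset F_{n+1}S$, so $|KS\triangle S|\le|F_{n+1}S\triangle S|<\eps_{n+1}|S|<\delta|S|$ once also $\eps_{n+1}<\delta$; hence all but finitely many listed shapes are $(K,\delta)$-invariant, so the list is a Følner sequence. Finally, the quasitiling analogue of \cite[Lemma~B.4]{DOWZ} (valid by the remark following Definition~\ref{gp}) gives $\tilde\TTT_{\mathsf{GP}}$ full measure for every invariant measure on $\tilde\TTT$; pushing any invariant measure $\mu$ on $Y$ forward by $\tilde\Phi$ yields an invariant measure on $\tilde\TTT$, so $\mu(Y^*)=\tilde\Phi_*\mu(\tilde\TTT_{\mathsf{GP}})=1$ for $Y^*=\tilde\Phi^{-1}(\tilde\TTT_{\mathsf{GP}})$.

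The construction is routine once Lemmas~\ref{qtilingfactor} and~\ref{lem_compatible} are available; the only point that genuinely requires attention is the parameter bookkeeping forcing the concatenated family of shapes to be a Følner sequence, which I resolve precisely by feeding the $(n+1)$-st member of the fixed Følner sequence $\F$ into the $(n+1)$-st application of Lemma~\ref{lem_compatible} and invoking exhaustiveness of $\F$ together with $\eps_n\to0$.
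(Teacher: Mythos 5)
Your proposal is correct and takes essentially the same route the paper intends; the paper itself only says the corollary follows "by combining Lemmas~\ref{qtilingfactor} and~\ref{lem_compatible} in a straightforward inductive argument," and you have filled in exactly that induction, including the key bookkeeping step (feeding $F_{n+1}$ and $\eps_{n+1}\to0$ into Lemma~\ref{lem_compatible} and using exhaustiveness of $\F$ plus $e\in K\subset F_{n+1}$ to get $|KS\triangle S|\le|F_{n+1}S\triangle S|<\eps_{n+1}|S|$) that makes the concatenated shape family Følner. The final appeal to the quasitiling version of \cite[Lemma~B.4]{DOWZ} via pushforward of invariant measures is also exactly what the paper has in mind.
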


The next lemma is preparatory for Lemma \ref{quasitofull} in which we will factor a disjoint quasitiling $\tilde\TT$ onto an improper tiling $\hat\TT$ (i.e., with possibly infinitely many shapes).

\begin{lem}\label{comparison}
Let $(Z,\eta,G)$ be an ergodic $G$-system and let $A,B\subset Z$ be disjoint measurable sets satisfying $\eta(A)<\eta(B)$. Then there exists a countable measurable partition of $A$, $A=A_0\sqcup A_1\sqcup A_2\sqcup\cdots$ with $\eta(A_0)=0$, and elements $g_1,g_2,\dots$ of $G$ such that the sets $g_n(A_n)$, $n\ge1$, are disjoint and contained in $B$.
\end{lem}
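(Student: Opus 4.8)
The statement is an infinitesimal, non-measure-preserving version of Hall's marriage / exhaustion argument; the natural tool is a maximal (Zorn) construction combined with ergodicity to kill the residual set. The plan is as follows. First I would fix a countable family $\{g_n\}_{n\ge 1}$ enumerating $G$ and proceed greedily. Set $B^{(0)}=B$ and $A^{(0)}=A$. At step $n$, having already used portions $A_1,\dots,A_{n-1}$ of $A$ and filled part of $B$, let $A_n$ be the largest (up to null sets) measurable subset of the remaining part of $A$, $A\setminus(A_1\cup\cdots\cup A_{n-1})$, such that $g_n(A_n)$ is contained in the still-unused part of $B$, i.e.\ $g_n(A_n)\subset B\setminus\bigl(g_1(A_1)\cup\cdots\cup g_{n-1}(A_{n-1})\bigr)$; concretely, $A_n = \bigl(A\setminus(A_1\cup\cdots\cup A_{n-1})\bigr)\cap g_n^{-1}\bigl(B\setminus(g_1(A_1)\cup\cdots\cup g_{n-1}(A_{n-1}))\bigr)$. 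By construction the sets $g_n(A_n)$ are pairwise disjoint subsets of $B$, and the $A_n$ are pairwise disjoint subsets of $A$.

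Set $A_0 = A\setminus\bigcup_{n\ge 1}A_n$; it remains to show $\eta(A_0)=0$, which is where ergodicity enters and which I expect to be the main obstacle. The key observation is that $A_0$ is ``immovable'': for every $g\in G$, say $g=g_n$, the definition of $A_n$ forces $g_n(A_0)$ to be disjoint (mod $\eta$) from $B\setminus\bigcup_{k\ge 1}g_k(A_k)$, hence $g_n(A_0)\subset B^{(\infty)}:=B\setminus\bigcup_{k\ge1}g_k(A_k)$ only insofar as it lands in $B$ at all — more precisely, $g_n(A_0)\cap\bigl(B\setminus\bigcup_{k}g_k(A_k)\bigr)$ is exactly $g_n(A_0)\cap B$ minus a piece of $\bigcup_k g_k(A_k)$, and the maximality of $A_n$ says we cannot enlarge $A_n$ inside the leftover of $A$, i.e.\ $g_n\bigl(A_0\bigr)\cap B \subset \bigcup_{k<n} g_k(A_k)$ up to null sets is \emph{not} quite right; rather the correct conclusion is $g_n(A_0)\cap\bigl(B\setminus(g_1(A_1)\cup\cdots\cup g_{n-1}(A_{n-1}))\bigr)=\varnothing$ mod $\eta$, because any positive-measure part of $A_0$ mapped by $g_n$ into that set would have been absorbed into $A_n$. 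Therefore $g_n(A_0)\cap B\subset\bigcup_{k\ge1}g_k(A_k)$ mod $\eta$, for every $n$. Now suppose $\eta(A_0)>0$. Consider $C=\bigcup_{g\in G}g(A_0)$; it is $G$-invariant and has positive measure, so by ergodicity $\eta(C)=1$, hence $\eta\bigl(B\cap\bigcup_{g}g(A_0)\bigr)=\eta(B)>0$. On the other hand the previous paragraph gives $B\cap\bigcup_g g(A_0)\subset\bigcup_k g_k(A_k)$ mod $\eta$, so $\eta(B)\le\sum_k\eta(g_k(A_k))=\sum_k\eta(A_k)\le\eta(A)$, contradicting $\eta(A)<\eta(B)$.

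To make the contradiction airtight I would actually run the comparison with a margin: since $\eta(A)<\eta(B)$, pick $\theta>0$ with $\eta(A)<\eta(B)-\theta$, and note $\sum_{k\ge1}\eta(A_k)\le\eta(A)<\eta(B)-\theta$, so the ``filled'' part of $B$ has measure strictly less than $\eta(B)$, leaving an unused part $B^{(\infty)}$ of measure $\ge\theta>0$; then the immovability of $A_0$ plus ergodicity (applied to the invariant set $\bigcup_g g(A_0)$, which meets $B^{(\infty)}$ in a set that must be null by immovability, yet would have full measure if $\eta(A_0)>0$) forces $\eta(A_0)=0$. The only genuinely delicate point is the bookkeeping that turns ``$A_n$ was chosen maximal'' into the pointwise-a.e.\ statement ``no positive-measure subset of $A_0$ is carried by any $g\in G$ into the leftover of $B$''; this is a routine but careful mod-$\eta$ argument using that $G$ is countable, so only countably many null exceptional sets arise and their union is still null. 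Finally, the partition is $A = A_0\sqcup A_1\sqcup A_2\sqcup\cdots$ with $\eta(A_0)=0$ and the chosen group elements $g_1,g_2,\dots$, with $g_n(A_n)$ disjoint and contained in $B$, as required.
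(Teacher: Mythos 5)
Your greedy construction of the sets $A_n$ is exactly the one in the paper, and your derivation of $\eta(A_0)=0$ — observing that $g_n(A_0)$ misses $B\setminus\bigcup_{i<n}g_i(A_i)$ for every $n$, then invoking ergodicity on the invariant set $\bigcup_g g(A_0)$ against the positive-measure residue $B\setminus\bigcup_k g_k(A_k)$ — is the same argument the paper gives, just phrased as a measure inequality rather than by pointing to one offending index $n$. (Your mid-proof hedging about ``mod $\eta$'' is unnecessary: the disjointness $g_n(A_0)\cap\bigl(B\setminus\bigcup_{i<n}g_i(A_i)\bigr)=\varnothing$ holds pointwise straight from the definition of $A_n$, and the statement you first dismissed as ``not quite right'' is in fact correct and equivalent to the one you settled on.)
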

\begin{proof}Enumerate $G=\{g_0,g_1,g_2,\dots\}$ so that $g_0=e$. Let $A_1=A\cap g_1^{-1}(B)$. Next, define inductively
$$
A_n = \Bigl(A\setminus\bigcup_{i=1}^{n-1}A_i\Bigr)\ \cap \ g_n^{-1}\Bigl(B\setminus\bigcup_{i=1}^{n-1}g_i(A_i)\Bigr).
$$
It is clear that the sets $A_n$ are disjoint, contained in $A$, and that their images $g_n(A_n)$ are disjoint and contained in $B$. By preservation of the measure, the set $B_0=B\setminus\bigcup_{n\ge1}g_n(A_n)$ has positive measure, and hence is nonempty.

Suppose that the set $A_0=A\setminus\bigcup_{n\ge1}A_n$ has a positive measure. By ergodicity, there exists $n\ge1$ such that $g_n(A_0)\cap B_0$ has a positive measure. This is a contradiction, because in this case the set $g_n^{-1}(g_n(A_0)\cap B_0)$ should have been included in $A_n$, while it is contained in $A_0$, disjoint from $A_n$.
\end{proof}

\begin{lem}\label{quasitofull}Fix an $\eps\in(0,\frac13)$.
Let $(\tilde\TT,G)$ be a disjoint $(1-\eps)$-covering dynamical quasitiling of $G$ and let $\tilde\eta$ be an ergodic measure on $\tilde\TT$. Then there exists a dynamical improper tiling $(\hat{\TT},G)$ with an ergodic measure $\hat\eta$ such that:
\begin{enumerate}
	\item $(\hat\TT,\hat\eta,G)$ is a measure-theroetic factor of $(\tilde\TT,\tilde\eta,G)$ via a map $\hat\phi$,
	\item for $\tilde\eta$-almost every element $\tilde\T$ of $\tilde\TT$, the element $\hat\T=\hat\phi(\tilde\T)$ is a partition of $G$ into tiles, 
	each being a $2\eps$-enlargement of a tile of $\tilde\T$ (and with the same center).
\end{enumerate}
\end{lem}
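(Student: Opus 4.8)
The plan is to produce $\hat\T$ from $\tilde\T$ by \emph{absorbing} the uncovered set $W_{\tilde\T}:=G\setminus\bigcup\tilde\T$ into the existing tiles -- each tile $\tilde T$ being permitted to swallow at most $2\eps|\tilde T|$ uncovered points, so that it becomes a $2\eps$-enlargement of itself -- and to carry out this absorption equivariantly by means of Lemma~\ref{comparison}. For the budget $2\eps|\tilde T|$ to be usable the shapes must be large, so I will assume, as holds in the only application of this lemma and costs nothing since $\tilde\TT$ is built from a F\o lner sequence, that every shape $S$ of $\tilde\TT$ satisfies $|S|>3/\eps$.

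First I would fix, for every shape $S$, a ``reservoir'' $R(S)\subset S$ with $|R(S)|=\lfloor2\eps|S|\rfloor$ (say the first $\lfloor2\eps|S|\rfloor$ elements of $S$ in a fixed enumeration of $G$) and, for a tile $T=Sc$ of any $\tilde\T\in\tilde\TT$, put $R(T):=R(S)c\subset T$; this assignment is shift-equivariant. In the ergodic system $(\tilde\TT,\tilde\eta,G)$ consider the disjoint clopen sets $A$ (consisting of those $\tilde\T$ with $e\in W_{\tilde\T}$) and $B$ (consisting of those $\tilde\T$ for which $e\in R(T)$, where $T$ is the tile of $\tilde\T$ containing $e$). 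By invariance of $\tilde\eta$ and the $(1-\eps)$-covering hypothesis, $\tilde\eta(A)$ is the frequency of uncovered positions, so $\tilde\eta(A)\le\eps$, whereas a short computation using $|S|>3/\eps$ gives $\tilde\eta(B)\ge(2\eps-\eps/3)(1-\eps)>\eps$. Hence $\tilde\eta(A)<\tilde\eta(B)$, and Lemma~\ref{comparison} furnishes a partition $A=A_0\sqcup A_1\sqcup\cdots$ with $\tilde\eta(A_0)=0$ together with elements $g_1,g_2,\dots\in G$ such that the sets $g_n(A_n)$ are pairwise disjoint and contained in $B$.

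The next step is to convert this ``static'' matching of sets into an equivariant matching of points of $G$. A measurable set $C\subset\tilde\TT$ corresponds, for each $\tilde\T$, to the marked set $\{g\in G:\ g\tilde\T\in C\}$; for $C=A$ this is $W_{\tilde\T}$ and for $C=B$ it is $\bigcup_{T\in\tilde\T}R(T)$. Since $\bigcup_{g\in G}g^{-1}(A_0)$ is still $\tilde\eta$-null, for $\tilde\eta$-a.e.\ $\tilde\T$ every $h\in W_{\tilde\T}$ lies in exactly one $A_{n(h)}$ with $n(h)\ge1$, and I set $\Psi_{\tilde\T}(h):=g_{n(h)}h$, which then belongs to $\bigcup_{T\in\tilde\T}R(T)$; the pairwise disjointness of the sets $g_n(A_n)$ forces $\Psi_{\tilde\T}$ to be injective. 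For each tile $\tilde T$ of $\tilde\T$ put $D(\tilde T):=\Psi_{\tilde\T}^{-1}(R(\tilde T))\subset W_{\tilde\T}$ and $\hat T:=\tilde T\cup D(\tilde T)$ with center $c_{\hat T}:=c_{\tilde T}$. Injectivity of $\Psi_{\tilde\T}$ and the inclusion of its image in $\bigsqcup_{\tilde T}R(\tilde T)$ show that the sets $D(\tilde T)$ partition $W_{\tilde\T}$ and $|D(\tilde T)|\le|R(\tilde T)|\le2\eps|\tilde T|$; consequently $G=\bigsqcup_{\tilde T}\hat T$ is a disjoint tiling, with every $\hat T$ a $2\eps$-enlargement of $\tilde T$ having the same center. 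Setting $\hat\phi(\tilde\T):=\hat\T$, the map $\hat\phi$ is $G$-equivariant and measurable -- it is determined by $\tilde\T$ together with the fixed data $R(\cdot)$ and $\{(A_n,g_n)\}_{n\ge1}$ -- so, taking $\hat\TT$ to be the subshift it generates and $\hat\eta:=\hat\phi_*\tilde\eta$, we obtain an ergodic invariant measure for which (1)--(2) hold.

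I expect the only genuine subtlety to lie in the third paragraph: setting up the dictionary between measurable subsets of $\tilde\TT$ and shift-equivariant markings of $G$, checking that the data from Lemma~\ref{comparison} indeed defines an injection $\Psi_{\tilde\T}$ on $W_{\tilde\T}$ for $\tilde\eta$-a.e.\ $\tilde\T$ (this is where $\tilde\eta(A_0)=0$ enters, via the null set $\bigcup_g g^{-1}(A_0)$), and verifying that injectivity of $\Psi_{\tilde\T}$ follows from the pairwise disjointness of the $g_n(A_n)$. The equivariance of the reservoir assignment, the measure estimates in the second paragraph, and the check that $\hat\T$ is a genuine tiling with the claimed enlargement bound are all routine.
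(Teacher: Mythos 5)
Your proposal is correct and follows essentially the same route as the paper: fix a shift-equivariant ``reservoir'' inside each shape, compare the measure of the set $A$ (uncovered origin) with the set $B$ (origin in the reservoir), invoke Lemma~\ref{comparison} to match $A$ into $B$, translate this back to a pointwise injection of $W_{\tilde\T}$ into $\bigsqcup R(\tilde T)$, and enlarge tiles accordingly. The only cosmetic difference is that the paper uses a reservoir of size $\lceil 2\eps|\tilde S|\rceil$ (so no lower bound on shape size is needed to get $\tilde\eta(B)>\eps$), whereas you use $\lfloor 2\eps|S|\rfloor$ and compensate with the harmless hypothesis $|S|>3/\eps$; this also makes your $2\eps$-enlargement bound exact rather than approximate.
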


\begin{proof}
For each $\tilde\T\in\tilde\TT$ let $A(\tilde\T)$ denote the subset of $G$ not covered by the tiles $\tilde\T$. In each shape $\tilde S$ of $\tilde\TT$ we select a subset $B_{\tilde S}$ of cardinality $\lceil 2\eps|\tilde S|\rceil$. For each $\tilde\T\in\tilde\TT$ we let $B(\tilde\T)$ denote the union of the sets $B_{\tilde S}c$, where $\tilde Sc$ are the tiles of $\tilde\T$ represented by their shapes and centers.

We define two subsets of $\tilde\T$:
$$
A=\{\tilde\T\in\tilde\TT:e\in A(\tilde\T)\} \text{ \ and \ }
B=\{\tilde\T\in\tilde\TT:e\in B(\tilde\T)\}.
$$
Since, for any $\tilde\T\in\tilde\TT$, $A(\tilde\T)$ has upper Banach density at most $\eps$ and since the tiles of $\tilde\T$ are disjoint, by Proposition~\ref{ude} $B(\tilde\T)$ has lower Banach density at least $2\eps(1-\eps)$ (which is larger than $\eps$). Hence, by \cite[Proposotion~6.10]{DZ} we have $\tilde\eta(A)\le\eps$ and $\tilde\eta(B)>\eps$. We can now use Lemma~\ref{comparison} to partition $A$ into sets $A_0,A_1,A_2,\dots$ ($A_0$ has measure zero) so that the sets $g_n(A_n)$, $n\ge1$, where $G=\{e,g_1,g_2\dots\}$, are disjoint and contained in $B$. For $\tilde\eta$-almost every $\tilde\T$ and each $a\in A(\tilde\T)$ we have $a(\tilde\T)\in A$ and then there exists $n=n(a,\tilde\T)$ such that $a(\tilde\T)\in A_n$. Lemma \ref{comparison} states that $g_n(A_n)\subset B$, which implies that $g_na(\tilde\T)\in B$, that is, $e\in B(g_na(\tilde\T))$, or, equivalently, $g_na\in B(\tilde\T)$ and thus there exists a tile $\tilde Sc$ of $\tilde\T$ such that $g_na\in B_{\tilde S}c$. We attach $a$ to the tile $\tilde Sc$. In this manner, we distribute all elements of $A(\tilde\T)$ among the tiles of $\tilde\T$ using a measurable and shift-equivariant algorithm.\footnote{For each $a,c\in G$ and each shape $\tilde S$ of $\tilde{\TT}$, 
the set $\Xi(a,c,\tilde S)$ of tilings $\tilde\T\in\tilde\TT$ such that $a\in A(\tilde\T)$, $\tilde Sc$ is a tile of $\tilde\T$, and $a$ is attached to $\tilde Sc$, is measurable. Moreover, for any $g\in G$, $g(\Xi(a,c,\tilde S))=\Xi(ag, cg, \tilde S)$.} 
The resulting collection of enlarged tiles $\hat Sc$ obtained in the process described above is an improper tiling $\hat\T$. The map $\hat\phi$ sending each $\tilde\T$ to $\hat\T$ is a measure-theoretic factor map from $\tilde\TT$ to an (improper) dynamical tiling $\hat\TT$. Note that each tile $\tilde Sc$ of $\tilde\T$ can gain at most $|B_{\tilde S}|$ (i.e., less than $\lceil2\eps|\tilde S|\rceil$) elements. Indeed, for two elements $a,a'$ of $A(\tilde\T)$, let $n=n(a,\tilde\T)$ and $n'=n(a',\tilde\T)$, and suppose that $g_na = g_{n'}a'$ (i.e., both $a$ and $a'$ are ``attached'' to the same element of $B(\tilde\T)$). Then we have $a(\tilde\T)\in A_n$ and $a'(\tilde\T)\in A_{n'}$ and, at the same time, $g_na(\tilde\T) = g_{n'}a'(\tilde\T)$, implying that $g_n(A_n)\cap g_{n'}(A_{n'})\neq\varnothing$. This is possible only when $n=n'$ and thus $a=a'$. 
\end{proof}

\subsection{Topological models of free ergodic systems}

\begin{lem}\label{modele}
Any free ergodic system $(Y,\nu,G)$ with finite entropy admits the following two \tl\ models: 
\begin{enumerate}
	\item a free \tl\ action $(Y_0,G)$ on a zero-dimensional space $Y_0$ supporting an ergodic measure $\nu_0$ isomorphic to $\nu$,
	\item a uniquely ergodic subshift $(Y',G)$ such that its unique \im\ $\nu'$ is isomorphic to $\nu$.
\end{enumerate}
\end{lem}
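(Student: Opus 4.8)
The plan is to prove the two statements separately. Statement~(2) is essentially the Jewett--Krieger theorem for free ergodic actions of countable amenable groups and is the only genuinely non-elementary ingredient; statement~(1) follows from a routine symbolic coding plus a standard freeness upgrade.

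For~(1), since $(Y,\nu)$ is standard I would fix a refining \sq\ of finite measurable partitions $\P_1\preceq\P_2\preceq\cdots$ of $Y$ whose $G$-translates jointly generate the Borel $\sigma$-algebra of $Y$ modulo $\nu$ (with $h(\nu)<\infty$, Krieger's generator theorem would even furnish a single finite generating partition, but this is not needed here). Coding each $y\in Y$ by the array whose $(g,k)$-entry is the index of the atom of $\P_k$ containing $g(y)$ defines a Borel, $G$-equivariant map $\pi$ from $Y$ into a \zd\ compact metric space carrying the shift action; since the partitions generate, $\pi$ is injective off a $\nu$-null set, so $(Y,\nu,G)$ is isomorphic to $(\supp(\pi_*\nu),\pi_*\nu,G)$, a \zd\ ergodic model of $\nu$ with full support. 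It then remains only to upgrade $\nu_0$-almost-everywhere freeness to \emph{topological} freeness, for which I would invoke the known existence of free \zd\ topological models of free ergodic actions of countable amenable groups (obtained from the Ornstein--Weiss quasitiling/Rokhlin machinery; cf.\ Lemma~\ref{qtilingfactor} and Corollary~\ref{cor_quasitiling}, and see also \cite{DHZ}). The push-forward $\nu_0$ of $\nu$ is then ergodic and isomorphic to $\nu$, as required.

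For~(2), note that a finite-alphabet subshift is a \zd\ compact space, so the statement is precisely the Jewett--Krieger theorem for free ergodic actions of countable amenable groups (Rosenthal; see also B.~Weiss): every such system possesses a uniquely ergodic topological model. Since $h(\nu)<\infty$, $\nu$ carries a finite generating partition (Krieger's generator theorem for amenable groups), and then the uniquely ergodic model can be taken to be a subshift over a finite alphabet whose unique \im\ $\nu'$ is isomorphic to $\nu$. For a self-contained derivation I would run the Jewett--Krieger ``repainting'' construction on the free \zd\ model $(Y_0,\nu_0,G)$ produced in~(1): a F\o lner congruent system of disjoint quasitilings over $Y_0$ (available by Corollary~\ref{cor_quasitiling}) supplies the nested tower structure along which one successively recodes a fixed finite generator so that the empirical frequencies of all finite words converge uniformly over $Y_0$; finiteness of the generator is exactly what keeps the limiting model a finite-alphabet subshift.

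The main obstacle is the amenable Jewett--Krieger theorem invoked in~(2): it is classical but its proof is long, so a fully self-contained treatment would have to reproduce it. In~(1) the only non-formal point is the passage from almost-everywhere freeness to genuine topological freeness, which is why I would quote an existing free \zd\ model rather than extract it by hand.
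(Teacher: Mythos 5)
Your overall architecture matches the paper's --- both statements are reduced to known ``hard'' external results (an amenable Jewett--Krieger theorem and a generator theorem) --- but there are two concrete gaps, one in each part, at precisely the points where the paper is careful to cite the right result.

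For~(1), your partition-coding step is fine and produces a \zd\ ergodic model; the issue is the ``freeness upgrade.'' You cite Lemma~\ref{qtilingfactor} and Corollary~\ref{cor_quasitiling} as sources of free \zd\ models, but both of those statements take a free topological \zd\ action as \emph{input} --- they produce quasitiling factors of a free system, not free models of a measure-theoretic one. Using them here is circular. The paper instead invokes G.~Elek's theorem, which asserts directly that every free ergodic $G$-action (even without amenability) admits a free minimal \zd\ topological model. If you intend to quote a freeness-upgrade result, Elek's theorem (or something equivalent) is what is needed; the quasitiling lemmas are not.

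For~(2), the claim ``Jewett--Krieger gives a uniquely ergodic model, Krieger gives a finite generator, therefore there is a uniquely ergodic finite-alphabet subshift model'' does not follow. A measure-theoretic finite generator $\P$ on a strictly ergodic model defines a shift-equivariant map into $\Delta^G$, but the image subshift need not be uniquely ergodic: empirical $\P$-name frequencies converge $\nu$-a.e.\ and along generic points, not \emph{uniformly} over the closure, so the closure of the $\P$-names can acquire extra invariant measures. What is actually needed is a \emph{uniform} generator, i.e.\ a finite partition for which this convergence is uniform on a full-measure set whose orbit closure one takes. That is exactly the content of Rosenthal's Theorem~$2'$, which the paper cites and which is the genuinely non-formal ingredient here. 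Your last paragraph gestures at the right idea (recoding along a nested quasitiling tower to force uniform frequency convergence, keeping the alphabet finite), but as written it is a sketch of the proof of Rosenthal's theorem rather than a step that can be taken for granted; without citing the uniform-generator theorem or fully carrying out that recoding, the argument for~(2) has a hole precisely where the paper leans on \cite{R}.
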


\begin{proof}[Proof of Lemma \ref{modele}]
The existence of the model $(Y_0,G)$ is a direct corollary of a result by G.\ Elek, \cite[Theorem~2]{E}.
The existence of the second model follows from \cite[Theorem~2']{R}, subject to the following explanations: The proof of \cite[Theorem~2']{R} is based on Rosenthal's preprint \emph{Strictly ergodic models and amenable group actions} where he proved a version of the Jewett--Krieger Theorem for amenable group actions: 
\begin{itemize} 
\item[(*)] every free ergodic action $(Y,\nu,G)$ of an amenable group has a strictly ergodic free model $(Y'',G)$. 
\end{itemize}
Since that preprint has not been published, we can instead use a recent note by B.~Weiss \cite{W}, where (*) is shown, using a 2018 result of B.\ Frej and D.\ Huczek \cite[Theorem~1.2]{FH}. 
From here we assume that $(Y,\nu,G)$ has finite entropy. In this case, the model $(Y'',\nu'',G)$ also has a finite \tl\ entropy. Then \cite[Theorem~2']{R} asserts that there exists a uniform generator $\P$ in $Y''$ with a finite number of atoms (in fact, $|\P|=\lfloor\log{h(\nu)}\rfloor+1$). Skipping the definition of a uniform generator (which can be found in \cite{R}), it suffices to know that uniformity of $\P$ implies that all points in the subshift $(Y',G)$, defined as the closure of the $\P$-names of points belonging to a certain subset $Y'''\subset Y''$ with $\nu''(Y''')=1$, are generic for a measure $\nu'$ isomorphic to $\nu$. So, the subshift $(Y',G)$ is a uniquely ergodic symbolic model for $(Y,\nu,G)$, as claimed in (2).
\end{proof}

\begin{rem}
The symbolic model $(Y',G)$ need not be \tl ly free. This is why we need both models. 
It is quite possible that there exists a free uniquely ergodic symbolic model, but we have neither citation nor proof to support such a claim.
\end{rem}

From now on we will identify the measure-theoretic ergodic system $(Y,\nu,G)$ with its model $(Y_0,\nu_0,G)$ from Lemma \ref{modele} (1), that is, each time we write $(Y,\nu,G)$ we will assume that $(Y,G)$ is a free \tl\ action of $G$ on a compact zero-dimensional space $Y$ and $\nu$ is one of the ergodic measures on $Y$.

We let $\pi:Y\to Y'$ denote the isomorphism between $(Y,\nu,G)$ and the uniquely ergodic subshift $(Y',\nu',G)$ from Lemma \ref{modele} (2).

\section{Sketch of the proof of the main theorem} 
\subsection{Preliminary reshaping}\label{skecz} Let $(X,G)$ be a subshift with the specification property, containing a free element, and let $M$ denote the margin of specification. Let $(Y,\nu,G)$ be a free ergodic measure-preserving system such that $h(\nu)<\htop(X)$. Our goal is to construct a measure-theoretic isomorphism $\Psi:(Y,\nu,G)\to (X,\mu,G)$. For $\nu$-almost every $y\in Y$ we need to create a symbolic element $x=\Psi(y)\in X$, so that $\Psi$ is measurable, $\nu$-almost everywhere invertible and shift-equivariant. Then the image of $\nu$ via $\Psi$ is going to be the desired measure $\mu$ on $X$, isomorphic to $\nu$.

By convention, $(Y,G)$ is a free \tl\ zero-dimensional system and $\nu$ is one of the ergodic measures in $Y$. Also, let $\Delta$ denote the alphabet of the other (symbolic and uniquely ergodic) model $(Y',G)$. By the variational principle, $\htop(Y')=h(\nu)$.

By Lemma~\ref{qtilingfactor}~(a), there exists a \tl\ factor map $\phi:(Y,G)\to\TT$ to a strongly $\eps$-disjoint, $(1-\eps)$-covering dynamical quasitiling having $r_\eps$ shapes $S_1=F_{n_1},S_2=F_{n_2},\dots,S_{r_\eps}=F_{n_{r_\eps}}$, where $n_0<n_1<n_2<\cdots<n_{r_\eps}$. Furthermore, by Lemma~\ref{Fintildas} (which is an improved version of Lemma~\ref{qtilingfactor}~(b)), each quasitiling $\T\in\TT$ determines (via a \tl\ factor map $\tilde\phi$) a disjoint quasitiling $\tilde\T\in\tilde\TT$ whose shapes are $(1-2\eps)$-subsets of those of $\T$ and have the same centers (and satisfy (iv) for some set $K$ whose role will be revealed in a moment). Finally, by Lemma~\ref{quasitofull}, $\tilde\T$ determines (via a measure-theoretic factor map $\hat\phi$) an improper tiling $\hat\T\in\hat\TT$ whose shapes are $2\eps$-enlargements of those of $\tilde\T$ and still have the same centers. 

The parameters $\eps$ and $n_0$ decide about the covering property of $\tilde\TT$ and the invariance property of its shapes. We will establish these parameters later. 

\subsection{Sketch of the construction of $\Psi$.}
We start by giving a sketch in which we skip most of the technical details. These will be given later, in the formal part of the proof. We hope that the sketch will help the reader keep track of the logic behind the formal proof.

The general idea is quite simple. Using the entropy gap between the subshifts $X$ and $Y'$ we will create an injective code associating to every block of the form $\alpha\approx y'|_{\hat T}$, where $\hat T=\hat Sc$ is a tile of the (improper) tiling $\hat\T=\hat\phi\circ\tilde\phi\circ\phi(y)$ associated to $y=\pi^{-1}(y')$, an $X$-admissible block $x|_{\tilde T}\approx\theta_{\hat S,\tilde S}(\alpha)$ over the tile $\tilde T=\tilde Sc$ of $\tilde\T$ contained in $\hat T$. The entropy gap allows us to compress the code a bit, so that the domain of $\theta_{\hat S,\tilde S}(\alpha)$ is slightly smaller than $\tilde T$; we leave a small margin $\tilde T\setminus\tilde T_{\bar M}$ of $\tilde T$ (the set $\bar M$ will be specified later) and a small ``hole'' $Kc$ in the center temporarily empty (this is where the property (iv) is needed). The margin is necessary for future ``gluing'' the blocks $\theta_{\hat S,\tilde S}(\alpha)$ using the specification property, the role of the hole $Kc$ will be explained in a moment. The ``code'' $\theta_{\hat S,\tilde S}$ is injective, but depends on the shapes $\hat S$ and $\tilde S$. Thus, in order to be able to invert the mapping $\Psi$, i.e., recover $y'$ from $x$, we need to ``memorize'' in $x$ the center $c$ and the shapes $\hat S$ and~$\tilde S$. 

This is the role of the central hole. We put there in $x$ (also leaving within the hole a small margin for ``gluing'') a special block, called the ``marker'', which indicates the center $c$ and ``memorizes'', instead of the shapes $\hat S$ and $\tilde S$, the shape $S$ of the tile $T=Sc$ of the initial quasitiling $\T=\phi(y)$. This is an essential simplification, since $\T$ has relatively few shapes $S_1,S_2,\dots,S_{r_\eps}$, while the number of shapes $\tilde S$ is unknown, and, even worse, there may be infinitely many shapes $\hat S$, so that memorizing them in a finite marker would be impossible. So, why is memorizing the centers and shapes of $\T$ sufficient? First of all, $T$, $\hat T$ and $\tilde T$ share a common center, thus the question only concerns the shapes. The answer is as follows. Once we identify {\bf all} markers occurring in $x$, we will be able to recover the entire quasitiling $\T$. Since both $\tilde\TT$ and $\hat\TT$ are factors of $\TT$, $\T$ {\bf determines} both quasitilings $\tilde\T$ and $\hat\T$, so we will also know the shapes $\tilde S$ and $\hat S$ of any tiles of these quasitilings (since $\hat\phi$ is a measure-theoretic factor map, in order to recover $\hat\T$ from $\T$ we really need to see the entire quasitiling $\T$; no finite horizon viewing is sufficient).

Markers must be ``unambiguous'', that is, they must occur in the elements $x=\Psi(y)$ {\bf exclusively} at the centers of the tiles. We will devote a long section to building the markers in a way that eliminates their unintentional occurrences.

Finally, we need to overcome one last difficulty. So far, the images $x=\Psi(y)$ are filled with symbols everywhere except between the tiles $\tilde T$, over the margins of these tiles, and over the margins around the markers. The specification property allows us to fill in the missing symbols {\bf individually} for each $x$, and usually in many different ways. However, we need to do it in a way that is {\bf determined} by $y$ (so that $x=\Psi(y)$ is unique) and the dependence on $y$ must be {\bf measurable} and {\bf shift-equivariant}. This is the subject of another delicate arrangement described in subsection~\ref{killer}.
\smallskip

This sketch is largely oversimplified, but it outlines the main principles behind the construction.

\section{Markers}
In this section, we will construct the markers $\zeta_t$ (from now on, $t$ always ranges from $1$ to $r_\eps$) and restrict the range of $\Psi$ to such elements $x$ in which the markers do not occur ``by chance'', only where we place them. The construction begins with the creation of a special subsystem $\bar X$ of $X$ with an entropy almost as large as $\htop(X)$.

\subsection{Construction of the subshift $\bar X$}\label{s5.1}
Recall that $M$ (containing $e$) is the margin of specification of the subshift $X$. Since any set containing $M$ also is a margin of specification for $X$, we can assume that $M$ is symmetric (this will allow us to write $M^2$ rather than the lengthy term $M^{-1}M$).
\begin{defn}\label{ins}
Let $\alpha$ be an $X$-admissible block with a finite domain $A$ containing~$e$. Choose a symbolic element $x\in X$ and an element $c\in G$. Since the sets $Ac$ and $G\setminus M^2Ac$ are $M$-apart, there exists an element $x_\alpha\in X$ such that
\begin{enumerate}
	\item $x_\alpha|_{G\setminus M^2Ac}=x|_{G\setminus M^2Ac}$, 
	and
	\item $x_\alpha|_{Ac}\approx \alpha$.
\end{enumerate}
We will say that $x_\alpha$ \emph{is obtained by inserting $\alpha$ centered at $c$ into $x$.}
The pattern $x_\alpha|_{M^2Ac\setminus Ac}$ will be called the \emph{glue}.
\end{defn}
Note that in general the glue (and hence $x_\alpha$) is not uniquely determined by the triple $(x,c,\alpha)$, so $x_\alpha$ will denote one choice.
\smallskip

By Fact~\ref{nomin}, $(X,G)$ is not minimal, therefore there exists a proper closed \inv\ set $X'\subset X$. Then, we can choose a block $\rho$, over some finite symmetric set $R\ni e$, which is $X$-admissible but not \mbox{$X'$-admissible}. 
We fix a finite symmetric set $W$ such that
\begin{equation}\label{H3}
W\supset MR.
\end{equation}
An additional requirement as to how large $W$ must be will be specified in a moment.

If $V$ is a maximal $W$-separated set, by a \emph{grid}, denoted $\rho_V$ we will understand the pattern over $RV$ such that, for each $v\in V$,
$$
\rho_V|_{Rv}\approx\rho.
$$
Since $W\supset MR$, the domains of the occurrences of $\rho$ in a grid are $M$-apart and since $\rho$ is $X$-admissible, so is the grid. We define $\bar X\subset X$ as the set of these elements $\bar x\in X$ which contain a grid, i.e., for which there exists a maximal $W$-separated set $V$ such that $x|_{RV}=\rho_V$.
As any grid is $X$-admissible, $\bar X$ is nonempty. Since the set $\Omega_W$ consisting of the indicator functions of all maximal $W$-separated sets is closed and shift-\inv, so is $\bar X$. As $\rho$ is forbidden in $X'$, $\bar X$ is disjoint from $X'$, therefore, $(\bar X,G)$ is a proper subsystem of $(X,G)$. An element $\bar x$ of $\bar X$, can be created from any element $x\in X$ by first choosing a maximal $W$-separated set $V$, enumerating its elements as $v_1,v_2,\dots$ and then successively inserting into $x$ copies of the block $\rho$ centered at the elements $v_n$, $n\ge1$. Observe that if $\bar x$ is created from $x$ in this manner, then $\bar x$ differs from $x$ on a subset of $M^2RV$ and hence on a set of upper Banach density at most $\frac{|M^2R|}{|W|}$ (see Remark~\ref{dr}). This easily implies that 
$$
\htop(\bar X)\ge\htop(X)-\frac{|M^2R|}{|W|}\log|\Lambda|-\mathsf{H}\Bigl(\tfrac{|M^2R|}{|W|}\Bigr),
$$ 
where $\mathsf{H}(\epsilon)=-\epsilon\log{\epsilon}-(1-\epsilon)\log{(1-\epsilon)}$. The last term is subtracted to compensate for the entropy associated with an unknown partition of $G$ into two sets, one of upper density $\tfrac{|M^2R|}{|W|}$, and its complement.
By choosing $W$ large enough, we can ensure that $\htop(\bar X)>h(\nu)$. 

\subsection{The basic marker}
Let $B$ be some symmetric set containing $RW^2$. We fix an $X'$-admissible block $\beta$ over $B$ and call it the \emph{basic marker}. 

\begin{prop}\label{ban}
The basic marker does not occur in $\bar X$.
\end{prop}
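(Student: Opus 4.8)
The plan is to argue by contradiction, exploiting the tension between two facts: every element of $\bar X$ contains a full grid of translated copies of the block $\rho$, which is \emph{not} $X'$-admissible, whereas the basic marker $\beta$ \emph{is} $X'$-admissible. Concretely, suppose that $\beta$ occurs in some $\bar x\in\bar X$. Since $B\supset RW^2\ni e$, the block $\beta$ has center $e$, so this occurrence is centered at some $g\in G$; that is, $\bar x|_{Bg}\approx\beta$. By definition of $\bar X$ there is a maximal $W$-separated set $V$ with $\bar x|_{RV}=\rho_V$, hence $\bar x|_{Rv}=\rho_V|_{Rv}\approx\rho$ for every $v\in V$. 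The goal is to exhibit a single $v\in V$ with $Rv\subset Bg$, so that the window $Bg$ in which $\beta$ sits actually encloses a complete copy of $\rho$.

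To produce such a $v$ I would invoke property (I) of maximal $W$-separated sets. Since $W$ is symmetric, $W^{-1}W=W^2$, so (I) applied to $g$ gives an element $v\in V\cap W^2g$; write $v=wg$ with $w\in W^2$. Then $Rv=Rwg\subset RW^2g\subset Bg$, the last inclusion holding because $B\supset RW^2$ by construction. Now $Rv\subset Bg$, $\bar x|_{Rv}\approx\rho$, and $\bar x|_{Bg}\approx\beta$; restricting the relation $\bar x|_{Bg}\approx\beta$ to the subdomain $Rv$ and transporting by the same shift shows that $\beta|_{Rvg^{-1}}\approx\rho$ (note $Rvg^{-1}\subset B$), i.e.\ $\rho$ occurs inside the block $\beta$. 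Finally, $\beta$ being $X'$-admissible, it occurs in some $x'\in X'$, and therefore $\rho$ occurs in $x'$ as well, making $\rho$ an $X'$-admissible block. This contradicts the choice of $\rho$ as an $X$-admissible block which is not $X'$-admissible, and the contradiction proves the proposition.

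I do not anticipate a genuine obstacle: the set $B$ was deliberately taken to contain $RW^2$ precisely so that property (I) forces every translate $Bg$ to contain a full translate of the domain $R$ of $\rho$. The only point requiring a little care is the elementary bookkeeping with the ``$\approx$'' relation — verifying that an occurrence of $\beta$ in $\bar x$ genuinely carries along an occurrence of $\rho$ inside the finite pattern $\beta$ — together with keeping straight that $e\in B$, so that $\beta$ is indeed a block and its occurrences are centered.
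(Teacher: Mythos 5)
Your argument is correct and is essentially the same as the paper's: suppose an occurrence $\bar x|_{Bg}\approx\beta$, use property (I) of maximal $W$-separated sets (with $W$ symmetric, so $W^{-1}W=W^2$) to find $v\in V\cap W^2g$, note $Rv\subset RW^2g\subset Bg$ so a full copy of $\rho$ sits inside the window, and conclude $\rho$ would be $X'$-admissible — a contradiction. The extra care you take with the ``$\approx$'' bookkeeping is fine but adds nothing beyond the paper's version.
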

\begin{proof}
Suppose $\bar x|_{Bg}\approx\beta$ for some $\bar x\in\bar X$ and $g\in G$. By the definition of $\bar X$, $\bar x$ contains a grid $\rho_V$, where $V$ is a maximal $W$-separated set. By property (I) of the maximal $W$-separated sets, the intersection $V\cap W^2g$ is nonempty. Let $v$ belong to this intersection. Then $Rv\subset RW^2g\subset Bg$ which implies that $\rho$ occurs as a subblock of $\beta$. This is a contradiction, because $\beta$ is $X'$-admissible while $\rho$ is not. 
\end{proof}

\subsection{Enhanced pattern}\label{enh}
Let $\alpha$ be an $X$-admissible pattern with a finite domain~$A$. 
After inserting $\alpha$ into a symbolic element $\bar x\in\bar X$, centered at $e$, the basic marker $\beta$ may occur in the resulting element $\bar x_\alpha\in X$, centered at some elements $c'$. Since $\beta$ is not $\bar X$-admissible, the domain $Bc'$ of such an occurrence must intersect $M^2A$, and hence $c'\in BM^2A$.
The elements $c'$ (if they exist) will be called \emph{basic centers}. The existence and positions of the basic centers depend not only on $\alpha$ but also on the ``context'' of $\bar x$ near $e$. We will now describe a procedure designed to eliminate the dependence on the context. 

We define \emph{enhanced pattern} $\underline\alpha$, as follows: We fix an element $\bar x^*\in\bar X$ (we will use the same $\bar x^*$ regardless of $\alpha$) and insert the pattern $\alpha$ into $\bar x^*$, centered at $e$. We select one element of $X$ obtained in this manner and denote it by $\bar x^*_\alpha$ (the selection is necessary, as the glue does not need to be unique). We denote by $J_\alpha$ the (possibly empty) set of centers of all occurrences of $\beta$ in $\bar x^*_\alpha$ (the set of basic centers). We have 
\begin{equation}\label{Jalfa}
  J_\alpha\subset BM^2A
\end{equation}
 and all occurrences of $\beta$ in $\bar x^*_\alpha$ fit within the set 
\begin{equation}\label{W6}
\underline A=B^2M^2A. 
\end{equation}
We let 
$$
\underline\alpha=\bar x^*_\alpha|_{\underline A}
$$
and we call $\underline\alpha$ the \emph{enhanced pattern} (associated with $\alpha$). Notice that $J_{\alpha}$ equals the set of centers of the occurrences of $\beta$ in $\underline\alpha$. The construction of $\underline\alpha$ is illustrated in Figure~\ref{enha}.
\begin{figure}[h]
\includegraphics[width=\columnwidth]{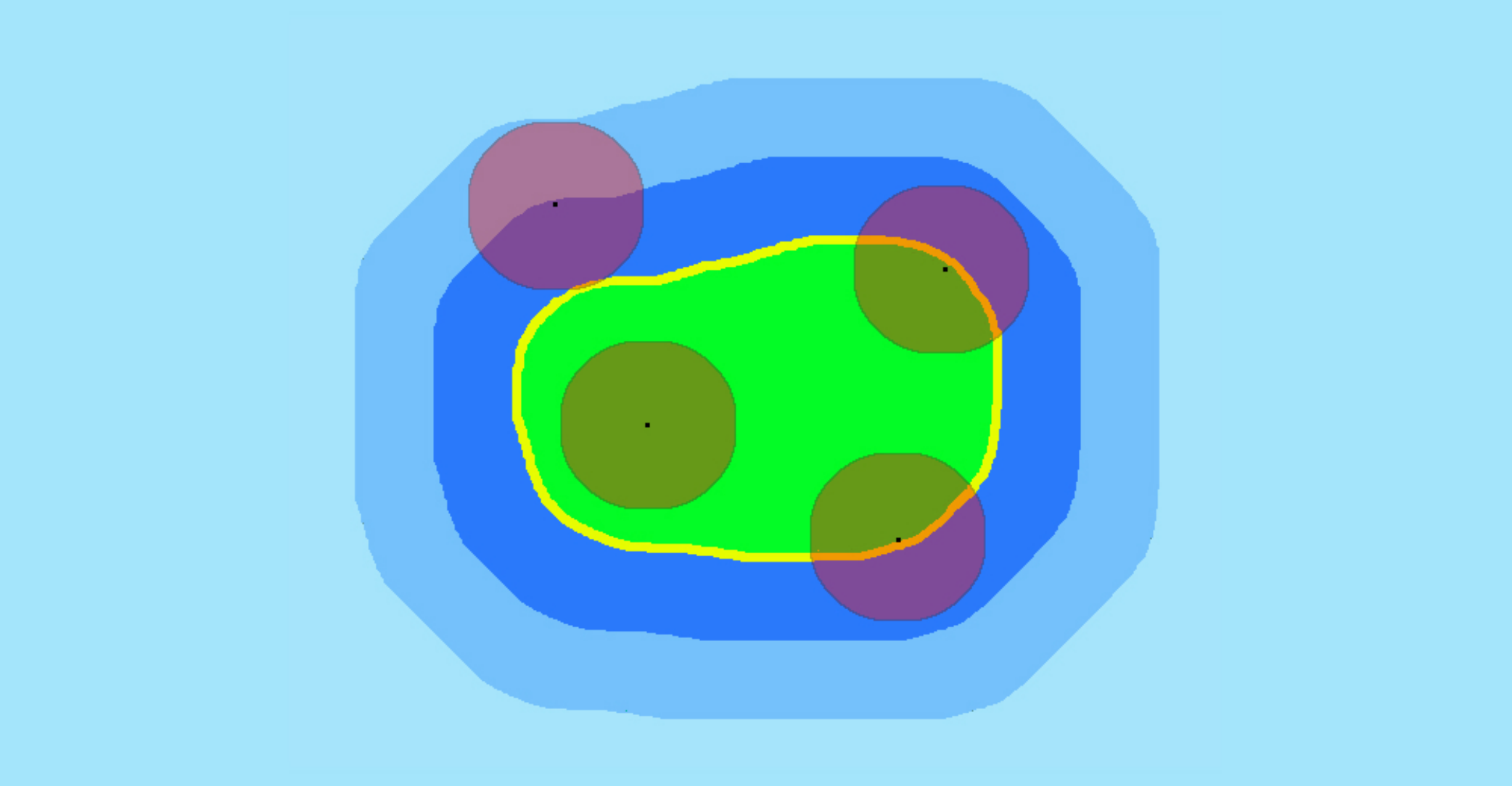}
\caption{\footnotesize An $X$-admissible pattern $\alpha$ with domain $A$ is shown in the bright green color. It is inserted into $\bar x^*$ (all shades of blue) with an $X$-admissible glue (the yellow area). The large circles are the occurrences of the basic marker $\beta$ in the resulting element $\bar x^*_\alpha$. Their domains must intersect $M^2A$ and thus their centers (black dots) fall within $BM^2A$ (green plus yellow plus dark blue area) and their domains are entirely contained within $\underline A=B^2M^2A$ (green plus yellow plus dark blue plus light blue area). The enhanced pattern $\underline\alpha$ is $\bar x^*_\alpha|_{\underline A}$.} 
\label{enha}
\end{figure}

\subsection{Permutability of $J_\beta $}
An important set is $J_\beta $, the set of basic centers in the enhanced pattern $\underline\beta$ associated with $\beta$. It consists of $\{e\}$ (the ``true center'') and perhaps some other elements belonging to $BM^2B$.

There are two essentially different possibilities for this set (see Figure~\ref{betta}): 
\begin{enumerate}
	\item $J_\beta  h\neq J_\beta $ for any $h\in G,\ h\neq e$,
	\item there exists an element $h\in G,\ h\neq e$, such that $J_\beta  h=J_\beta $ (we will say that $J_\beta $ is \emph{permutable}).
\end{enumerate}

The set $H_{\beta}=\{h\in G:J_\beta  h=J_\beta \}$ is a finite subgroup of $G$ contained in $J_\beta$, and $J_\beta $ is permutable if and only if $H_{\beta}$ is nontrivial. In particular, if $G$ is torsion-free, we always have the first case. 

In case (1) we will say that $\underline\beta$ is an ``unambiguous marker''.
This case is easier and will reappear in our discussion later, in subsection~\ref{unam}. 

In case (2) the enhanced basic marker $\underline\beta$ may be ``ambiguous'', that is, it may indicate more than one location of the center. In this case we need to build a more complicated ``unambiguous marker''. The construction utilizes the existence of a free element $x_0\in X$ and it occupies the following two subsections.

\begin{figure}[h]
\includegraphics[width=0.49\columnwidth]{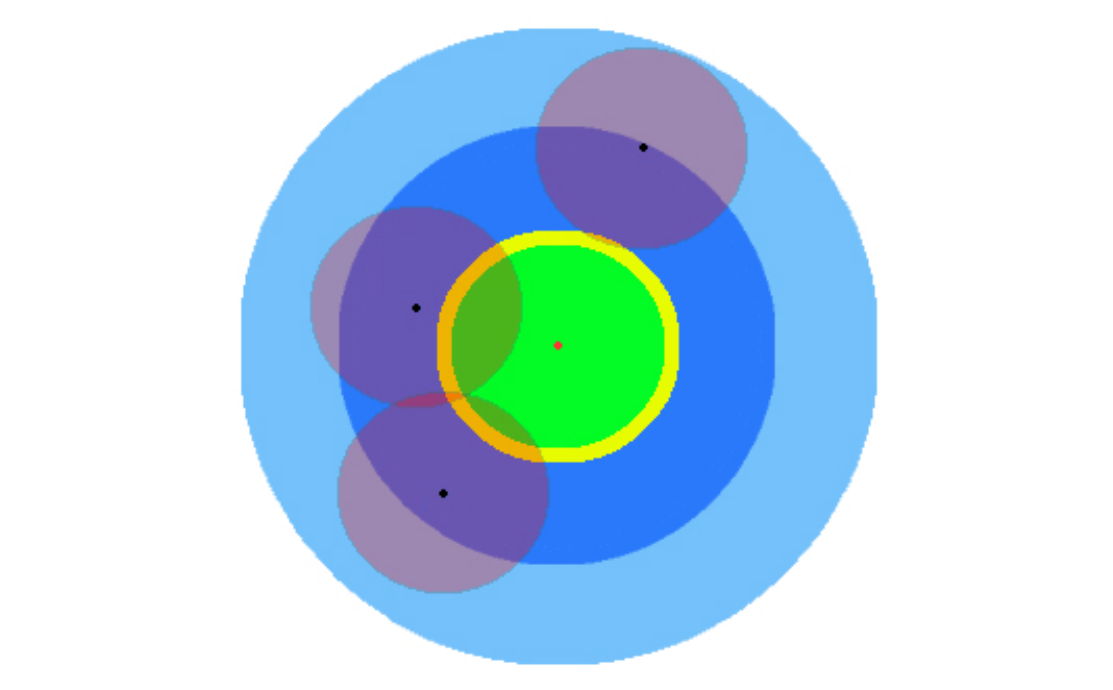}
\includegraphics[width=0.49\columnwidth]{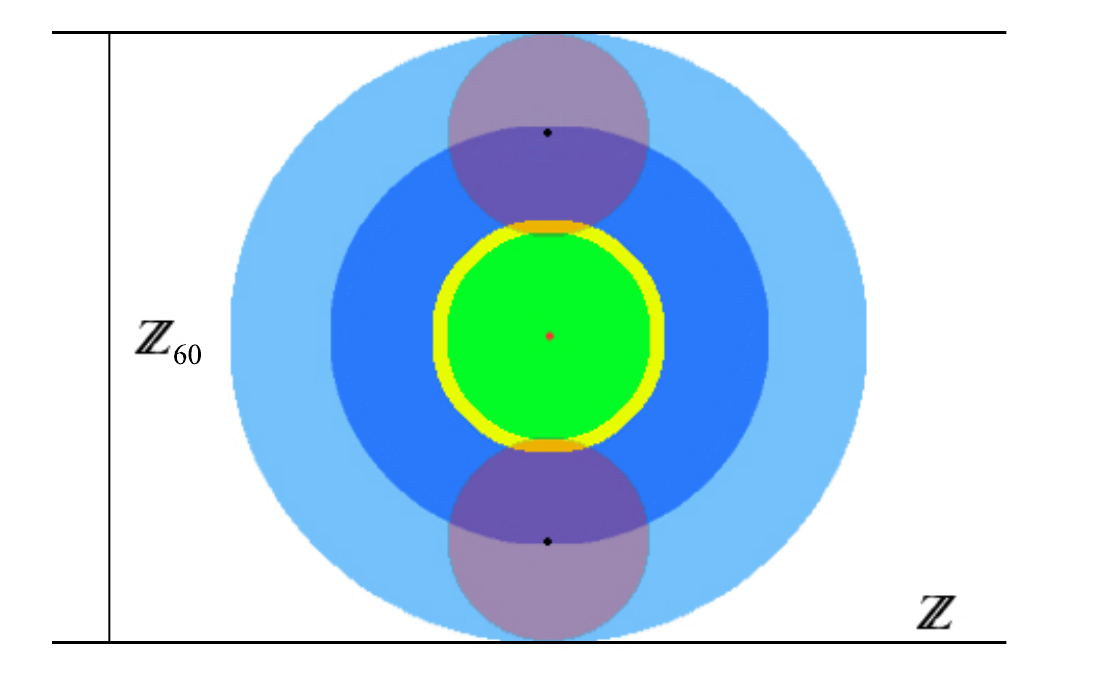}
\caption{\footnotesize The block $\underline\beta$ with $J_\beta$ nonpermutable (on the left) and permutable (on the right; here the group is $\Z\times\Z_{60}$)}
\label{betta}
\end{figure}

\subsection{Resistant block}\label{resb}
If $J_\beta $ is permutable, we can try to create an ``unambiguous marker'' by combining $\underline\beta$ with another enhanced block, say $\underline\gamma$ over a set $\underline\Gamma$, which is ``resistant'' to shifts by elements in the finite subgroup $H_{\beta}$, as specified in the definition below.

\begin{defn}\label{resis}
Let $\gamma$ be a block with a finite domain $\Gamma$ and let $H\subset G$ be a finite set. 
We say that $\gamma$ is \emph{resistant to the shifts by the elements of $H$} if, for every $h\in H\setminus\{e\}$, there exists $g\in\Gamma$ such that $gh\in\Gamma$ and $\gamma(g)\neq\gamma(gh)$. 
\end{defn}

\begin{lem}\label{res1}
If a block $\gamma$ is resistant to the shifts by the elements of a finite set $H$ and $\gamma$ occurs in some $x\in \Lambda^G$ centered at some $c$, then,  for any $h\in H\setminus\{e\}$, $\gamma$ does not occur in $x$ centered at $hc$.
\end{lem}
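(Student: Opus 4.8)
The plan is to prove the contrapositive-flavored statement directly by unpacking the definition of resistance. Suppose $\gamma$, with domain $\Gamma$, occurs in $x$ centered at $c$; this means $x|_{\Gamma c}\approx\gamma$, i.e., $x(gc)=\gamma(g)$ for every $g\in\Gamma$. Fix $h\in H\setminus\{e\}$ and assume for contradiction that $\gamma$ also occurs in $x$ centered at $hc$, so that $x(g(hc))=\gamma(g)$ for every $g\in\Gamma$, equivalently $x(ghc)=\gamma(g)$.

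By resistance of $\gamma$ to the shifts by the elements of $H$ (Definition~\ref{resis}), there exists $g_0\in\Gamma$ with $g_0h\in\Gamma$ and $\gamma(g_0)\neq\gamma(g_0h)$. Now I would simply evaluate $x$ at the single group element $g_0hc$ in two ways. On one hand, applying the first occurrence relation with the element $g_0h\in\Gamma$ gives $x(g_0hc)=\gamma(g_0h)$. On the other hand, applying the second occurrence relation with the element $g_0\in\Gamma$ gives $x(g_0hc)=x(g_0(hc))=\gamma(g_0)$. Hence $\gamma(g_0)=\gamma(g_0h)$, contradicting the choice of $g_0$. Therefore $\gamma$ cannot occur in $x$ centered at $hc$.

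There is essentially no obstacle here; the only thing to be careful about is the bookkeeping of the group action convention. The paper's action is $h(x)=(x(gh))_{g\in G}$ and ``$\alpha$ occurs in $x$ centered at $g$'' means $x|_{Ag}\approx\alpha$, which spelled out is $x(ag)=\alpha(a)$ for $a\in A$ (where $A$ is the domain of the block $\alpha$). So the potential pitfall is conflating $g_0(hc)$ with $(g_0h)c$; associativity of the group multiplication resolves this immediately, which is exactly why the argument goes through. I would present the proof in three or four lines, making the two evaluations of $x(g_0hc)$ explicit so the reader sees the cancellation.
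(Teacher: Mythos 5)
Your proof is correct and is essentially identical to the paper's: both pick the witness $g_0\in\Gamma$ guaranteed by resistance, evaluate $x(g_0hc)$ once using the occurrence centered at $c$ and once using the occurrence centered at $hc$, and derive the contradiction $\gamma(g_0)=\gamma(g_0h)$. No meaningful difference in route or bookkeeping.
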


\begin{proof}
Suppose $\gamma$ occurs in $x$ centered at both $c$ and $hc$, where $h\in H\setminus\{e\}$. Then, by the resistance of $\gamma$, there exists $g\in\Gamma$ such that $gh\in\Gamma$ and $\gamma(g)\neq\gamma(gh)$. We have
\begin{align*}
x(ghc)=\ &\gamma(g), \text{\ since $\gamma$ occurs in $x$ centered at $hc$, while on the other hand,}\\
x(ghc)=\ &\gamma(gh), \text{\ since $\gamma$ occurs in $x$ centered at $c$},
\end{align*}
which is a contradiction with $\gamma(g)\neq\gamma(gh)$.
\end{proof}

Let $x_0$ be a free element of $X$ and let $H\subset G$ be a finite set. For each $h\in H\setminus\{e\}$ we have $x_0\neq h(x_0)$, i.e., there exists an element $g_h\in G$ such that $(h(x_0))(g_h)={x_0}(g_hh)\neq x_0(g_h)$. We let 
$$
\Gamma=\{e\}\cup\bigcup_{h\in H\setminus\{e\}}\{g_h,g_hh\}
$$ 
and then we define $\gamma = x_0|_{\Gamma}$ (the singleton $\{e\}$ is added to $\Gamma$ just to fulfill the criterion that the domain of a block should contain the unity). It is clear that $\gamma$ is $X$-admissible and resistant to the shifts by the elements of $H$, and so is any $X$-admissible block that contains $\gamma$ as a subblock centered at $e$,\footnote{A shifted copy of $\gamma$ centered at some $g\neq e$ is resistant to the shifts by the elements of $g^{-1}Hg$ rather than $H$.} in particular, so is the enhanced block $\underline\gamma$. Our idea is to create the unambiguous marker as some kind of ``union'' of the enhanced blocks $\underline\beta$ and $\underline\gamma$, where $\gamma$ is designed to be resistant to the shifts by $H_\beta$.

\subsection{Problem with shifting the blocks $\underline\gamma$ or $\underline\beta$ and a combinatorial lemma}
There emerges an additional problem. The domains of the enhanced blocks, $\underline B$ of $\underline\beta$ and $\underline\Gamma$ of $\underline\gamma$, are not disjoint. So, we need to shift either $\underline\beta$ or $\underline\gamma$ away from the origin. Suppose that we choose to shift $\underline\gamma$ and center it at some $g\in G$ (rather than at $e$), so that $\underline B$ and $\underline\Gamma g$ are sufficiently distant. Then, as easily verified, the shifted block $\underline\gamma$ is resistant to the shifts by $g^{-1}H_\beta g$, which, in the noncommutative case, may be very different from $H_\beta$. So, this idea does not seem to work. On the other hand, if we shift $\underline\beta$ and center it at some $g\in G$ then the set of centers of the occurrences of $\beta$ in the shifted block $\underline\beta$ is $J_\beta g$, rather than $J_\beta$, and the subgroup of elements $h$ satisfying $J_\beta g=J_\beta gh$ equals $g^{-1}H_\beta g$, rather than $H_\beta$. So, the block $\gamma$ designed to be resistant to the shifts by the elements of $H_\beta$ is no longer adequate. If we choose a new block, say $\gamma'$, resistant to the shifts by the elements of $g^{-1}H_\beta g$, then the previously selected ``distance'' $g$ of shifting $\underline\beta$ may turn out insufficient, and we seem to be running in circles. To deal with this problem, we need to determine a finite group $H_0$ and create the block $\gamma$ resistant to the shifts by the elements of $H_0$, {\bf independently} of the shifting distance $g$. This seemingly impossible task can be solved with the aid of the lemma given below, by combining $\underline\gamma$ with {\bf two copies} of the enhanced basic marker $\underline\beta$ centered at carefully selected elements $g_1$ and $g_2$. This purely combinatorial lemma was inspired by part (2) of the proof of \cite[Theorem~4.1]{HK} and is of independent interest.

\begin{lem}\label{kopacz} Let $(A_n)_{n\ge1}$ be any sequence of finite subsets of some space $G$, with cardinalities bounded by a finite number $M$. Then there exists a set $A\subset G$ and an infinite set $S\subset\N$ such that for any $m,n\in S, m\neq n$, we have 
\begin{equation}\label{kop}
A_m\cap A_n=A.
\end{equation}
\end{lem}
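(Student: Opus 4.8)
\textbf{Proof plan for Lemma~\ref{kopacz}.}

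The plan is to argue by induction on the common bound $M$ on the cardinalities $|A_n|$, passing to infinite subsequences at each stage. If $M=0$ all the sets are empty and we may take $A=\varnothing$ and $S=\N$. Assume the statement holds for all bounds smaller than $M$ and suppose $|A_n|\le M$ for all $n$.

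First I would dispose of an easy case: if there is an infinite set $S_0\subset\N$ along which $|A_n|<M$, then the inductive hypothesis applied to the subsequence $(A_n)_{n\in S_0}$ finishes the proof. So we may assume that $|A_n|=M$ for all but finitely many $n$, and after discarding finitely many indices, for \emph{all} $n$. Now consider the first index, say $n=1$ (or the smallest surviving index), and look at how the later sets meet $A_1$. Since $A_1$ is finite, there are only finitely many possible values of the intersection $A_n\cap A_1$ as $n$ ranges over $\N\setminus\{1\}$; by the pigeonhole principle there is an infinite set $S_1\subset\N\setminus\{1\}$ and a fixed set $A\subset A_1$ such that $A_n\cap A_1=A$ for every $n\in S_1$. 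The natural thing now is to replace each $A_n$ (for $n\in S_1$) by $A_n\setminus A$; these have cardinality $\le M-|A|$. If $|A|\ge 1$ we are done by induction: the inductive hypothesis gives an infinite $S\subset S_1$ and a set $A'$ with $(A_m\setminus A)\cap(A_n\setminus A)=A'$ for $m,n\in S$ distinct, and then $A_m\cap A_n = A\cup A'$ is constant on $S$ — except we must also be sure $1\in S$ is or is not needed; it is cleanest to note we only need $S$ to be \emph{some} infinite set, so we simply keep the conclusion $A_m\cap A_n$ constant and rename that constant value $A$. The remaining case is $|A|=0$, i.e.\ $A_n\cap A_1=\varnothing$ for all $n\in S_1$; then we discard $A_1$, rename $S_1$ as $\N$, and repeat.

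The subtlety — and the step I expect to be the main obstacle — is that the last paragraph's ``repeat'' could in principle loop forever if at every stage the pigeonhole value $A$ turns out empty. To handle this cleanly I would organize the argument as a genuine induction on $M$ rather than an open-ended iteration: pick the smallest surviving index $n_1$; by pigeonhole extract an infinite $S_1$ on which $A_n\cap A_{n_1}$ is a fixed set $A$. If $A\ne\varnothing$, subtract $A$ everywhere on $S_1$ and apply the inductive hypothesis (bound $M-|A|<M$), obtaining the result. If $A=\varnothing$, then $A_{n_1}$ is disjoint from every $A_n$, $n\in S_1$; but now within $S_1$ the \emph{same dichotomy} recurs, and each time the empty case occurs we permanently remove one more index whose set is disjoint from all the others. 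Either the nonempty case eventually occurs (and we finish by the inductive hypothesis), or the empty case occurs infinitely often — but in the latter situation the sets $A_{n_1},A_{n_2},A_{n_3},\dots$ we peeled off are pairwise disjoint subsets of $G$ each of size $M\ge 1$, which is no contradiction in itself, so we must instead observe that along the surviving tail the relevant intersections are all empty, i.e.\ $A:=\varnothing$ works with $S$ the set of peeled-off indices $\{n_1,n_2,\dots\}$, since $A_{n_i}\cap A_{n_j}=\varnothing$ for $i\ne j$. Thus in every branch we produce the desired $A$ and infinite $S$, completing the induction and the proof.
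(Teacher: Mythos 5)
Your proof is correct, but it takes a genuinely different route from the paper's. The paper peels off one arbitrarily chosen element $a_n$ from each $A_n$, applies the inductive hypothesis to the remainders $A_n'=A_n\setminus\{a_n\}$ to get a stable intersection $A'$, and then splits into cases: if $A'\ne\varnothing$, strip $A'$ from the original sets and apply induction a second time; if $A'=\varnothing$, pigeonhole on the distinguished elements $a_n$ to extract either a common element (giving $A=\{a\}$) or a pairwise-disjoint subsequence (giving $A=\varnothing$). Your argument instead pigeonholes directly on the finitely many possible values of $A_n\cap A_1$: a nonempty stable value $A$ lets you subtract it and drop to bound $M-|A|<M$ in one stroke (a bigger reduction than the paper's one-element-at-a-time), while the empty value triggers an open-ended iteration that peels off one index at a time until either the nonempty case fires or you have accumulated an infinite pairwise-disjoint family $\{A_{n_i}\}$ — your verification that $A_{n_i}\cap A_{n_j}=\varnothing$ for $i<j$ via $n_j\in S_i$ is the key observation and is correct. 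One small infelicity: you announce that you will ``organize the argument as a genuine induction on $M$ rather than an open-ended iteration,'' but the $A=\varnothing$ branch you then describe \emph{is} an open-ended iteration (terminating either into the inductive case or into the disjoint-family case); the argument is fine, the framing sentence just doesn't quite match it. The preliminary reduction to $|A_n|=M$ for all $n$ is harmless but unnecessary in your version, since $A\subset A_n$ already gives $|A_n\setminus A|\le M-|A|$ without any equicardinality assumption.
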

\begin{proof}
First, since some cardinality $|A_n|$ is repeated for infinitely many $n$'s,
it suffices to prove the lemma assuming that $|A_n| = M$ for all $n\in\N$. The proof
goes by induction on $M$. The lemma holds trivially (with $S=\N$ and $A=\varnothing$) if $M = 0$. Suppose that for some $M_0\ge0$ the lemma holds for all $M\in[0,M_0]$ and let $(A_n)_{n\ge1}$ be a sequence of sets of cardinality $M_0+1$. 
For each $n\ge1$ choose one element $a_n\in A_n$ and let $A'_n=A_n\setminus\{a_n\}$. 
Since for all $n\ge1$, $|A'_n|=M_0$, the inductive hypothesis implies that there exist an infinite set $S'\subset\N$ and a finite set $A'\subset G$ such that \eqref{kop} is satisfied with the sets $A'_n$, $S'$ and $A'$ in place of the sets $A_n$, $S$ and $A$, respectively.
\begin{enumerate}
	\item If $A'\neq\varnothing$, we define $A_n''=A_n\setminus A'$. 
	For each $n\in S'$, $|A''_n|\le M_0$, so by the inductive hypothesis again, there exist an infinite subset $S''\subset S'$ and a finite set $A''$ such that \eqref{kop} is satisfied with the sets $A''_n$, $S''$ and $A''$ in place of the sets $A_n$, $S$ and $A$, respectively. Then \eqref{kop} is satisfied with $S=S''$ and $A=A'\cup A''$.	
	\item If $A'=\varnothing$, we consider two subcases:
\begin{enumerate}
	\item Suppose there is an infinite subset $S''\subset S'$ and $a\in G$ such that $a_n = a$ for all $n\in S''$. Then \eqref{kop} is satisfied with the sets $S=S''$ and $A=\{a\}$.
	\item Suppose that there is no $a$ as above. Assume that some $a\in G$ is contained in infinitely many sets $A_n$ with $n\in S'$. Since $a$ may equal $a_n$ for at most finitely many $n\in S'$, it follows that $a\in A_n'$ for infinitely many $n\in S'$. However, this is impossible as $A'=\varnothing$. We have shown that in case (b), any $a\in G$ is contained in finitely many sets $A_n$ with $n\in S'$. Then one can easily select an infinite subset $S''\subset S'$ such that the sets $A_n$ with $n\in S''$ are pairwise disjoint, and \eqref{kop} is satisfied with $S=S''$ and $A=\varnothing$.  
\end{enumerate}
\end{enumerate}
The proof is now complete.
\end{proof}

\subsection{Construction of the unambiguous markers}\label{unam}
In case (1), when $J_\beta$ is nonpermutable, we let $Z=\underline B$ and define \emph{unambiguous marker} as the block over the domain $Z$, very simply, as
$$
\zeta = \underline\beta.
$$

We now return to the case (2) (of permutable $J_\beta$) and describe the construction that solves the problem described earlier.

Consider the (countable) family of finite sets (subgroups) $H_g=g^{-1}H_{\underline\beta}g$, $g\in G$. By Lemma~\ref{kopacz}, there exists an infinite set $S\subset G$ and a finite set $H_0\subset G$ (which is in fact a finite subgroup) such that for any $g_1,g_2\in S$, $g_1\neq g_2$, we have
$$
H_{g_1}\cap H_{g_2}=H_0.
$$
As shown in subsection~\ref{resb}, there exists an $X$-admissible block $\gamma$ over a finite set $\Gamma\ni e$, resistant to the shifts by the elements of $H_0$. For technical reasons (which will be revealed in a moment), we need to ensure that $|J_\gamma |>|J_\beta|$. In case $|J_\gamma |\le|J_\beta|$, we define $\gamma'$ as $\gamma$ combined with $|J_\beta|+1$ copies of $\beta$ whose domains are $M$-apart from each other and from $\Gamma$.  Clearly,  $\gamma'$ is $X$-admissible and $|J_{\gamma'}|>|J_\beta|$. Then, we redefine $\gamma$ as $\gamma'$ and $\Gamma$ as $\Gamma'$. In any case, the enhanced block $\underline\gamma$ (whose domain is~$\underline\Gamma$) is resistant to the shifts by the elements of $H_0$, because it contains $\gamma$ as a subblock centered at $e$.

Now, we define 
\begin{equation}\label{barm}
\bar M=W^3RM^2
\end{equation}
and from the infinite set $S$ we select two elements, $g_1,g_2$, so that:
\begin{align}
&\text{the sets $\underline\Gamma$, $\underline Bg_1$ and $\underline Bg_2$ are $\bar M$-apart},\\
&g_1\notin J_\beta^{-1}J_\gamma J_\gamma^{-1}J_\gamma,\label{g1}\\
&g_2\notin J_\beta^{-1}J_1J_1^{-1}J_1,\label{g2}
\end{align}
where $J_1=J_\beta g_1\cup J_\gamma$. We are in a position to define the domain $Z$ of the unambiguous marker as follows:
\begin{equation}\label{domzeta}
Z=\underline\Gamma\cup\underline Bg_1\cup\underline Bg_2=B^2M^2(\Gamma\cup B\{g_1,g_2\}),
\end{equation}
and the \emph{unambiguous marker} as the block $\zeta$ over $Z$ satisfying
$$
\zeta|_{\underline\Gamma}=\underline\gamma, \ \ \zeta|_{\underline Bg_1}\approx\underline\beta, \ \ \zeta|_{\underline Bg_2}\approx\underline\beta.
$$

The set $J_0$ of centers of occurrences of the basic marker $\beta$ within $\zeta$ equals:
\begin{align}\label{jzeta1}
J_0&=J_\beta, \text { \ in case (1)},\\
J_0&=J_\gamma\cup J_\beta g_1\cup J_\beta g_2=J_1\cup J_\beta g_2, \text { \ in case (2)}.\label{jzeta2}
\end{align}
The unambiguous marker $\zeta$ and the set $J_0$ are shown on Figure~\ref{zetat}, just ignore the small orange circles labeled~$\underline\kappa_t$; their role will be explained in a moment.

\begin{figure}[h]
\includegraphics[width=\columnwidth]{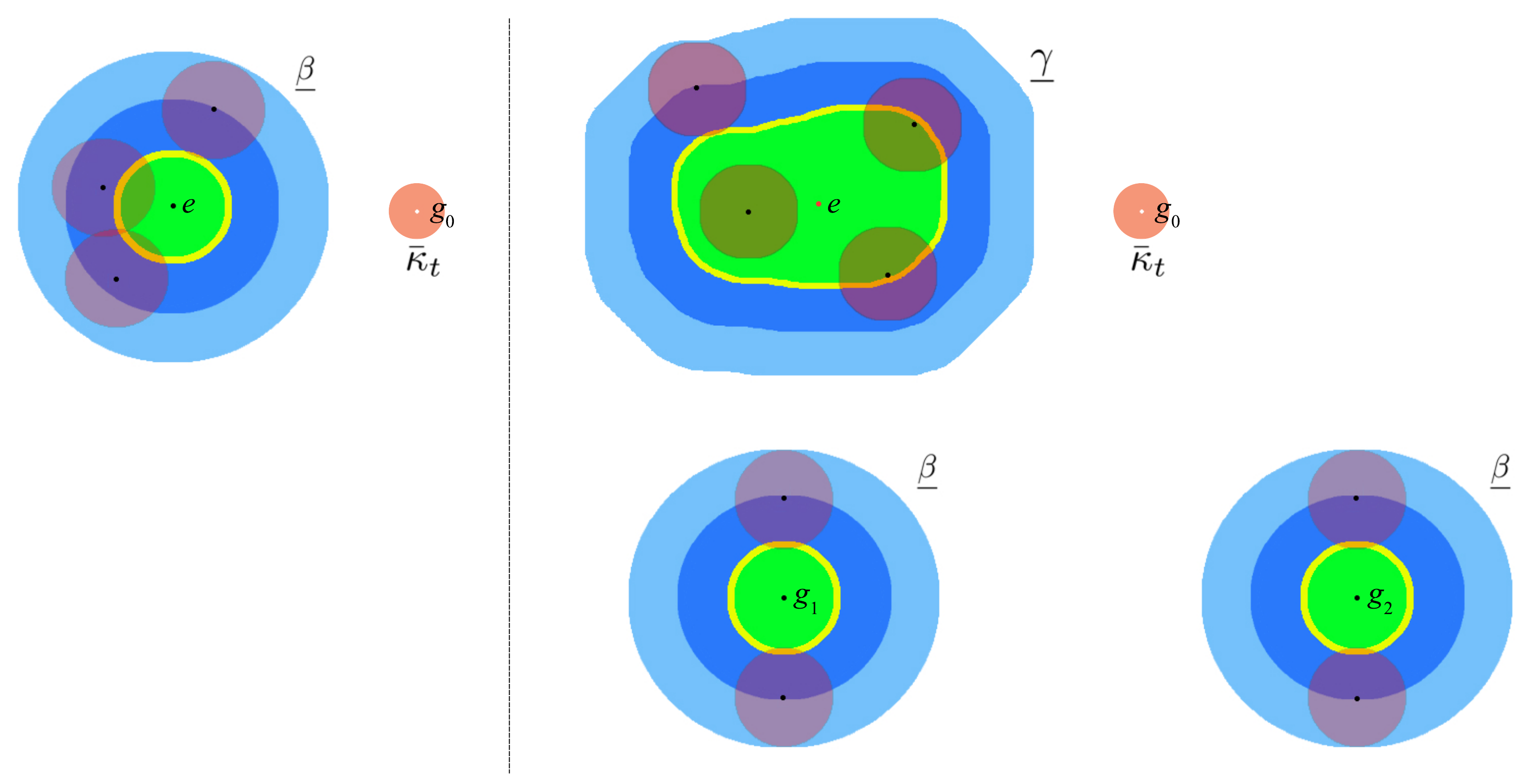}
\caption{The marker $\zeta_t$. On the left -- in case (1) the marker consists of two blocks: $\underline\beta$ centered at $e$ and $\bar\kappa_t$ centered at $g_0$. The set $J_\beta$ is shown as the four black dots including~$e$. On the right -- in case (2) the marker consists of four blocks: $\underline\gamma$ centered at $e$, two copies of $\underline\beta$ centered at $g_1$ and $g_2$, and $\bar\kappa_t$ centered at $g_0$. The set $J_\gamma$ consists of ten black dots (this time not including $e$): four within $\underline\gamma$ (this is $J_\gamma$), three within the copy of~$\underline\beta$ centered at $g_1$ (jointly these seven dots constitute $J_1$) and three dots within the copy of $\beta$ centered at $g_2$.}
\label{zetat}
\end{figure}

\begin{prop}\label{unamb}
The block $\zeta$ is \emph{unambiguous}, that is, either $J_0$ is nonpermutable, or $\zeta$ is resistant to the shifts by all elements $h\in G$ such that $J_0 h = J_0$.
\end{prop}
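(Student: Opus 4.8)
The plan is to analyze the two cases separately, exactly as the marker was built. In case (1), $J_0 = J_\beta$ is nonpermutable by the very hypothesis defining that case, so there is nothing to prove. The entire work is in case (2), where $J_0 = J_1 \cup J_\beta g_2$ with $J_1 = J_\beta g_1 \cup J_\gamma$; here I want to show that if $h \in G$ satisfies $J_0 h = J_0$, then $\zeta$ is resistant to the shift by $h$. Because $\underline\gamma$ is a subblock of $\zeta$ centered at $e$ and $\underline\gamma$ is resistant to the shifts by the elements of $H_0$, it suffices to prove that every such $h$ lies in $H_0$ (more precisely, in $H_0 \setminus \{e\}$ when $h \neq e$); resistance of a block is inherited by any larger block containing it as a centered subblock.

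So the core step is: \emph{$J_0 h = J_0$ implies $h \in H_0$.} The idea is to use the cardinality bookkeeping that was deliberately engineered, namely $|J_\gamma| > |J_\beta|$, together with the genericity conditions \eqref{g1} and \eqref{g2}. Since $\beta$ occurs in $\zeta$ exactly at the positions $J_0$, a shift $h$ with $J_0 h = J_0$ permutes these occurrences, hence permutes the ``clusters'' $J_\gamma$, $J_\beta g_1$, $J_\beta g_2$ up to the obvious constraint that clusters get mapped to clusters of compatible shape/size. Because $|J_\gamma| > |J_\beta| = |J_\beta g_1| = |J_\beta g_2|$, and because these three clusters are $\bar M$-apart hence far apart, the large cluster $J_\gamma$ must be sent to $J_\gamma$: a translate of $J_\gamma$ cannot be covered by (a piece of) a size-$|J_\beta|$ cluster, and mixing parts of two different clusters is ruled out by the large separation $\bar M$. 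This forces $J_\gamma h = J_\gamma$, i.e. $h \in H_\gamma := \{g : J_\gamma g = J_\gamma\}$, which in terms of the original enhanced basic marker means $h \in g_1^{-1} H_{\underline\beta} g_1$-type data --- here one must be slightly careful: $\underline\gamma$ was built to be resistant to $H_0$, and we need to relate the stabilizer of $J_\gamma$ to $H_0$. The two conditions \eqref{g1} and \eqref{g2} are exactly what prevent ``diagonal'' matchings that would let $h$ shuffle $J_\gamma$ into part of $J_\beta g_1$ and vice versa: condition \eqref{g1} says $g_1 \notin J_\beta^{-1} J_\gamma J_\gamma^{-1} J_\gamma$, which forbids any equation of the form $j_\beta g_1 = j_\gamma (j_\gamma')^{-1} j_\gamma''$ that would be needed for a translate to align $J_\gamma$ with a sub-part of $J_\beta g_1$; similarly \eqref{g2} does the same for the second copy relative to $J_1$. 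Once $J_\gamma h = J_\gamma$ is established, one descends: $h$ must then also fix each remaining cluster, so $J_\beta g_1 h = J_\beta g_1$ and $J_\beta g_2 h = J_\beta g_2$, giving $h \in g_1^{-1} H_{\underline\beta} g_1 \cap g_2^{-1} H_{\underline\beta} g_2 = H_{g_1} \cap H_{g_2} = H_0$ by the choice of $g_1, g_2 \in S$ from Lemma~\ref{kopacz}. Hence $h \in H_0$, and since $\underline\gamma$ (and therefore $\zeta$) is resistant to the shifts by the elements of $H_0$, we are done.

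I expect the main obstacle to be making precise the combinatorial argument that a measure-zero, shift-induced permutation of the occurrence-set $J_0$ must respect the cluster decomposition --- i.e. that $h$ cannot ``split'' a cluster across the gaps. This is where the $\bar M$-apartness in \eqref{domzeta} and the size inequality $|J_\gamma| > |J_\beta|$ do the heavy lifting, but one has to check that $\bar M = W^3RM^2$ is genuinely larger than the diameters of the relevant sets $J_\beta, J_\gamma$ (which live inside $BM^2B$ and $BM^2\Gamma$ respectively); this should follow from \eqref{H3}, \eqref{W6} and the requirement $B \supset RW^2$, though it may require enlarging $W$ or $B$ slightly. A secondary subtlety is the noncommutative conjugation: the stabilizer of $J_\beta g_i$ is $g_i^{-1} H_{\underline\beta} g_i$, not $H_{\underline\beta}$, which is precisely why we passed to the intersection $H_0$ via Lemma~\ref{kopacz} and why conditions \eqref{g1}--\eqref{g2} are phrased with the clusters already translated by $g_1$ and $g_2$; I would be careful to track all conjugations explicitly rather than suppressing them.
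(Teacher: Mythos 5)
Your reduction is right: in case (2) it suffices to show that $J_0 h=J_0$ forces $h\in H_0$, since $\underline\gamma$ (hence $\zeta$) is then resistant to the shift by $h$. But the mechanism you propose for this step is not the paper's, and as written it has genuine gaps.

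The paper never establishes that $h$ preserves the ``cluster decomposition'' $J_0=J_\gamma\sqcup J_\beta g_1\sqcup J_\beta g_2$. Instead it argues by contraposition: assume $h\notin H_0=H_{g_1}\cap H_{g_2}$ and split into (a) $h\notin H_{g_2}$, in which case some element of $J_\beta g_2$ is pushed into $J_1$, forcing by pigeonhole (since $J_0 h=J_0$) that some element of $J_1$ lands in $J_\beta g_2$, giving $g_2\in J_\beta^{-1}J_1 h$; together with $|J_1|>|J_\beta|$ one also gets $h\in J_1^{-1}J_1$, so $g_2\in J_\beta^{-1}J_1 J_1^{-1}J_1$, contradicting \eqref{g2}; and (b) $h\in H_{g_2}\setminus H_{g_1}$, handled symmetrically via \eqref{g1}. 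This uses only \eqref{g1}, \eqref{g2} and the cardinality inequalities, not the geometry of where the clusters sit.

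Your alternative route has two holes. First, the claim that ``$J_\gamma h$ cannot straddle the gaps because the clusters are $\bar M$-apart'' is not secured by the construction: you need $J_\gamma^{-1}J_\gamma$ to be small compared with the separation of $\underline\Gamma$, $\underline B g_1$, $\underline B g_2$, but $\Gamma$ may have been enlarged with $|J_\beta|+1$ extra copies of $\beta$ placed at arbitrary $M$-apart positions, so the ``diameter'' of $J_\gamma$ is not bounded in terms of $\bar M=W^3RM^2$. Enlarging $W$ or $B$ to fix this would change the objects $\bar X$, $\beta$, $\zeta$ already constructed, so it is not an available move at this point. Second, even granting $J_\gamma h=J_\gamma$, the ``descent'' step is unjustified: from $J_0 h=J_0$ and $J_\gamma h=J_\gamma$ you only get $(J_\beta g_1\cup J_\beta g_2)h=J_\beta g_1\cup J_\beta g_2$, which does not preclude $h$ swapping or mixing the two copies of $J_\beta$ (they have equal cardinality, so no size argument helps), and you give no argument ruling this out. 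The pigeonhole structure of the paper's proof is precisely what sidesteps both difficulties.
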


\begin{proof}In case (1), $J_0=J_\beta$ which is nonpermutable by assumption.
In case (2), first suppose that $J_0 h = J_0$, where $h\in H_0$. Then $\zeta$ is resistant to the shift by $h$ because it contains $\gamma$ as a subblock centered at $e$. To complete the proof, it remains to show that $J_0 h\neq J_0$, for any $h\notin H_0$. Suppose $J_0$ is permuted as a result of right multiplication by some $h\notin H_0$. By the definition of $H_0$, such a permutation either moves an element of $J_\beta g_1$ out of $J_\beta g_1$ or moves an element of $J_\beta g_2$ out of $J_\beta g_2$ (or both). There are thus two possibilities: 
\begin{enumerate}
	\item[(a)] some element of $J_\beta g_2$ is moved to $J_1$, or
	\item[(b)] $J_\beta g_2$ is permuted within itself (i.e., $h\in H_{g_2}=g_2^{-1}
	H_\beta g_2$) and some element of $J_\beta g_1$ is moved to $J_\gamma$. 
\end{enumerate}

In case (a), some element of $J_1$ must be moved to $J_\beta g_2$, which implies that $h\in J_1^{-1}J_\beta g_2$, equivalently, $g_2\in J_\beta^{-1}J_1h$. On the other hand, since $|J_1|>|J_\beta|$, some element of $J_1$ must be moved within $J_1$, implying that $h\in J_1^{-1}J_1$. Eventually, $g_2\in J_\beta^{-1}J_1J_1^{-1}J_1$, which contradicts~\eqref{g2} and eliminates case (a).

Case (b) is eliminated similarly: Some element of $J_\gamma$ must be moved to $J_\beta g_1$ which implies that $g_1\in J_\beta^{-1}J_\gamma h$. Since $|J_\gamma|>|J_\beta|$, some element of $J_\gamma$ must be moved within $J_\gamma$, implying that $h\in J_\gamma^{-1}J_\gamma$. Eventually, $g_1\in J_\beta^{-1}J_\gamma J_\gamma^{-1}J_\gamma$, a contradiction to~\eqref{g1}. The proof is now complete.
\end{proof}

\subsection{Appending a block encoding the index of a shape}\label{enco}
We will now equip the unambiguous marker $\zeta$ with an additional ``appendage'' responsible for encoding a natural number $t\in\{1,2,\dots,r_\eps\}$ representing the index of a shape $S_1,S_2,\dots,S_{r_\eps}$ of $\TT$. 

The set $\bar X$ is infinite (for example, as a consequence of positive entropy); therefore, there exists a finite subset $L\ni e$ of $G$ such that there are at least $r_\eps$ different $\bar X$-admissible blocks over $L$. We choose exactly $r_\eps$ such blocks and denote them by $\bar\kappa_1,\bar\kappa_2,\dots,\bar\kappa_{r_\eps}$. We fix an element $g_0\in G$ so that $Lg_0$ is $\bar M$-apart from the domain $Z$ of the unambiguous marker $\zeta$. Finally, for each $t\in\{1,2,\dots,r_\eps\}$, we define the marker $\zeta_t$ as the block over the set $Z_0=Z\cup Lg_0$ such that
$$
\zeta_t|_{Z}=\zeta, \ \ \zeta_t|_{Lg_0}\approx\bar\kappa_t.
$$
This step completes the construction of the markers (see Figure~\ref{zetat}).

\begin{rem}
Although the domain $L$ of $\bar\kappa_t$ is shown in Figure \ref{zetat} as relatively small, it may be much larger than $\underline B$ and $\underline\Gamma$. The proportion depends on the entropy gap $\htop(X)-h(\nu)$. A small gap yields small $\eps$ (see \eqref{eps}) and thus large $r_\eps$ (see footnote~5) causing the set $L$ to be large.
\end{rem}

\subsection{Reinforcing almost $\bar X$-admissible patterns}
In the construction of the isomorphism $\Psi$ we will need to glue patterns that are mostly $\bar X$-admissible except that they contain the markers. We will call such patterns ``almost $\bar X$-admissible''. The following is the formal definition:

\begin{defn}\label{al}Let $E\subset G$ be any set. An element $x\in X$ is called \emph{almost $\bar X$-admissible with the exceptional set $E$} if there exist a maximal $W$-separated set $V$ such that $x|_{RV\setminus E}=\rho_V|_{RV\setminus E}$. If furthermore $A\subset G$ is such that $R^2E\subset A$ then the pattern $\alpha=x|_A$
is called an \emph{almost $\bar X$-admissible pattern with the exceptional set $E$}
(see Figure~\ref{almost}).\footnote{If $x$ is an almost $\bar X$-admissible element with the exceptional set $E$, there is no guarantee that $x|_{G\setminus E}$ is $\bar X$-admissible. There may be no way to insert missing copies of $\rho$ (with domains intersecting $E$) without changing $x$ over the set $M^2R^2E\setminus E$. The same applies to almost $\bar X$-admissible patterns.}
\end{defn}

\begin{figure}[h]
\includegraphics[width=\columnwidth]{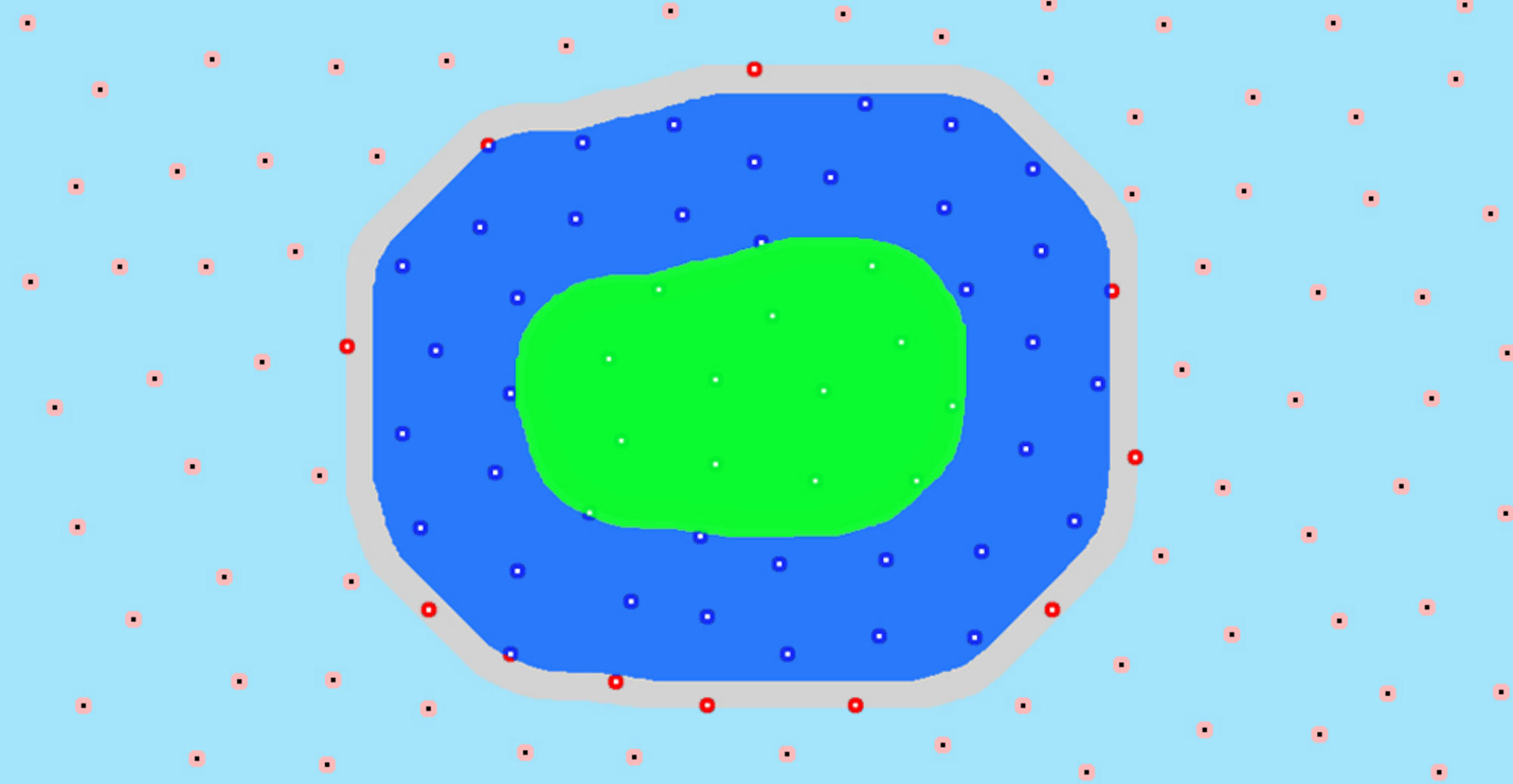}%
\caption{The whole area is an almost $\bar X$-admissible element $x$ with the exceptional set $E$ (the green patch at the center). The grid $\rho_V$ is shown as small circles (rose, red and dark blue). The pattern $\alpha$ (with domain $A$ -- union of blue and green) is cut out from $x$ and inherits part of the grid disrupted within the green set $E$ (complete and incomplete dark blue circles). The reinforced pattern $\lceil\alpha\rceil$ includes $\alpha$ and the copies of $\rho$ from the grid whose domains $Rv$ are not contained in $A$ but are not $M$-apart from $A$ (red and red/blue circles). The domain of the binary pattern $\chi_\alpha$ is $RM^2A$ (green/blue/gray) and the symbols $1$ of $\chi_\alpha$ are marked by the white dots within the green, blue and gray areas.}
\label{almost}
\end{figure}

Clearly, every $\bar X$-admissible pattern is almost $\bar X$-admissible (with $E=\varnothing$). Also, if $\alpha$ with domain $A\ni e$ is an $X$-admissible block then the enhanced block $\underline\alpha$ (with domain $\underline A$) is almost $\bar X$-admissible with the exceptional set $M^2A$ (green and yellow area in Figure~\ref{enha}), because it is cut from $\bar x^*_\alpha$ obtained by inserting $\alpha$ into $\bar x^*\in\bar X$.

Before we glue almost $\bar X$-admissible patterns, we need to prepare them in a special way, which we call ``reinforcing''.

Let $\alpha$ be an almost $\bar X$-admissible pattern with the exceptional set $E$. Let $x\in X$ and $V$ be as in Definition~\ref{al}. The copies of $\rho$ that make up the grid $\rho_V$ can be classified as follows (see Figure~\ref{almost}):
\begin{enumerate}
	\item[a)] with domains $M$-apart from $A$ (pink), 
	\item[b)] with domains disjoint from $A$ but not $M$-apart from $A$ (red),
	\item[c)] with domains partly contained in $A$ (red/blue),
	\item[d)] with domains fully contained in $A$ and disjoint from $E$ (blue),
	\item[e)] with domains intersecting $E$ (incomplete blue and completely missing, whose centers are marked as the white dots inside $E$).
\end{enumerate}
Since $R^2E\subset A$, the classes c) and e) are disjoint, and hence all classes are pairwise disjoint. We need to create a pattern that combines $\alpha$ with all copies of $\rho$ from classes b) and c). The copies of $\rho$ from class d) are included in $\alpha$, so it does not matter whether we add them or not. However, adding them will make the description of the combined pattern a bit easier. Namely, copies of $\rho$ belonging to the classes b), c), and d) can be identified by the property that their centers lie in the set $RM^2A\setminus RE$. We let
\begin{equation}\label{reinA}
\lceil A\rceil=A\cup R(V\cap (RM^2A\setminus RE))\subset R^2M^2A
\end{equation}
and we define the \emph{reinforced pattern} as 
$$
\lceil\alpha\rceil=x|_{\lceil A\rceil}.
$$
Clearly, this pattern is $X$-admissible, as it occurs in $x$. Observe that
$$
\lceil\alpha\rceil|_A=\alpha, \text{ \ and \ } \lceil\alpha\rceil|_{Rv}\approx\rho\text{ for all }v\in V\cap(RM^2A\setminus RE).
$$

We also let
$$
\chi_\alpha=\mathbbm1_V|_{RM^2A}.
$$ 
\begin{rem}\label{takise}
The binary pattern $\chi_\alpha$ carries more information than just the set $V\cap RM^2A$. It also ``memorizes'' (in the form of symbols $0$) where the elements of $V$ {\bf do not occur} within $RM^2A$. 
\end{rem}

The following property of almost $\bar X$-admissible patterns is crucial for the efficiency of our markers.

\begin{lem}\label{nobeta}
If $\alpha$ with domain $A$ is an almost $\bar X$-admissible pattern with the exceptional set $E$, and the basic marker $\beta$ occurs in $\alpha$ centered at some $g$ then $g\in BE$ (that is, the domain of each occurrence of $\beta$ within $\alpha$ must intersect $E$).
\end{lem}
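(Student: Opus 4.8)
The plan is to argue by contradiction: suppose the basic marker $\beta$ occurs in $\alpha$ centered at some $g$ but $Bg\cap E=\varnothing$, and derive a contradiction by exhibiting an occurrence of the forbidden block $\rho$ inside $\beta$. Recall that $\rho$ was chosen to be $X$-admissible but not $X'$-admissible, while $\beta$ is $X'$-admissible; so finding $\rho$ as a subblock of $\beta$ is the desired contradiction.

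First I would locate a center of the grid deep inside the domain of the occurrence of $\beta$. Write $\alpha=x|_A$ and let $V$ be the maximal $W$-separated set provided by Definition~\ref{al}, so that $x|_{RV\setminus E}=\rho_V|_{RV\setminus E}$ and $R^2E\subset A$. By property (I) of maximal $W$-separated sets there is an element $v\in V\cap W^{-1}Wg$; since $W$ is symmetric, $W^{-1}W=W^2$, so $v\in W^2g$ and therefore $Rv\subset RW^2g\subset Bg$, using $B\supset RW^2$. In particular $Rv\subset Bg\subset A$.

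Next I would check that this copy of $\rho$ is actually present in $\alpha$ at $Rv$: since $Bg\cap E=\varnothing$ and $Rv\subset Bg$, we have $Rv\subset RV\setminus E$, hence $x|_{Rv}=\rho_V|_{Rv}\approx\rho$. As $Rv\subset Bg$ and $x|_{Bg}\approx\beta$, the block $\rho$ occurs as a subblock of $\beta$. Since $\beta$ occurs in some $x'\in X'$, so does $\rho$, making $\rho$ $X'$-admissible — a contradiction. Therefore $Bg\cap E\neq\varnothing$, i.e.\ $g\in BE$ (here symmetry of $B$ gives the equivalence between $Bg\cap E\neq\varnothing$ and $g\in BE$). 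I do not anticipate any real obstacle; the only point requiring care is that $Rv$ lies inside $Bg$ and away from $E$, which is precisely where the inclusions $B\supset RW^2$ (ensuring a grid point with its whole $R$-copy fits inside $Bg$) and the symmetry of $W$ are used.
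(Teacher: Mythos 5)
Your proof is correct and follows essentially the same route as the paper: use property (I) of maximal $W$-separated sets to locate a grid point $v\in V\cap W^2g$, observe $Rv\subset RW^2g\subset Bg$ and that $Rv$ avoids $E$, so a full copy of $\rho$ sits inside the purported occurrence of $\beta$, contradicting $X'$-admissibility of $\beta$. The only extra remark you make — that symmetry of $B$ is what converts $Bg\cap E\neq\varnothing$ into $g\in BE$ — is a small bit of care the paper leaves implicit.
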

\begin{proof}
Let $x$ and $V$ be as in Definition~\ref{al}. Suppose that $Bg\subset A$ and $Bg\cap E=\varnothing$. By property (I) of the maximal $W$-separated sets, $W^2g$ contains at least one element $v\in V$ and then $Rv\subset RW^2g\subset Bg$. This shows that at least one complete copy of $\rho$ from the grid $RV$ falls within $Bg$. Since $Bg\subset A$ and $Bg\cap E=\varnothing$ this complete copy of $\rho$ occurs in $\alpha$. As $\rho$ does not occur as a subblock of $\beta$ (since $\beta$ is $X'$-admissible, while $\rho$ is not), the latter cannot occur in $\alpha$ over $Bg$.
\end{proof}

\subsection{Gluing almost $\bar X$-admissible patterns} 

This subsection is devoted to proving the following important lemma:

\begin{lem}\label{glu}Let $\alpha_i$ be finitely or countably many almost $\bar X$-admissible patterns with domains $A_i$ and exceptional sets $E_i$, $i=1,2,\dots,k$ (or $i=1,2,\dots$). Suppose that all the sets $A_i$ are pairwise $\bar M$-apart. Then the pattern $\alpha$ over $A=\bigcup_iA_i$ such that, for each $i$, $\alpha|_{A_i}=\alpha_i$, is almost $\bar X$-admissible with the exceptional set $E=\bigcup_iE_i$.
\end{lem}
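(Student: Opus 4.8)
The plan is to reduce everything to the specification property of $X$ together with the bookkeeping already set up around the grid $\rho_V$. First I would fix, for each $i$, an element $x_i\in X$ and a maximal $W$-separated set $V_i$ witnessing that $\alpha_i$ is almost $\bar X$-admissible with exceptional set $E_i$, so that $\alpha_i = x_i|_{A_i}$ and $x_i|_{R V_i\setminus E_i} = \rho_{V_i}|_{R V_i\setminus E_i}$. The goal is to produce a single $x\in X$ and a single maximal $W$-separated set $V$ such that $x|_{A_i}=\alpha_i$ for every $i$ and $x|_{RV\setminus E}=\rho_V|_{RV\setminus E}$ where $E=\bigcup_i E_i$; taking $\alpha = x|_A$ then finishes the proof, since $R^2 E_i\subset A_i\subset A$ gives $R^2E\subset A$ as required in Definition~\ref{al}.

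The first step is to apply specification to glue the patterns $\alpha_i$ themselves. The domains $A_i$ are pairwise $\bar M$-apart, and $\bar M = W^3RM^2 \supset M$ (recall $e\in M\subset W$ and $e\in R$), so in particular they are pairwise $M$-apart; since each $\alpha_i$ is $X$-admissible (it is cut from $x_i\in X$), the specification property — in its countable form noted right after Definition~\ref{spec} — yields an $x'\in X$ with $x'|_{A_i}=\alpha_i$ for all $i$. The second step is to reinstate a grid on all of $G$ that agrees with each local grid near $A_i$. Here I would choose a maximal $W$-separated set $V$ that contains $\bigcup_i \bigl(V_i\cap W^2 A_i\bigr)$; this is legitimate provided that union is itself $W$-separated, which follows because the sets $W^2A_i$ are pairwise $M$-apart — more carefully, one checks using $\bar M\supset W^2 W^2$ (note $\bar M = W^3RM^2\supset W^4$, as $R,M\ni e$) that $W(V_i\cap W^2A_i)$ and $W(V_j\cap W^2A_j)$ are disjoint for $i\ne j$, so no two centers from different $V_i$'s lie in a common $W$-translate, and within a single $V_i$ the set is already $W$-separated. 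Then $V$ is extended to a maximal $W$-separated set. Now on each set $A_i$ the grid $\rho_V$ restricted to $RV\cap$ (a neighbourhood of $A_i$) coincides with $\rho_{V_i}$ there, because $V$ and $V_i$ have the same centers inside $W^2A_i$ and hence, by property (II) of maximal $W$-separated sets together with $W\supset MR$, the same tiles $Rv$ meet $A_i$ — so the grid constraint $x|_{RV\setminus E}=\rho_V|_{RV\setminus E}$ is compatible with $x|_{A_i}=\alpha_i$.

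The third step is to run the specification gluing once more, now incorporating the grid. The copies of $\rho$ making up $\rho_V$ have domains $Rv$, $v\in V$, and these are pairwise $M$-apart (since $W\supset MR$ and $V$ is $W$-separated); moreover each such $Rv$ is $M$-apart from the domains of the other copies and — for those $v$ with $Rv$ far from all $A_i$ — we are free to impose $\rho$ there, while for $v$ with $Rv$ meeting some $A_i$ the value $\alpha_i$ already carries the correct $\rho$-data away from $E_i$ by the previous paragraph. Collecting all the constraints: impose $\alpha_i$ on $A_i$ for every $i$, and impose $\rho$ on $Rv$ for every $v\in V$ with $Rv\cap E=\varnothing$. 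I would argue these constraints have pairwise $M$-apart domains (the only overlaps are $A_i$ with $Rv$, and there they are consistent, so one treats the union $A_i\cup\bigcup\{Rv: v\in V, Rv\cap E=\varnothing, Rv\cap A_i\ne\varnothing\}$ as a single admissible pattern, exactly as in the "reinforced pattern" construction of~\eqref{reinA}), and each is $X$-admissible. Countable specification then produces the desired $x\in X$. Finally $\alpha:=x|_A$ satisfies $\alpha|_{A_i}=\alpha_i$, and $V$ witnesses that $\alpha$ is almost $\bar X$-admissible with exceptional set $E=\bigcup_i E_i$, since $x|_{RV\setminus E}=\rho_V|_{RV\setminus E}$ by construction.

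The step I expect to be the main obstacle is the second one: producing a single maximal $W$-separated set $V$ that is simultaneously compatible with all the local sets $V_i$ near their respective $A_i$'s. The delicate point is that "$A_i$ pairwise $\bar M$-apart" must be quantitatively strong enough to guarantee that the partial grids patch together without conflict along the boundaries — i.e. that the margin $\bar M = W^3RM^2$ is large enough that the tiles $Rv$ relevant to $A_i$ never reach near $A_j$. I would verify the needed inclusion ($\bar M \supset$ a suitable product of $W$'s and $R$'s and $M$'s) explicitly; the definition $\bar M = W^3RM^2$ in~\eqref{barm} appears to have been chosen with precisely this kind of gluing in mind, so the check should go through, but it is where the real care is required.
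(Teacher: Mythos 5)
Your high-level strategy matches the paper's: glue the $\alpha_i$ by specification of $X$, and in parallel produce a single maximal $W$-separated set $V$ that agrees with each local $V_i$ in a neighbourhood of $A_i$. You also correctly flag the construction of $V$ as the delicate point. Unfortunately, the construction you propose does not achieve the required agreement, and this is a genuine gap rather than a routine verification you postpone.

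The problem is your claim that if $V$ is a maximal $W$-separated set containing $V_0:=\bigcup_i\bigl(V_i\cap W^2A_i\bigr)$, then ``$V$ and $V_i$ have the same centers inside $W^2A_i$.'' Only one inclusion is automatic. The extension to maximality can \emph{add} spurious centers inside $W^2A_i$: for $v'\in W^2A_i\setminus V_i$, the element $v''\in V_i\cap W^2v'$ which blocks $v'$ from entering $V_i$ may lie in $W^4A_i\setminus W^2A_i$ and hence be absent from your seed $V_0$, so nothing prevents the maximality-extension from placing $v'$ into $V$. Any such $v'\in V\cap RA_i$ with $v'\notin V_i$ forces, through the grid condition $x|_{RV\setminus E}=\rho_V|_{RV\setminus E}$, the constraint $x|_{Rv'}\approx\rho$, which is generally inconsistent with $x|_{A_i}=\alpha_i$, because $\alpha_i$ carries $\rho$ only over tiles $Rv$ with $v\in V_i$. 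Your final specification step would then be fed contradictory constraints. (A secondary slip: your inclusion $\bar M=W^3RM^2\supset W^4$ does not hold; $e\in R$ and $e\in M$ only give $\bar M\supset W^3$, and nothing forces $W\subset RM^2$ --- indeed $W$ is chosen much larger than $MR$.)

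The missing tool is the one the paper deploys at exactly this point: instead of gluing the sets $V_i$ directly, glue the \emph{binary patterns} $\chi_{\alpha_i}=\mathbbm1_{V_i}|_{RM^2A_i}$, using the specification property of the $K$-shift $\Omega_W$ from Lemma~\ref{specK} (margin $WW^{-1}W=W^3$, and the domains $RM^2A_i$ are $W^3$-apart precisely because $W^3\cdot RM^2A_i=\bar M A_i$ and the $A_i$ are $\bar M$-apart). As Remark~\ref{takise} stresses, $\chi_{\alpha_i}$ records not only where $V_i$ has centers inside $RM^2A_i$ but also where it does \emph{not}; the resulting $V$ with $\mathbbm1_V|_{RM^2A_i}=\chi_{\alpha_i}$ therefore agrees with $V_i$ on all of $RM^2A_i$, and spurious centers are ruled out by the zeros of $\chi_{\alpha_i}$. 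Enlarging the power of $W$ in your seed set does not fix this cleanly, both because the $\bar M$-apartness does not supply the required separation (see the secondary slip above) and because the mechanism of extension-to-maximality has no way to respect the ``absence'' information. Once $V$ is obtained from the $\Omega_W$-specification, the final gluing --- with the reinforced patterns $\lceil\alpha_i\rceil$ and copies of $\rho$ at the remaining $v\in V\setminus\bigcup_iRM^2A_i$ --- proceeds essentially as you describe.
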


\begin{rem}\label{spxb}
In particular, the lemma applied to $\bar X$-admissible patterns implies that the system $(\bar X,G)$ has the specification property with the margin $\bar M$.
\end{rem}

\begin{proof}[Proof of Lemma~\ref{glu}]
We need to construct an almost $\bar X$-admissible element $x\in X$ with the exceptional set $E$ such that, for each $i$, $x|_{A_i}=\alpha_i$. This includes creating a maximal $W$-separated set $V$ satisfying $x|_{RV\setminus E}=\rho_V|_{RV\setminus E}$. 

Since the domains $A_i$ are $\bar M$-apart, where $\bar M=W^3RM^2$ (see \eqref{barm}), and $W\supset MR$ (see~\eqref{H3}), the domains $\lceil A_i\rceil$ of the reinforced patterns $\lceil\alpha_i\rceil$ are $M$-apart (see~\eqref{reinA}) (in fact, they are apart by a much larger set $W^2M$). By the specification property of $X$, the patterns $\lceil\alpha_i\rceil$ jointly form an $X$-admissible pattern. Furthermore, the domains of the binary patterns $\chi_{\alpha_i}$ are the sets $RM^2A_i$ and they are $W^3$-apart from each other. 
By the specification property of $\Omega_W$ (see Lemma~\ref{specK}), there exists a maximal $W$-separated set $V$ whose indicator function $\omega=\mathbbm1_V$ satisfies, for each $i$,
$$
\omega|_{RM^2A_i}=\chi_{\alpha_i}.
$$
If $v\in V$ falls within $RM^2A_i$ for some $i$, then $\chi_{\alpha_i}(v)=1$. Let $V^*=V\setminus\bigcup_iRM^2A_i$. If $v\in V^*$ then $Rv$ is $M$-apart from each of the sets $A_i$. The set $Rv$ is also $M$-apart from any other set of the form $Rv$ with $v\in V$, because $W\supset MR$. Thus, $Rv$ with $v\in V^*$ is $M$-apart from $\lceil A_i\rceil$ for all $i$. This implies that the patterns $Rv$ with $v\in V^*$ jointly with the patterns $\lceil\alpha_i\rceil$ form an $X$-admissible pattern. Thus, there exists an $x\in X$ such that $x|_{\lceil A_i\rceil}=\lceil\alpha_i\rceil$ (in particular, $x|_{A_i}=\alpha_i$) for each $i$, and $x|_{Rv}\approx\rho$ for all $v\in V^*$. Taking into account the copies of $\rho$ occurring within the reinforced blocks $\lceil\alpha_i\rceil$, we can see that $x$ satisfies the desired condition $x|_{RV\setminus E}=\rho_V|_{RV\setminus E}$, i.e., it is almost $\bar X$-admissible with the exceptional set $E$ (see Figure~\ref{gluing}).
\end{proof}

\begin{figure}[h]
\includegraphics[width=\columnwidth]{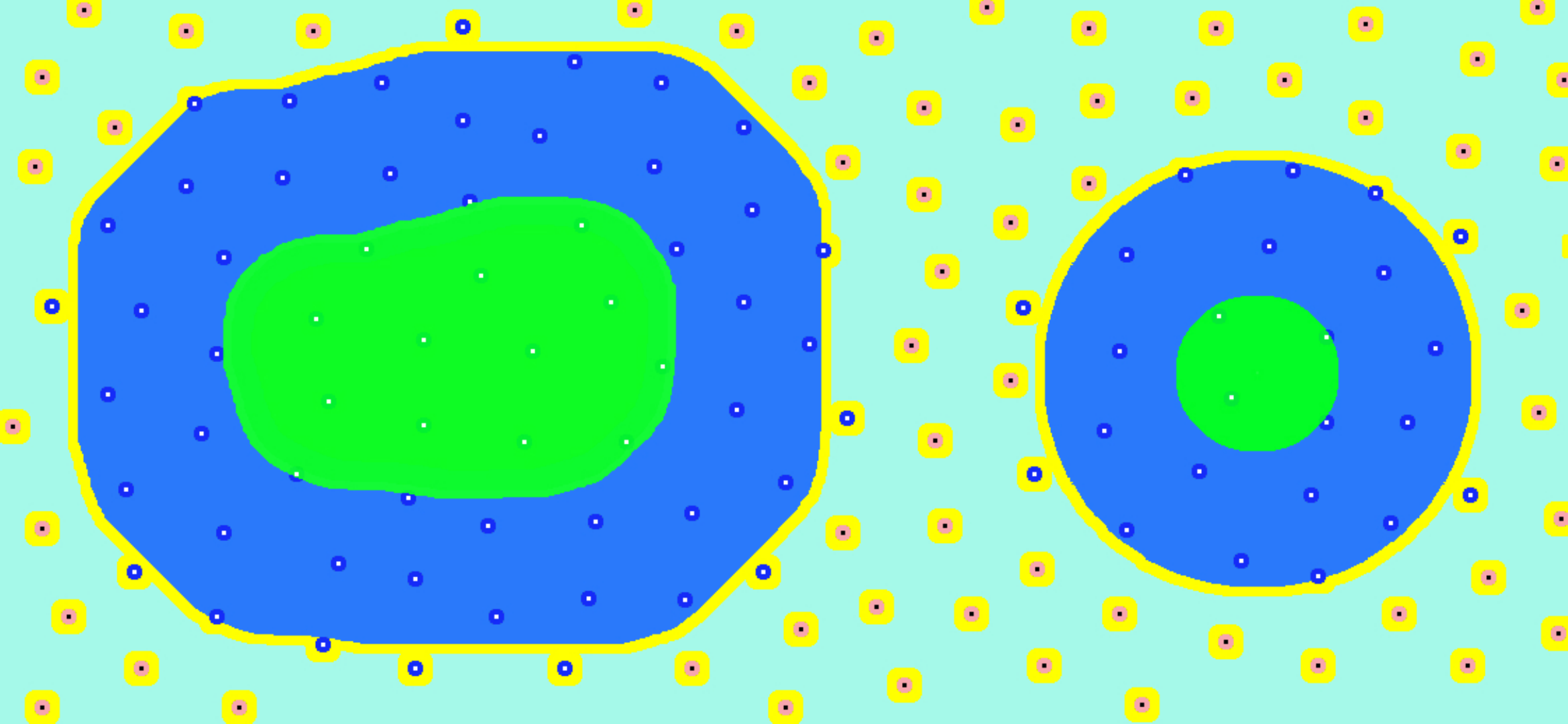}
\caption{The binary patterns $\chi_{\alpha_i}$ (the white dots inside the blue and green areas) are completed to the indicator function of a maximal $W$-separated set $V$ (white and black dots). The reinforced patterns $\lceil\alpha_i\rceil$ (shown in blue) and the (pink) copies of $\rho$ centered at the added (black) elements of $V$ are $M$-apart from each other (the yellow margins around them are disjoint). Thus their union is $X$-admissible and can be realized in an $x\in X$.} 
\label{gluing}
\end{figure}

\section{Construction of the isomorphism}
We pass to the construction of the desired isomorphism $\Psi:(Y,\nu,G)\to (X,\mu,G)$. For $\nu$-almost every $y\in Y$ we need to create a symbolic element $x=\Psi(y)\in X$, so that $\Psi$ is invertible, measurable, and shift-equivariant. 

\subsection{Review of what has been done so far}
Let us briefly review the situation in which we begin the construction.
The review corresponds to the sketch in subsection~\ref{skecz}, but now we can establish some important details that have been omitted in the sketch.
We are given a free \tl\ system $(Y,G)$ and an ergodic measure $\nu$ on $Y$ with $h(\nu)<\htop(\bar X)$, where $(\bar X,G)$ is a special proper subsystem of $(X,G)$ (both $\bar X$ and $X$ are subshifts over the alphabet $\Lambda$).
We also have a uniquely ergodic \tl\ subshift $(Y',G)$ with an alphabet $\Delta$, whose unique \im\ is isomorphic to $\nu$, thus $\htop(Y')<\htop(\bar X)$.
Let us denote
\begin{equation}\label{del}
\delta=\tfrac12(\htop(\bar X)-\htop(Y')).
\end{equation}
In addition, we have defined the ``basic marker'' $\beta$, which does not occur in $\bar X$ and we have established the margin $\bar M$ for gluing almost $\bar X$-admissible patterns, described in Lemma~\ref{glu}.

Let us now specify $\eps$. We let it be so small that 
\begin{equation}\label{eps}
5|\bar M|\eps<1\ \ \text{and}\ \ \Theta^\eps<2^\delta,
\end{equation} 
where $\Theta=|\Delta|^2|\Lambda|^{5|\bar M|}$. The choice of $\eps$ determines a natural number $r_\eps$ that equals the number of shapes of a quasitiling associated with the subshift $(Y,G)$ (to be discussed in a moment).
Once we know the value of $r_\eps$, we can define the domain $L$ and $r_\eps$ different $\bar X$-admissible blocks $\bar\kappa_t$ over $L$. Finally, we define the ``ultimate'' markers $\zeta_t$ with domain $Z_0 = Z\cup Lg_0$  so that $\zeta_t|_Z=\zeta$ and $\zeta_t|_{Lg_0}\approx\overline\kappa_t$.

Next, we have the (commuting) diagram of factor maps:
\begin{align*}
Y'\overset{\pi}{\longleftarrow}Y\overset{\phi}{\longrightarrow}\TT\overset{\tilde\phi}{\longrightarrow}&\tilde\TT\overset{\hat\phi}{\longrightarrow}\hat\TT
\\[-15pt]
\rotatebox{330}{$\xrightarrow{\makebox[1.5cm]{$\tilde\Phi$}}$}\!\!&\
\rotatebox{90}{$\!\!\!\!\!\!\!\!\!\longrightarrow$}
\\[-10pt]
&\,\tilde\TTT
\end{align*}
where 
\begin{itemize}
	\item $\pi$ is a measure-theoretic isomorphism between $\nu$ on $Y$ and the unique \im\ on $Y'$,
	\item $\phi$ is a \tl\ factor map onto a dynamical quasitiling $\TT$ with $r_		\eps$ shapes selected from the F\o lner \sq\ $\F=(F_n)_{n\ge1}$; the smallest shape is $F_{n_0}$ and we can choose $n_0$ as large as we wish,  
	\item $\tilde\phi$ is a \tl\ factor map onto a disjoint dynamical quasitiling $\tilde\TT$ whose tiles are $(1-2\eps)$-subsets of those of $\TT$ and all shapes $\tilde S$ of $\tilde\TT$ contain the set $K=\bar M^3Z_0$ (see Lemma~\ref{Fintildas} (iv)) -- note that then $\bar M^2Z_0\subset \tilde S_{\bar M}$,
	\item $\hat\phi$ is a measure-theoretic factor map to an improper dynamical tiling $\hat\TT$ whose tiles are $2\eps$-enlargements of those of $\tilde\TT$, 
	\item $\tilde\Phi$ is a \tl\ factor onto a F\o lner, congruent system of disjoint quasitilings $\tilde\TTT$,
	\item the upward arrow is the projection of $\tilde\TTT$ onto the first ``level''
$\tilde\TT_1$ which coincides with $\tilde\TT$ (see Corollary~\ref{cor_quasitiling}).
\end{itemize}
By choosing $n_0$ large enough and by Lemma~\ref{sepcen}, we can arrange that 
\begin{equation}\label{dopo}
\text{for any $\tilde\T\in\tilde\TT$ the set $C(\tilde\T)$ of centers of the tiles of $\tilde\T$ is $Z_0^{-1}Z_0$-separated}.
\end{equation}
Since we aim to build a measure-theoretic map $\Psi:Y\to X$, defined $\nu$-almost everywhere, we will build $x=\Psi(y)$ for $y$ belonging to a subset $Y_0\subset Y$ such that $\nu(Y_0)=1$, $\pi|_{Y_0}$ is defined and injective, and the composition $\hat\phi\circ\tilde\phi\circ\phi(y)$ is defined. Clearly, such a set $Y_0$ exists.

\medskip\noindent
\subsection{Injection from $Y'$-admissible blocks to $\bar X$-admissible blocks}
Since all shapes of $\TT$ are members of the F\o lner \sq\ $\F=(F_n)_{n\ge1}$, with indices at least $n_0$, by \eqref{del} and Definition~\ref{htop}, choosing $n_0$ large enough we can assure that for all shapes $S$ of $\TT$, we have
\begin{equation}\label{en1}
|\B_{\bar X}(S)|>|\B_{Y'}(S)|\cdot2^{\delta|S|},
\end{equation}
where $\B_{\bar X}(S)$ and $\B_{Y'}(S)$ denote the collections of blocks with domain $S$ that occur in $\bar X$ and $Y'$, respectively. 
Since any shape $\hat S$ of $\hat\TT$ is a $2\eps$-enlargement of some $\tilde S\subset S$,
we have
\begin{equation}\label{en2}
|\B_{Y'}(\hat S)|\le|\B_{Y'}(\tilde S)|\cdot |\Delta|^{2\eps|\tilde S|}\le|\B_{Y'}(S)|\cdot |\Delta|^{2\eps|S|}.
\end{equation}
Since $\tilde S$ is a $(1-2\eps)$-subset of $S$, by Proposition~\ref{ksets} (3), the $\bar M$-core of $\tilde S$, $\tilde S_{\bar M}$, is a $(1-4|\bar M|\eps)$-subset of $S$. Recall that $\bar M^2Z_0\subset \tilde S_{\bar M}$. Define 
$$
\tilde S_0=\tilde S_{\bar M}\setminus\bar M^2Z_0.
$$ 
The ``hole'' $\bar M^2Z_0$ at the center of $\tilde S_0$ will later be used for the insertion of a marker.
Because we subtract a constant set (not depending on $n_0$), by choosing $n_0$ large enough, we can arrange that
$\tilde S_0$ is a $(1-5|{\bar M}|\eps)$-subset of $S$. Then
\begin{equation}\label{en3}
|\B_{\bar X}(\tilde S_0)|\cdot |\Lambda|^{5|{\bar M}|\eps|S|}\ge|\B_{\bar X}(S)|.
\end{equation}
Combining the inequalities \eqref{en1}, \eqref{en2} and \eqref{en3}, we get
\begin{equation}\label{en4}
|\B_{Y'}(\hat S)|<|\B_{\bar X}(\tilde S_0)|\cdot\frac{|\Delta|^{2\eps|S|}\cdot|\Lambda|^{5|{\bar M}|\eps|S|}}{2^{\delta|S|}}=|\B_{\bar X}(\tilde S_0)|\cdot\Bigl(\frac{\Theta^\eps}{2^\delta}\Bigr)^{|S|}.
\end{equation}
Recall that $\eps$ was selected so small that $\Theta^\eps<2^\delta$ (see~\eqref{eps}), and thus
\begin{equation}\label{en5}
|\B_{Y'}(\hat S)|<|\B_{\bar X}(\tilde S_0)|.
\end{equation}
Therefore, whenever $\tilde S$ is a shape of the quasitiling $\tilde\TT$ and $\hat S$ is a shape of the tiling $\hat\TT$, such that there exist two tiles sharing a common center: $\tilde T=\tilde Sc$ of $\tilde\T\in\tilde\TT$ and $\hat T=\hat Sc$ of $\hat\T=\hat\phi(\tilde\T)$, then there exists an injection $\theta_{\hat S,\tilde S}$ from $\B_{Y'}(\hat S)$ into $\B_{\bar X}(\tilde S_0)$. 

Now, given $y\in Y_0$, we let $\tilde\T=\tilde\phi\circ\phi(y)$ and
$\hat\T=\hat\phi(\tilde\T)$, and for each pair of tiles sharing a common center: $\tilde T=\tilde Sc$ of $\tilde\T$ and $\hat T=\hat Sc$ of $\hat\T$, we define the restriction 
$$
x|_{\tilde T_0}\approx \theta_{\hat S,\tilde S}(\alpha),
$$ 
where $\tilde T_0 = \tilde S_0c$ (equivalently, $\tilde T_0=\tilde T_{\bar M}\setminus\bar M^2Z_0c$) and $\alpha\in\B_{Y'}(\hat S)$ is such that $\alpha\approx y'|_{\hat T}$ where $y'=\pi(y)$. 

\subsection{Marking the centers and encoding the shapes} 
In this manner we have completely encoded $y'$ (and hence also $y=\pi^{-1}(y')$) within $x$. For this coding to be invertible (that is, allowing to recover $y'$ from $x$), we need to mark in $x$ the centers and memorize the shapes of the tiles of $\T=\phi(y)$. Once this information is included in $x$, based on $x$ we will be able to recover the centers and shapes of both $\tilde\T$ and $\hat\T$, because these quasitilings are determined by $\T$. This is where the markers $\zeta_t$ come in handy. If $c$ is the center of a tile $T=S_tc$ of $\T$ with shape $S_t$ (recall that the shapes of $\TT$ are $S_1,S_2,\dots,S_{r_\eps}$), $c$ is also the center of the corresponding tiles $\tilde T=\tilde Sc$ of $\tilde\T$ and $\hat T=\hat Sc$ of $\hat\T$. In this case, we additionally define
$$
x|_{Z_0c}\approx\zeta_t.
$$
Notice that, by the definition of $\tilde T_0$, the domains $\tilde T_0$ of $\theta_{\hat S,\tilde S}(\alpha)$ and $Z_0c$ of the marker $\zeta_t$ inserted into the ``hole'' inside $\tilde T_0$ are $\bar M$-apart.
Recall also that while building the marker $\zeta_t$ we took care that all components of $\zeta_t$ are almost $\bar X$-admissible and lie $\bar M$-apart from each other. The construction of the part of $x$ defined so far (in the more complicated permutable case) is shown in Figure \ref{codingarea}.

\begin{figure}[ht]
\includegraphics[width=\columnwidth]{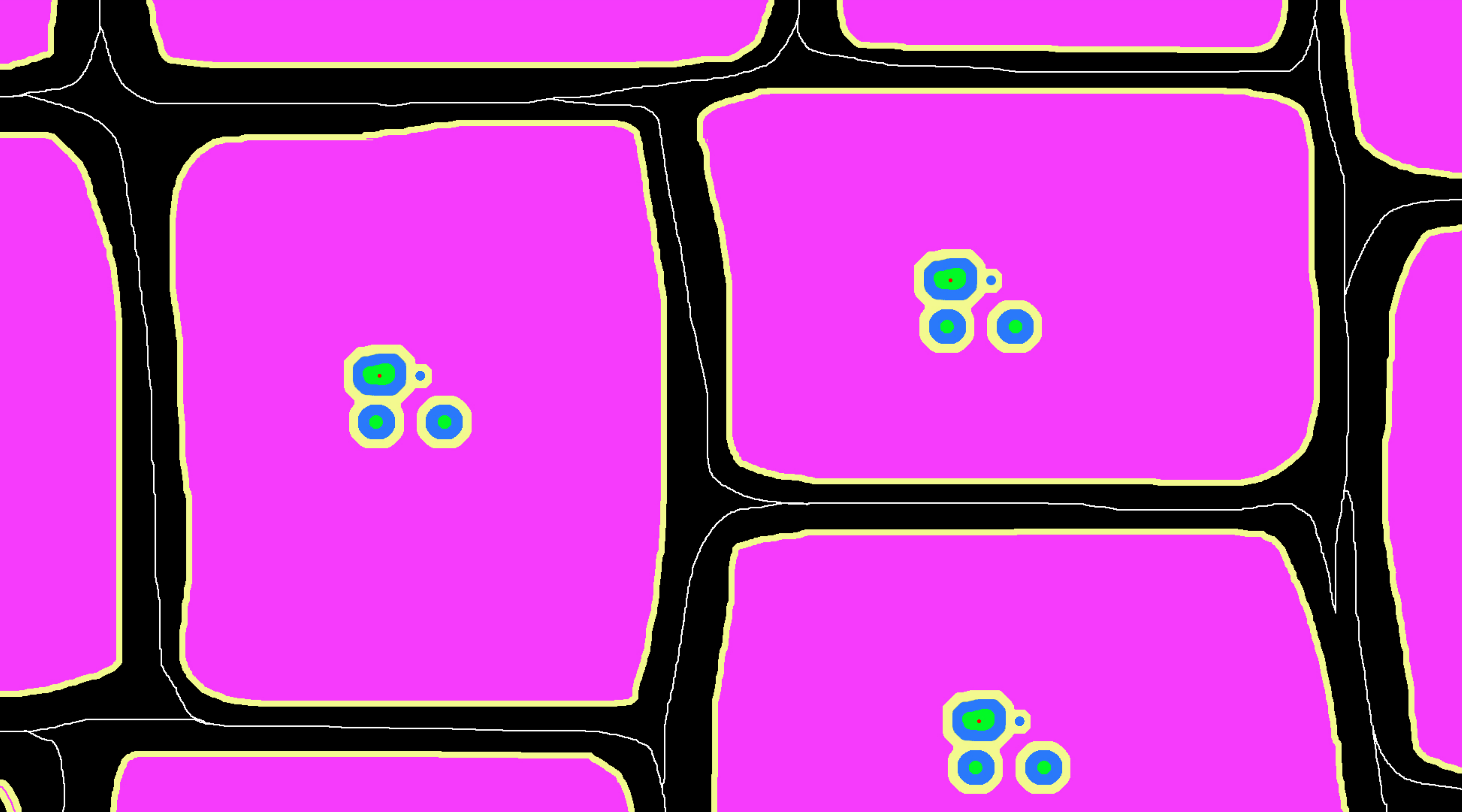}
\caption{\footnotesize The blocks $\alpha$ occurring in $y'$ over the tiles $\hat T=\hat Sc$ (framed by the thin white lines) are encoded in $x=\Psi (y)$ as $\bar X$-admissible blocks $\theta_{\hat S,\tilde S}(\alpha)$ over the sets $\tilde T_0$ (shown as violet areas). At the centers of these tiles there occur the markers $\zeta_t$, each with four components (rescaled from Figure~\ref{zetat}). All violet ``coding areas'' and all components of the markers $\zeta_t$ are almost $\bar X$-admissible and $\bar M$-apart from each other (separated by the yellow margins). The yellow margins and the black gaps between the tiles of $\tilde\T$ remain temporarily unfilled in $x$.}\label{codingarea}
\end{figure}
\smallskip

\subsection{Measurable and shift-equivariant filling of the remaining part}\label{killer}
By now, for each $y\in Y_0$, we have defined the restriction of the future image $x=\Psi(y)$ to the set
$$
\bigcup_{\tilde T\in\tilde\T}(\tilde T_0\cup Z_0c_T),
$$ 
where $\tilde\T=\tilde\phi\circ\phi(y)$, $c_T$ denotes the center of $\tilde T$
and $\tilde T_0=\tilde T_{\bar M}\setminus \bar M^2Z_0c_T$.
Let us denote the partially filled $\Psi(y)$ as $\Psi_0(y)$ (one can imagine an additional ``empty'' symbol $\square$ at all unfilled positions, so that $\Psi_0(y)$ is symbolic, with a finite alphabet). Notice that $\Psi_0(y)$ is determined on each tile of $\tilde\T$ via a finitary code\footnote{In symbolic dynamics, a finitary code is a mapping between symbolic spaces $Y$ and $X$ in which each coordinate of the image $x$ can be determined by viewing a finite window in $y$, where the size of the window depends on the coordinate. Finitary codes defined almost everywhere on $Y$ are measure-theoretic factor maps.} relying on examining $y'$, $\T$, $\tilde\T$ and $\hat\T$ (all of which are images of $y$ via measure-theoretic factor maps) in a finite window. This implies that $\Psi_0:Y_0\to(\Lambda\cup\{\square\})^G$ is measurable and shift-equivariant.
\smallskip

\noindent{\emph{Step 1}.} We will work with the permutable case, as the non-permutable case is similar (and in fact simpler). In this step, we will just fill the margins around the markers, i.e., we will define $\Psi_1(y)$ so that in addition to the symbols from $\Lambda$ already assigned by $\Psi_0$, it also assigns symbols from $\Lambda$ over the sets $\bar M^2Z_0c_T\setminus Z_0c_T$.

We temporarily fix an element $y^*\in Y_0$ and a tile $\tilde T^*$ of the quasitiling $\tilde\T^*=\tilde\phi\circ\phi(y^*)$. By shifting $y^*$ we can arrange that $\tilde T^*$ is centered at $e$. Within $\tilde T^*$ the element $\Psi_0(y^*)$ has been partly filled by five patterns: the $\bar X$-admissible ``coding pattern'' of the form $\theta_{\hat S,\tilde S}(\alpha)$, $\underline\gamma$ centered at $e$, which is almost $\bar X$-admissible with the exceptional set $M^2\Gamma$, two copies of $\underline\beta$, centered at $g_1$ and $g_2$, both being almost $\bar X$-admissible with the exceptional sets $M^2Bg_1$ and $M^2Bg_2$, respectively, and $\bar\kappa_t$ centered at $g_0$, which is $\bar X$-admissible. The domains of all these patterns are $\bar M$-apart from each other. So, by Lemma~\ref{glu}, these five patterns jointly constitute an almost $\bar X$-admissible pattern with the exceptional set 
\begin{equation}\label{Ewzor}
E=M^2(\Gamma\cup B\{g_1,g_2\})
\end{equation}
(note that the set $E$ does not depend on $y^*$ or $\tilde T^*$ and $Z=B^2E$, see~\eqref{domzeta}).
This implies that there exists an almost $\bar X$-admissible element $x_1^*\in X$ with the exceptional set $E$ that matches the five patterns over their domains. Let us denote
$$
\xi_{1,y^*}=x_1^*|_{\tilde T^*_{\bar M}}.
$$
By definition, $\xi_{1,y^*}$ is an almost $\bar X$-admissible pattern with the exceptional set~$E$. 

Now comes a crucial detail in the construction, responsible for measurability and shift-equivariance. Namely, whenever for some $y\in Y_0$, some tile $\tilde T$ of $\tilde\T=\tilde\phi\circ\phi(y)$ has the same shape as $\tilde T^*$ and satisfies $\Psi_0(y)|_{\tilde T}\approx\Psi_0(y^*)|_{\tilde T^*}$, we put
$$
\Psi_1(y)|_{\tilde T_{\bar M}}\approx\xi_{1,y^*}
$$ 
Since there are finitely many shapes of $\tilde\TT$ and finitely many possible patterns over each shape, it suffices to repeat the above procedure some finite number of times, with different choices of $y^*$ and $\tilde T^*$, until $\Psi_1(y)$ is defined for all $y\in Y_0$ on the $\bar M$-cores of all tiles $\tilde T$ of $\tilde\T=\tilde\phi\circ\phi(y)$. Outside these cores $\Psi_1(y)$ is filled with the ``empty'' symbols $\square$. This concludes the first step of the filling procedure.
It is clear that the procedure just described depends on $\Psi_0(y)$ via a bounded horizon code (and thus is a \tl\ factor map). Therefore, $\Psi_1:Y_0\to(\Lambda\cup\{\square\})^G$ is measurable and shift-equivariant. Notice that $\Psi_1$ preserves the symbols from $\Lambda$ assigned by $\Psi_0$ and just replaces the ``empty'' symbols $\square$ appearing over the margins around the markers with symbols from $\Lambda$.

\smallskip\noindent\emph{Inductive step.} We will now fill the gaps between the tiles of $\tilde\T_1$. Recall that each $y\in Y_0$ determines, via a \tl\ factor map $\tilde\Phi$, a congruent \sq\ of disjoint quasitilings $(\tilde\T_n)_{n\ge1}\in\tilde\TTT$ such that $\tilde\T_1=\tilde\T=\tilde\phi\circ\phi(y)$. Assume that for some $n\ge1$ and every $m=1,2,\dots,n$, we have defined a measurable and shift equivariant map $\Psi_m:Y_0\to(\Lambda\cup\{\square\})^G$, so that, for any $y\in Y_0$ and each tile $\tilde T$ of $\tilde\T_m$, the restriction $\Psi_m(y)|_{\tilde T_{\bar M}}$ is an almost $\bar X$-admissible pattern with the exceptional set 
$$
E_{\tilde T}= \bigcup_{c\in C(\tilde\T)\cap\tilde T} Ec, 
$$ 
($c$ ranges over the set of centers of the tiles of $\tilde\T=\tilde\phi\circ\phi(y)$ contained in $\tilde T$).

We also assume that $\Psi_m(y)$ matches $\Psi_0(y)$ at all coordinates where the latter is filled with symbols from $\Lambda$. Note that for $n=1$ these requirements are fulfilled by $\Psi_1$, so induction is correctly initiated.

To define $\Psi_{n+1}$, we temporarily fix an element $y^*\in Y_0$ such that $e$ is the center of a tile $\tilde T^*$ of the $(n+1)$th ``level'' $\tilde\T^*_{n+1}$ of $\tilde\Phi(y^*)$.

By assumption, $\Psi_n(y^*)$ is filled over the $\bar M$-core of each primary subtile (see Definition~\ref{primary}) $\Breve T^*$ of $\tilde T^*$ by an almost $\bar X$-admissible pattern with the exceptional set $E_{\Breve T^*}$.

Since the $\bar M$-cores of the primary subtiles of $\tilde T^*$ are $\bar M$-apart, the patterns that fill them constitute jointly an almost $\bar X$-admissible pattern with the exceptional set~$E_{\tilde T^*}$. This implies that there exists an almost $\bar X$-admissible element $x_{n+1}^*\in X$ with the exceptional set $E_{\tilde T^*}$ that matches $\Phi_n(y)$ at all positions filled with the symbols from $\Lambda$, contained in $\tilde T^*$.
Let us denote
$$
\xi_{n+1,y^*}=x_{n+1}^*|_{\tilde T^*_{\bar M}}.
$$
By definition, $\xi_{n+1,y^*}$ is an almost $\bar X$-admissible pattern with the exceptional set~$E_{\tilde T^*}$. 

Now, for each $y\in Y_0$, and every tile $\tilde T$ of the $(n+1)$st ``level'' $\tilde\T_{n+1}$ of $\tilde\Phi(y)$ which has the same shape as $\tilde T^*$, and such that $\Psi_n(y)|_{\tilde T}\approx\Psi_n(y^*)|_{\tilde T^*}$, we put
$$
\Psi_{n+1}(y)|_{\tilde T_{\bar M}}\approx\xi_{n+1,y^*}. 
$$ 
After a finite number of repetitions of this process with various elements $y^*$, $\Psi_{n+1}(y)$ is defined for all $y\in Y_0$ on the $\bar M$-cores of all tiles $\tilde T$ of the quasitiling $\tilde\T_{n+1}$ associated with $y$. All remaining positions in $\Psi_{n+1}(y)$ are filled with the ``empty'' symbols $\square$. The map $\Psi_{n+1}:Y_0\to(\Lambda\cup\{\square\})^G$ is measurable and shift-equivariant, and satisfies the inductive assumption for $n+1$. This concludes the induction (see Figure~\ref{final}).

\begin{figure}[h]
\includegraphics[width=\columnwidth]{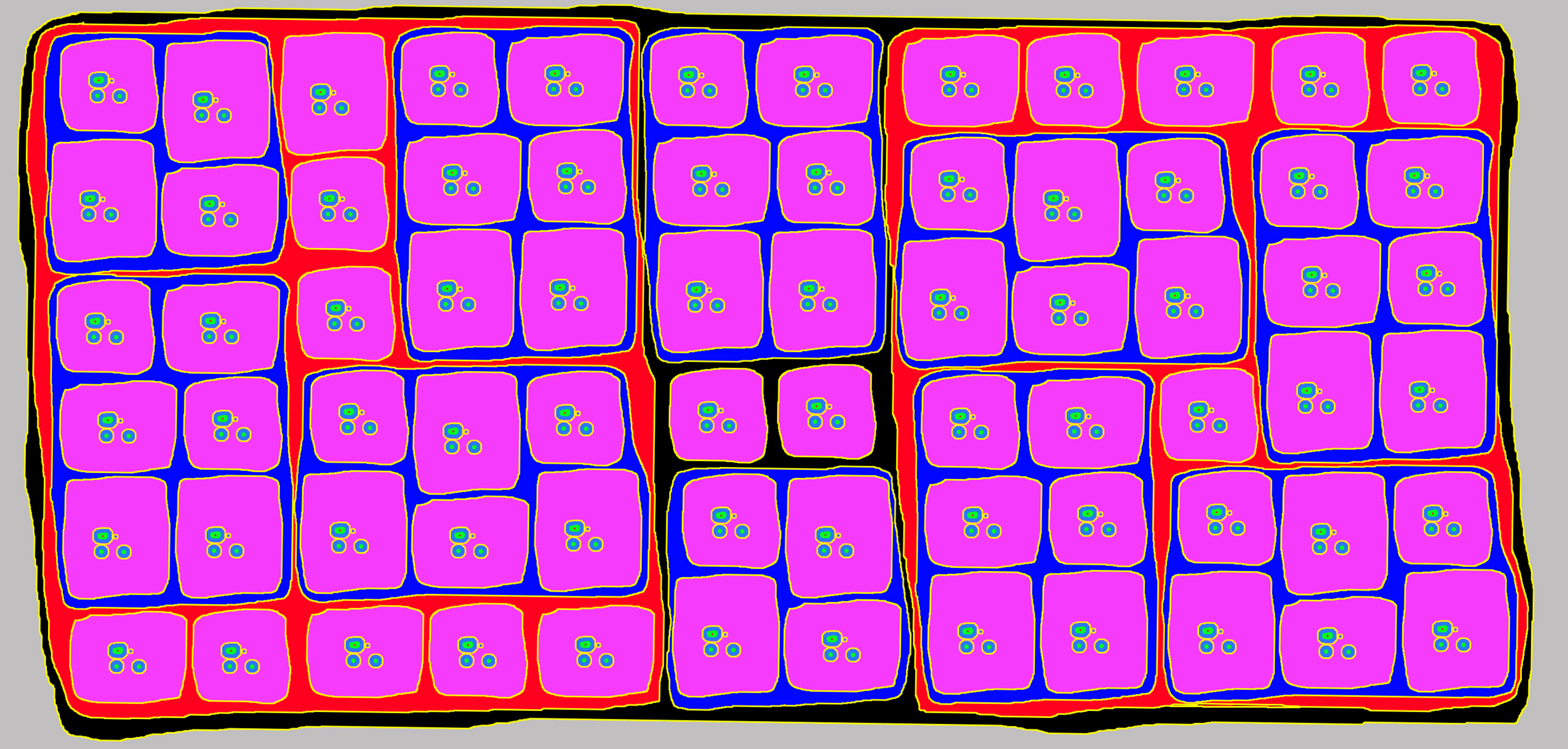}%
\caption{The figure shows a tile $\tilde T$ of $\tilde\T_4$ (black with the thin yellow margin and everything inside). After step 4, $\Psi_4(y)|_{\tilde T}$ is completely defined except over the external yellow margin. Subtiles of levels 3, 2 and 1 are shown in red, blue and violet, respectively. Those on black background (two red, two blue and two violet) are primary subtiles of $\tilde T$. The tiles of level 1 have in their centers the markers $\zeta_t$ and carry in the violet section the encoded information about the symbolic contents of $y'$. The tiny green areas inside the markers are the exceptional sets. The quasitilings of levels higher than 1 are used exclusively to make the filling procedure measurable and shift-equivariant and are not involved in the coding/decoding algorithms.}
\label{final}
\end{figure}

\subsection{The limit map $\Psi$}
Let us now restrict the domain of the maps $\Psi_n$ to $Y_0^*=Y_0\cap Y^*$ (for the meaning and properties of $Y^*$, see Corollary~\ref{cor_quasitiling}). Since $Y^*$ has the full measure for all \im s on $Y$, we have $\nu(Y_0^*)=1$. What we gain from this restriction is that for each $y\in Y_0^*$ the \sq\ of quasitilings $(\tilde\T_n)_{n\ge1}$ associated with $y$ via the map $\tilde\Phi$ is in general position, that is, the central tiles $\tilde T^{(n)}$ of $\tilde\T_n$, grow to the entire group $G$ (see Definition~\ref{gp}). Clearly, the same applies to the cores $\tilde T^{(n)}_{\bar M}$. Because, for each $n$, $\Psi_n(y)|_{\tilde T^{(n)}_{\bar M}}$ is an almost $\bar X$-admissible pattern with the exceptional set $E_{\tilde T^{(n)}}$, the images $\Psi_n(y)$ converge to an almost $\bar X$-admissible element $\Psi(y)\in X$ with the exceptional set 
\begin{equation}\label{Ey}
E_y = \bigcup_{c\in C(\tilde\T)} Ec=EC(\tilde\T).
\end{equation}
where $\tilde\T=\tilde\phi\circ\phi(y)$. Clearly, the limit map $\Psi$ defined on $Y_0^*$ is measurable and shift-equivariant. 

\subsection{Invertibility of $\Psi$}
Because, for each $y\in Y_0^*$, the image $\Psi(y)$ is an almost
$\bar X$-admissible element with the exceptional set $E_y$, Lemma~\ref{nobeta} guarantees that the basic marker $\beta$ occurs in $\Psi(y)$ centered exclusively within $BE_y$ and hence fit within the set $B^2E_y$. 
Recall, however, that 
$$
B^2E_y=B^2EC(\tilde\T)=ZC(\tilde\T)
$$ 
is occupied by copies of the unambiguous markers $\zeta$ (see \eqref{Ey}, \eqref{Ewzor}, \eqref{W6} and \eqref{domzeta}). Thus, all occurrences of $\beta$ in $\Psi(y)$ fit within the copies of the marker $\zeta$, and hence their centers constitute the set $\bigcup_{c\in C(\tilde\T)}J_0 c$ (see \eqref{jzeta2}). 
Suppose that a marker $\zeta$ occurs in $\Psi(y)$ centered at an element $g\in G$.
Then $\beta$ occurs centered at the elements of $J_0 g$. If $J_0 g=J_0 c$ for some $c\in C(\tilde\T)$ then $g=c$, by the unambiguity of the marker $\zeta$ (see Proposition~\ref{unamb}). Otherwise, $J_0 g$ would have to intersect at least two sets of the form $J_0 c_1, J_0 c_2$, $c_1,c_2\in C(\tilde\T)$.
This would imply that $J_0^{-1}J_0 c_1\cap J_0^{-1}J_0 c_2\neq\varnothing$, which is impossible by the $Z^{-1}Z$-separation of $C(\tilde\T)$ (see~\eqref{Jalfa},~\eqref{domzeta} and~\eqref{dopo}). We have shown that markers $\zeta$ occur in $\Psi(y)$ exclusively centered at the centers of the tiles of~$\tilde\T$.

To see that $\Psi$ is invertible, we need to show how $y\in Y^*_0$ can be decoded from its image $\Psi(y)$. By the discussion above, we can locate the centers of the tiles of $\tilde\T=\tilde\phi\circ\phi(y)$ (which are the same as the centers of the tiles of $\T=\phi(y)$) as the centers of the occurrences of $\zeta$ in $\Psi(y)$. 

Each marker $\zeta_t$ is made up of $\zeta$ and the ``appendage'' $\bar\kappa_t$ (where $t\in\{1,2,\dots,r_\eps\}$) centered at $g_0$. Thus, for each center $c\in C(\T)$ we have $\Psi(y)|_{Lg_0c}\approx\bar\kappa_t$, where $t$ coincides with the index of the shape $S_t$ of the tile of $\T$ centered at $c$. This enables us to recover the shape of each tile of $\T$. In other words, $\Phi(y)$ determines $\phi(y)$. Furthermore, $\phi(y)$ determines $\tilde\T=\tilde\phi\circ\phi(y)$ and $\hat\T=\hat\phi\circ\tilde\phi\circ\phi(y)$. That is, for each $c\in C(\T)$ we know the shapes $\tilde S$ and $\hat S$ of the respective tiles $\tilde T$ of $\tilde\T$ and $\hat T$ of $\hat\T$, centered at $c$. Next, for every tile $\tilde T$ of $\tilde\T$ we know that $\Phi(y)|_{\tilde T_0}\approx\theta_{\hat S,\tilde S}(\alpha)$, where $\alpha\approx y'|_{\hat T}$. Because the code $\theta_{\hat S,\tilde S}$ is injective, the block $\alpha$ is now uniquely determined. Since $\hat\T$ is a tiling (i.e., its tiles cover the whole group), $y'$ can be completely reconstructed, tile by tile (the fact that $\hat\T$ is improper is irrelevant). Finally, since $\pi$ is invertible on $Y_0$, we recover $y$ as $\pi^{-1}(y')$. This completes the proof of invertibility of $\Psi$ on $Y_0^*$. So, $\Psi$ is a (measure-theroetic) isomorhism between the systems $(Y,\nu,G)$ and $(X,\mu,G)$, where $\mu$ is the image of $\nu$ via $\Psi$.

{\bf This concludes the proof of the main theorem}.

\newpage

\begin{thebibliography}{9}
	\bibitem{E} G.\ Elek, \emph{Free minimal actions of countable groups with invariant probability measures}. Ergodic Theory Dynam. Systems 41 (2021), no. 5, pages 1369--1389.
	\bibitem{DH}
	T.\ Downarowicz, D.\ Huczek, \emph{Dynamical quasitilings of amenable groups}. Bull. Pol. Acad. Sci. Math. 66 (2018), no. 1, pages 45--55.
		\bibitem{DHZ}
	T.\ Downarowicz, D.\ Huczek, G.\ Zhang, \emph{Tilings of amenable groups}, Journal f\"ur die reine und angewandte Mathematik (Crelles Journal), {\bf 747} (2019), pages 277--298.
	\bibitem{DOWZ}
	T.\ Downarowicz, P.\ Oprocha, M.\ Wi\c{e}cek, G.\ Zhang, \emph{Multiorders in amenable group actions}, Groups, Geom. Dyn., {\bf 18} (2024), pages 25--65.
	\bibitem{DZ} T.\ Downarowicz, G.\ Zhang, \emph{Symbolic extensions of amenable group actions and the comparison property}. Mem. Amer. Math. Soc. {\bf 281} (2023), pages i-vi, 1-95.
	\bibitem{FH} B.\ Frej, D.\ Huczek, \emph{Faces of simplices of invariant measures for actions of amenable groups}. Monatsh. Math. {\bf 185} (2018), pages 61--80.
	\bibitem{HK} D.~Huczek, S.~Kopacz, \emph{Factoring strongly irreducible group shift actions onto full shifts of lower entropy}. Groups Geom. Dyn. {\bf 18} (2024), pages 1185–1199.
    \bibitem{LW} E.~Lindenstrauss, B.~Weiss, \emph{Mean topological dimension}. Israel J. Math. {\bf 115} (2000), pages 1--24.
	\bibitem{N} I.\ Namioka, \emph{F\o lner's conditions for amenable semi-groups}, Math. Scand., {\bf 15} (1964), pages 18--28.
	\bibitem{OW} D.\ S.\ Ornstein, B.\ Weiss, \emph{Entropy and isomorphism theorems for
	actions of amenable groups}, J. Analyse Math. {\bf 48} (1987), pages 1–141.
	\bibitem{R} A.\ Rosenthal, \emph{Finite uniform generators for ergodic, finite entropy, free actions of amenable groups}. Probab. Theory Related Fields {\bf 77} (1988), pages 147--166. 
    \bibitem{W} B.\ Weiss, \emph{On the Jewett-Krieger theorem for amenable groups}. Bull. Pol. Acad. Sci. Math. {to appear}. 
\end{thebibliography}
\end{document}